\newtheorem{theorem}{Theorem}[section]
\theoremstyle{definition}
\newtheorem{example}[theorem]{Example}
\theoremstyle{remark}
\newtheorem{Definition}{\bf Definition}[section]
\newtheorem{Thm}[Definition]{\bf Theorem}
\newtheorem{Lem}[Definition]{\bf Lemma}
\newtheorem{Prop}[Definition]{\bf Proposition}
\newtheorem{Example}[Definition]{\bf Example}
\newtheorem{Note}[Definition]{\bf Note}
\numberwithin{equation}{section}
\newcommand{\nn}{\nonumber}
\newcommand{\no}{\noindent}
\newcommand{\realpart}{\mathop{\rm Re}\nolimits}
\newcommand{\ba}{\begin{eqnarray}}
\newcommand{\ea}{\end{eqnarray}}
\newcommand{\ione}{\int_{0}^{1}}
\newcommand{\allR}{\mathbb{R}}
\newcommand{\allC}{\mathbb{C}}
\newcommand{\allN}{\mathbb{N}}
\newcommand{\Nzero}{\mathbb{N}\cup\{0\}}
\newcommand{\stoli}{}
\newcommand{\barzeta}{{\bar\zeta}}
\newcommand{\eps}{\varepsilon}
\DeclareMathOperator{\Cl}{Cl}
\begin{document}

\title[The evaluation of Tornheim double sums. Part 2] {The evaluation of
Tornheim double sums. Part 2}

\author{Olivier Espinosa}
\address{Departamento de F{\'\i}sica,
Universidad T{\'{e}}c. Federico Santa Mar{\'\i}a, Valpara{\'\i}so, Chile}
\email{olivier.espinosa@usm.cl}

\author{Victor H. Moll}
\address{Department of Mathematics,
Tulane University, New Orleans, LA 70118}
\email{vhm@math.tulane.edu}

\subjclass{Primary 33}

\date{\today}

\keywords{Hurwitz zeta function, Tornheim sum, Witten zeta function}

\begin{abstract}
We provide an explicit formula for the 
Tornheim double series $T(a,0,c)$ in terms of
an integral involving the Hurwitz zeta function. For 
integer values of the parameters,
$a=m, \; c=n$, we show that in the most interesting 
case of {\em even} weight $N:=m+n$
the Tornheim
sum $T(m,0,n)$ can be expressed in terms of zeta values 
and the family of integrals
\begin{equation}
\ione  \log \Gamma(q) B_{k}(q) \Cl_{l+1} (2 \pi q)  \, dq,
\notag
\end{equation}
with $k+l = N$, where $B_{k}(q)$ is a 
Bernoulli polynomial and $\Cl_{l+1} (x)$ is
a Clausen function.
\end{abstract}

\maketitle

\section{Introduction} \label{S:intro}

The function
\begin{equation}
T(a,b,c)  : =  \sum_{r=1}^{\infty} \sum_{s=1}^{\infty} \frac{1}{r^{a}
\, s^{b} \, (r+s)^{c} },
\label{seriestz}
\end{equation}
\no
was introduced by Tornheim in \cite{tornheim1}.  For $a, \, b, \, c \in 
\mathbb{R}$, the series is convergent if 
\begin{equation}
a+ c > 1, \, b+c > 1, \mbox{ and } a+b+c > 2.
\label{cond-conv}
\end{equation}
\noindent

\medskip
In the case $(a,b,c)=(m,k,n)$, with 
$m, \, k, \, n \in \mathbb{N} \cup \{ 0 \}$, we define
the {\em weight} of the Tornheim sum  
$T(m,k,n)$ as the positive integer $N = m+k+n$.

We have previously derived \cite{espmoll3} an analytic expression for
the general Tornheim sum $T(a,b,c)$ in terms of integrals involving the Hurwitz
zeta function $\zeta(z,q)$, defined as the meromorphic extension to the whole
complex $z$-plane of the series
\begin{equation}
\zeta(z,q) := \sum_{n=0}^{\infty} \frac{1}{(n+q)^{z}}, 
\label{zeta-def}
\end{equation}
\noindent
which is defined for $\realpart{z} > 1$ and $ q \neq 0, \, -1, \, -2, \cdots$.
Our expression for $T(a,b,c)$, recalled later in Theorem \ref{main Thm part I},
is valid for $a, \, b, \, c \in\allR - \Nzero$, provided the
convergence conditions \eqref{cond-conv} are satisfied. Using this result 
and a limiting procedure, we derived similar formulas for $T(m,k,n)$, with
$m, \, k, \, n \in \mathbb{N}$.

\medskip

In this paper we derive a formula for the Tornheim sum $T(a,0,c)$, valid for
$a, \, c \in\allR - \allN$ with $a>2$ and $c>2$. A limiting procedure will 
then provide an
analytic expression for the sums $T(m,0,n)$, with $m, \, n \in\allN-\{1\}$.

\medskip

These expressions for the sums $T(m,0,n)$ are of practical interest. In fact, 
Huard et al \cite{huard}  established the relation
\begin{equation}
T(m,k,n) = \sum_{i=1}^{m} \binom{m+k-i-1}{m-i} T(i,0,N-i) + 
\sum_{i=1}^{k} \binom{m+k-i-1}{k-i} T(i,0,N-i),
\label{huard-1}
\end{equation}
\noindent
for the Tornheim sum of weight $N = m+k+n$. Therefore, it suffices to consider
sums of the form $T(m,0,n)$. The convergence of the series requires 
$n>1$ and $m+n > 2$, thus the sum $T(m,0,1)$ diverges.

\medskip

Introduce the spaces
\begin{multline}
\mathcal{Z}_{N} := 
\{ T(m,k,n): \, m, \, k, \, n \in \mathbb{N} \cup \{ 0 \} 
\text{ with } n + m \ge 2, \, k + n \ge 2 \\
\mbox{ and } N = m+k+n \ge 3 \},
\end{multline}
\noindent
and
\begin{equation}
\mathcal{Z}_{N}^{0} := 
\{ T(m,0,n) \in \mathcal{Z}_{N} \}.
\end{equation}

The following result is contained in formula (\ref{huard-1}):

\begin{Prop}
\label{prop-hu1}
Every sum in $\mathcal{Z}_{N}$ is a linear combination of terms in 
$\mathcal{Z}_{N}^{0}$ with coefficients in $\mathbb{N}$.
\end{Prop}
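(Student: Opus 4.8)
The plan is to read the statement directly off the Huard et al.\ identity \eqref{huard-1}. Given an arbitrary element $T(m,k,n)\in\mathcal{Z}_N$, that identity already exhibits it as
\[
T(m,k,n)=\sum_{i=1}^{m}\binom{m+k-i-1}{m-i}T(i,0,N-i)+\sum_{i=1}^{k}\binom{m+k-i-1}{k-i}T(i,0,N-i),
\]
so the entire task reduces to two verifications: that the coefficients are natural numbers, and that every summand $T(i,0,N-i)$ on the right is a genuine member of $\mathcal{Z}_N^{0}$.

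The first verification is immediate: each coefficient is a binomial coefficient with non-negative integer arguments, hence a non-negative integer. Collecting the (at most two) contributions to each distinct $T(i,0,N-i)$ and discarding any term whose coefficient is zero yields a combination with coefficients in $\mathbb{N}$.

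The second verification is where the actual bookkeeping lies, and I would carry it out by checking the defining conditions of $\mathcal{Z}_N^{0}$ for the parameter triple $(i,0,N-i)$. The weight condition $i+(N-i)=N\ge 3$ is inherited directly from $T(m,k,n)\in\mathcal{Z}_N$, and the lower bound $i\ge 1$ holds because both summations begin at $i=1$. The one condition that must be controlled is $N-i\ge 2$, which by \eqref{cond-conv} is precisely what makes $T(i,0,N-i)$ converge. In the first sum the index satisfies $i\le m$, so $N-i\ge N-m=k+n\ge 2$; in the second sum $i\le k$, so $N-i\ge N-k=m+n\ge 2$. Each bound is exactly one of the defining inequalities $k+n\ge 2$ and $m+n\ge 2$ of $\mathcal{Z}_N$.

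I expect the only real subtlety to be this last match: the two membership constraints imposed on $\mathcal{Z}_N$ are calibrated so that the indices generated by \eqref{huard-1} never leave the convergent range recorded in \eqref{cond-conv}, and verifying that the ranges line up is the heart of the argument. Once that is confirmed, assembling the coefficient of each $T(i,0,N-i)$ completes the proof.
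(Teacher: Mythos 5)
Your proposal is correct and follows exactly the paper's route: the paper simply states that the proposition is contained in Huard's identity \eqref{huard-1}, and your verifications (non-negativity of the binomial coefficients and the check that $N-i\ge k+n\ge 2$ resp.\ $N-i\ge m+n\ge 2$, so each $T(i,0,N-i)$ lies in $\mathcal{Z}_N^0$) are precisely the routine details the paper leaves implicit.
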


\begin{Note}
The sums $T(m,0,n)$ appearing in (\ref{huard-1}) can be written as
\begin{eqnarray}
T(m,0,n)  & = &  \sum_{r=1}^{\infty} \sum_{s=1}^{\infty} \frac{1}{r^{m} 
\, (r+s)^{n} } \label{tmidd0} \\
& = & \sum_{r_{1} > r_{2}} 
\frac{1}{r_{1}^{n} \, r_{2}^{m}}. 
\nonumber
\end{eqnarray}
\noindent
Therefore $T(m,0,n)$ is a special case of the multiple zeta value (= MZV)
\begin{equation}
\zeta(s_{1},s_{2},\ldots,s_{k}) = \sum_{r_{1}>r_{2} > \cdots > r_{k} > 0 }
\prod_{j=1}^{k} r_{j}^{-s_{j}}, 
\label{def-MZV}
\end{equation}
\noindent
namely\footnote{The double zeta function appearing here should not be 
confused with the Hurwitz zeta function in (\ref{zeta-def}).}, 
\begin{equation}
T(m,0,n) = \zeta(n,m).
\end{equation}
\noindent
There is a vast literature on MZV and the reader if referred to Chapter 3 of 
\cite{borw2} for an introduction to this topic.
\end{Note}

\begin{Note}
In the case of {\em odd} weight, \cite{huard} gives the relation
\begin{multline}
T(m,0,n) =  (-1)^{m} \sum_{j=0}^{\lfloor{ \frac{n-1}{2} \rfloor} } 
\binom{m+n-2j-1}{m-1} \zeta(2j) \zeta(m+n-2j) 
\label{huard-2} \\
 +  (-1)^{m} \sum_{j=0}^{\lfloor{ \frac{m}{2} \rfloor} } 
\binom{m+n-2j-1}{n-1} \zeta(2j) \zeta(m+n-2j) -\tfrac{1}{2}\zeta(m+n),
\end{multline}
valid for $m\ge 1$. Here 
\begin{equation}
\zeta(s) = \sum_{n=1}^{\infty} \frac{1}{n^{s}}, 
\end{equation}
\noindent
is the classical Riemann zeta function. This function has an analytic 
extension to $\mathbb{C}- \{ 1 \}$, the point $s=1$ being a simple pole. 
Recalling that $\zeta(0)=-1/2$, the previous result can also be restated
in the following terms:
\end{Note}

\begin{Prop}
\label{prop-hu2}
Assume the weight $N$ is odd. Then the sums in $\mathcal{Z}_{N}^{0}$  can be 
evaluated as linear combination of the products $\zeta(2j) \zeta(N-2j)$,
$j=\max \left\{ \lfloor \frac{m}{2} \rfloor, \lfloor \frac{n-1}{2} 
\rfloor \right\} $,
with integer coefficients.
\end{Prop}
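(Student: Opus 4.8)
The plan is to derive the statement directly from the Huard et al.\ relation \eqref{huard-2}, the only substantive idea being the reinterpretation of its isolated term through the special value $\zeta(0)=-\tfrac{1}{2}$. Since a sum $T(m,0,n)\in\mathcal{Z}_{N}^{0}$ of odd weight satisfies $m+n=N$ with $N$ odd, in each product $\zeta(2j)\zeta(N-2j)$ the argument $N-2j$ is odd while $2j$ is even, so one factor is an even zeta value and the other an odd one; this parity split is exactly what \eqref{huard-2} naturally produces.

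First I would take \eqref{huard-2} (valid for $m\ge 1$) and rewrite the standalone summand $-\tfrac{1}{2}\zeta(m+n)$ as $\zeta(0)\,\zeta(m+n)$. This is the one genuinely non-routine step. Read literally, \eqref{huard-2} presents $T(m,0,n)$ as a combination of products $\zeta(2j)\zeta(N-2j)$ plus a leftover $-\tfrac{1}{2}\zeta(N)$ whose coefficient is the non-integer $-\tfrac{1}{2}$; identifying this leftover with the $j=0$ product $\zeta(0)\zeta(N)$ is precisely what restores integrality of all coefficients and is the real content of the restatement.

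Next I would merge the two finite sums of \eqref{huard-2} into a single sum over the products $\zeta(2j)\zeta(N-2j)$. The first sum contributes, for $0\le j\le\lfloor\tfrac{n-1}{2}\rfloor$, the coefficient $(-1)^{m}\binom{m+n-2j-1}{m-1}$; the second contributes, for $0\le j\le\lfloor\tfrac{m}{2}\rfloor$, the coefficient $(-1)^{m}\binom{m+n-2j-1}{n-1}$; and the reinterpreted term adds $+1$ to the coefficient of the $j=0$ product. Setting to zero any coefficient whose index lies beyond its own range, the effective upper limit of the combined sum is $\max\{\lfloor\tfrac{m}{2}\rfloor,\lfloor\tfrac{n-1}{2}\rfloor\}$, exactly the index in the statement, and one checks that $N-2j\ge 2$ throughout so that no divergent $\zeta(1)$ appears. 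Each resulting coefficient is a signed sum of binomial coefficients (plus possibly $1$), hence an integer, which proves the claim for $m\ge 1$. I expect the only required care to be the bookkeeping that matches the two summation ranges; the boundary value $m=0$, giving $T(0,0,n)=\zeta(n-1)-\zeta(n)$, does not arise from \eqref{huard-2} and would be handled separately should it be retained in $\mathcal{Z}_{N}^{0}$.
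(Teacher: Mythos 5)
Your proposal is correct and follows essentially the same route as the paper: the paper itself obtains Proposition \ref{prop-hu2} simply by restating \eqref{huard-2} after recalling $\zeta(0)=-\tfrac{1}{2}$, which is exactly your reinterpretation of the term $-\tfrac{1}{2}\zeta(N)$ as the $j=0$ product $\zeta(0)\zeta(N)$. Your extra bookkeeping (merging the two sums, checking integrality and the upper limit $\max\{\lfloor m/2\rfloor,\lfloor (n-1)/2\rfloor\}$) and your flagging of the exceptional case $T(0,0,N)=\zeta(N-1)-\zeta(N)$, which the paper also treats as special elsewhere, only make the argument more complete.
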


The idea of \cite{huard} is to produce a linear system of  equations for the 
unknowns $T_{i} := T(i,0,N-i)$, for $1 \leq i \leq N-2$. This system has full 
rank in the case $N$ odd and its solution yields (\ref{huard-2}).  Their 
methods also produce analytic expressions for Tornheim sums 
of  small {\em even} weight, but they fail in general. In particular, for 
weight $N \geq 8$, the system of equation mentioned above is not of full rank.
Section \ref{S:examples} contains a discussion of this issue.

\medskip

For example, the class
$\mathcal{Z}_{4}^{0}$ contains the sums
\begin{equation}
T(1,0,3) = \frac{1}{4}\zeta(4)\qquad \text{and}\qquad T(2,0,2) = \frac{3}{4} \zeta(4), 
\label{huard-4}
\end{equation}
\noindent 
and in the space $\mathcal{Z}_{6}^{0}$ we find the four sums 
\begin{align}\label{huard-6}
T(1,0,5) & =  - \frac{1}{2}\zeta(3)^{2} + \frac{3}{4} \zeta(6), \cr
T(2,0,4) & =  \zeta^{2}(3) - \frac{4}{3} \zeta(6), \cr
T(3,0,3) & =  \frac{1}{2}\zeta(3)^{2} - \frac{1}{2}\zeta(6), \cr
T(4,0,2) & = -\zeta^{2}(3) + \frac{25}{12} \zeta(6).
\end{align}

Huard et al. \cite{huard} also gave the relation 
\begin{equation}
\label{huard-7}
5T(2,0,6) + 2 T(3,0,5) = 10 \zeta(3) \zeta(5) - \frac{49}{4} \zeta(8),
\end{equation}
\noindent
for the case of weight $8$, but they are unable to 
evaluate the individual terms $T(2,0,6)$ and $T(3,0,5)$.  The question of 
an analytic expression these sums  remains open. 

\medskip

In this paper we give particular consideration to the Tornheim sums
$T(m,0,n)$ of arbitrary even weight $N = m+n$. For each even $N$, only 
two of these sums have known closed
expressions in terms of zeta values, namely $T(1,0,N-1)$ and $T(N/2,0,N/2)$.

Tornheim established the result
\begin{equation}
T(0,0,N) = \zeta(N-1)-\zeta(N),\quad N\ge 3,
\label{torn-two-0}
\end{equation}
which appears as Theorem 5, page 308 of Tornheim \cite{tornheim1}, 
\noindent
and the companion formula
\begin{equation}
T(1,0,N-1) = \frac{1}{2} \left[ (N-1) \zeta(N) - 
\sum_{i=2}^{N-2} \zeta(i) \zeta(N-i) \right], \quad N\ge 3,\label{tor-new3}
\end{equation}
\noindent
where $N=n+1$, can also be found in \cite{tornheim1}.

\medskip

The main result of this paper is an analytic expression for the 
Tornheim sums
$T(m,0,n)$ of {\em even} weight, with $m, \, n \geq 2$, in terms of a family 
of integrals involving
the {\em log-gamma} function $\log\Gamma(q)$, the 
{\em Bernoulli polynomials} $B_{k}(q)$, 
given by the generating function
\begin{equation}
\frac{e^{qt}}{e^{t}-1} = \sum_{k=0}^{\infty} B_{k}(q) \frac{t^{k-1}}{k!},
\label{B-def}
\end{equation}
and the {\em Clausen functions}  $\Cl_{l}(x)$, defined as
\begin{align}
\Cl_{2n} (x) & := 
\sum_{k=1}^{\infty}\frac{\sin kx}{k^{2n}},\quad n\in\allN, \label{def-clausen-even} \\
\Cl_{2n+1} (x) & :=
\sum_{k=1}^{\infty}\frac{\cos kx}{k^{2n+1}},\quad n\in\Nzero. \label{def-clausen-odd}
\end{align}

\medskip

\noindent
For example, we obtain 
\begin{equation}
T(6,0,2) = \tfrac{7}{6} \zeta(8)
- 6\zeta(3)\zeta(5) - Y_{2,6}^{*},
\label{nice-1}
\end{equation}
with
\begin{equation*}
Y_{2,6}^{*} := 
\frac{8}{3}\pi^{6} \left( X_{0,6} - 2 X_{1,5} + X_{2,4} \right) - 
6 \zeta(7) \log 2 \pi,
\end{equation*}
and where
\begin{equation}
X_{k,l} := (-1)^{\lfloor{l/2 \rfloor}} 
\frac{l!}{(2\pi)^l}\ione  \log \Gamma(q) B_{k}(q) 
\Cl_{l+1} (2 \pi q)  \, dq. 
\label{def-X1}
\end{equation}

%

\medskip

In the general case, we show that all the Tornheim sums of 
even weight $N$ can be expressed in terms of values of the Riemann zeta function
and integrals of the form
\noindent
\begin{multline}\label{def-Y-new}
Y_{m,N-m}^{*} := \frac{2(2\pi)^{N-2}}{m!(N-m-2)!} 
\sum_{j=0}^{m} (-1)^{j} \binom{m}{j} X_{j,N-2-j} \cr
+ (-1)^{\tfrac{N}{2} -1} \binom{N-2}{m-1} \zeta(N-1) \log 2 \pi,
\end{multline}
where $N$ is the weight and $m$ is even in the range 
$2 \leq m \leq 2 \left\lfloor \frac{N-2}{6} \right\rfloor$.

\medskip

The rest of the paper is organized as follows. Section \ref{S:main results}
contains all the main theorems we will prove in the subsequent sections.
Section \ref{S:T(a,0,c)} derives the expression for the Tornheim sum $T(a,0,c)$
in terms of an integral involving the Hurwitz zeta function, starting from a
more general result derived in \cite{espmoll3}.
Section \ref{S:T(m,0,n)} computes the limit of $T(a,0,c)$ as $a\to m$ and $c\to n$,
with $m,n\in\allN-\{1\}$. In Section \ref{S:N even - Clausen} we consider
the particular case of Tornheim sums of even weight $N$ and show that they
can be expressed in terms of zeta values and the family of integrals $X_{k,l}$,
defined above, with $k+l = N$. The explicit evaluation of this last family of
definite integrals remains a challenging problem. 
Finally, in Section \ref{S:examples} we give a systematic list of evaluations
for small even  weight.

\bigskip

\section{Main results} \label{S:main results}

We start by introducing some auxiliary special functions that play a role 
in our derivations. First, we have the {\em Bernoulli functions} 
\begin{equation}
A_{k}(q) = k \zeta'(1-k,q), \quad k\in\allN, \label{A-def}
\end{equation}
introduced in \cite{espmoll2,espmoll4}, and the kernel
\begin{equation}
K(q) = \log \sin \pi q \label{K-def}.
\end{equation}
\noindent

The first result is established in Section \ref{S:T(a,0,c)}.

\begin{Thm}
\label{Thm:T(a,0,c)}
Let $a,c \in \mathbb{R} - \mathbb{N}$ with $a, \, c > 2$. Then
\begin{multline}\label{T-explicit1}
T(a,0,c) =  4 \lambda(a) \lambda(c) \sin\left(\frac{\pi c}{2}\right)
\\
\times\Bigg[
\sin\left(\frac{\pi a}{2}\right)
\left\{
\zeta(1-a)\zeta(1-c)
-\frac{\zeta(1-a-c)B(a,c)}{1-\tan\left(\frac{\pi a}{2}\right)\tan\left(\frac{\pi c}{2}\right)}
\right\}
\\
- \frac{1}{2}\cos\left(\frac{\pi a}{2}\right)
\int_0^1\left[\zeta(1-a,q)-\zeta(1-a,1-q)\right]\zeta(1-c,q)\cot\pi q\,dq
\Bigg],
\end{multline}
\noindent
where
\begin{equation}
\lambda(z) := \frac{\Gamma(1-z)}{(2 \pi)^{1-z}} =
\frac{\pi}{(2 \pi)^{1-z} \, \Gamma(z) \, \sin \pi z}.
\label{def-lambda}
\end{equation}
\end{Thm}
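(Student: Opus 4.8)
The plan is to start from the general representation for $T(a,b,c)$ that the authors established in their earlier work \cite{espmoll3} (referred to here as Theorem \ref{main Thm part I}) and specialize it to the case $b=0$. That general formula expresses the Tornheim sum as integrals involving the Hurwitz zeta function $\zeta(z,q)$; setting $b=0$ should collapse one of the Hurwitz factors to an elementary object and reduce the complexity of the integrand substantially. First I would write out the $b \to 0$ specialization carefully, tracking how the prefactors built from $\Gamma$-functions and trigonometric terms simplify. The function $\lambda(z)$ defined in \eqref{def-lambda} is precisely the factor that arises from the functional equation of the Riemann/Hurwitz zeta function, so I expect the $\lambda(a)\lambda(c)$ prefactor and the $\sin(\pi c/2)$, $\sin(\pi a/2)$, $\cos(\pi a/2)$ terms to emerge naturally from the reflection formulas.

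The key structural step is to recognize the three pieces on the right-hand side of \eqref{T-explicit1}. The term $\zeta(1-a)\zeta(1-c)$ is a boundary or ``diagonal'' contribution, the term involving $\zeta(1-a-c)B(a,c)$ (with $B$ the Beta function) comes from a convolution that can be evaluated in closed form, and the remaining integral $\int_0^1[\zeta(1-a,q)-\zeta(1-a,1-q)]\zeta(1-c,q)\cot\pi q\,dq$ is the genuinely non-elementary remainder. I would use the Hurwitz functional equation to rewrite the original Hurwitz-zeta integrand from \cite{espmoll3} in terms of $\zeta(1-a,q)$ and $\zeta(1-c,q)$, then split the integrand into its symmetric and antisymmetric parts under $q \mapsto 1-q$. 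The kernel $\cot\pi q$ is antisymmetric under this reflection, which is why only the antisymmetric combination $\zeta(1-a,q)-\zeta(1-a,1-q)$ survives against it, while the symmetric part pairs off to produce the closed-form $\zeta(1-a)\zeta(1-c)$ and $\zeta(1-a-c)B(a,c)$ contributions.

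To pin down those closed-form contributions I would invoke the standard integral evaluations of products of Hurwitz zeta functions over $[0,1]$ — in particular the fact that $\int_0^1 \zeta(s,q)\zeta(t,q)\,dq$ and the related moments reduce to Beta-function and zeta-value combinations — together with the multiplication/reflection identities for $\zeta(z,q)$. The denominator $1-\tan(\pi a/2)\tan(\pi c/2)$ in the closed-form term is a telltale sign that trigonometric prefactors from two separate functional-equation applications are being combined over a common factor; I would verify this by expanding the products of sines and cosines and collecting terms. Throughout, I must keep the hypotheses $a,c \in \mathbb{R}-\mathbb{N}$ and $a,c>2$ active, since these guarantee both the convergence of the defining double series (via \eqref{cond-conv}) and the validity of the functional-equation manipulations, and they keep $\lambda(a)$, $\lambda(c)$ finite by avoiding the poles of $\Gamma(1-z)$ at positive integers.

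The main obstacle I anticipate is the bookkeeping of the trigonometric prefactors and the rigorous justification of interchanging summation and integration when passing from the series form of the Hurwitz zeta functions to the integral representation. The reflection-symmetry splitting is conceptually clean, but assembling the three terms with exactly the right signs and verifying that the symmetric part genuinely integrates to the stated closed form — rather than leaving a residual integral — requires care, especially near $q=0$ and $q=1$ where $\cot\pi q$ is singular and the Hurwitz functions must conspire to keep the integrand integrable. I would therefore devote the most attention to controlling these endpoint contributions and to confirming that the antisymmetric combination vanishes to sufficient order at the singular points of the kernel.
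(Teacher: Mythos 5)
Your proposal founders at its very first step. The claim that ``setting $b=0$ should collapse one of the Hurwitz factors to an elementary object'' is false: in the representation of Theorem \ref{main Thm part I} the parameter $b$ enters through the factor $\zeta(1-b,q)$ in the integrals $I$ and $J$, and at $b=0$ this is $\zeta(1,q)$, the \emph{pole} of the Hurwitz zeta function. (You may be thinking of $\zeta(0,q)=\tfrac{1}{2}-q$, which is the case $b=1$, not $b=0$.) Since $\lambda(b)\to\lambda(0)=\tfrac{1}{2\pi}$ is finite, nothing in the prefactor compensates this divergence, so the formula (\ref{T-explicit}) has no meaning at $b=0$ and cannot be ``specialized''; indeed the paper states that (\ref{T-explicit}) is valid only for $a,b,c\in\allR-\Nzero$. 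The entire content of the paper's proof is the limiting procedure you skip: write $b=\varepsilon$, use $\barzeta(\varepsilon,q)=-1/\varepsilon-\psi(q)+O(\varepsilon)$, and show that the $1/\varepsilon$ singularities cancel among the six integrals making up $Q(a,\varepsilon,c)$ (the identity $M_{+}=M_{-}$, proved via symmetry relations such as $J(z,z',2)=I(z,z',2)$ in Lemma \ref{lemma6}). Moreover, even this Laurent expansion cannot be integrated termwise, because it is not uniform in $q$: the difference $\barzeta(\varepsilon,q)+1/\varepsilon+\psi(q)$ blows up as $q\to 0$ for fixed $\varepsilon>0$. The paper must first integrate by parts twice (Lemmas \ref{recursion1} and \ref{exp-corr}) to trade $\barzeta(\varepsilon,q)$ for $\barzeta(2+\varepsilon,q)$, for which the expansion is uniform. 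Your proposal contains no substitute for either of these steps.

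Your mechanism for producing the $\cot\pi q$ integral is also not viable. The integrals $I(a,b,c)$ and $J(a,b,c)$ of Theorem \ref{main Thm part I} carry no trigonometric kernel at all, so there is nothing antisymmetric for the odd part of the integrand to ``survive against''; a parity splitting of the (nonexistent) $b=0$ integrand cannot generate it. In the actual proof the kernel appears only at order $\varepsilon$: the finite part of the limit involves $\zeta'(0,q)+\zeta'(0,1-q)=-\log(2\sin\pi q)$, coming from the $\log\Gamma(q)$ term in the expansion of the Hurwitz zeta function near its pole, and one further integration by parts of $\log\sin\pi q$ produces $\cot\pi q$. The one ingredient you identify correctly is the source of the closed-form terms: the integrals $\int_0^1\zeta(1-a,q)\zeta(1-c,q)\,dq$ and $\int_0^1\zeta(1-a,q)\zeta(1-c,1-q)\,dq$ evaluated in \cite{espmoll1} do yield the $\zeta(1-a)\zeta(1-c)$ and $\zeta(1-a-c)B(a,c)$ contributions (this is the paper's computation of $M_{+}$), but that step presupposes the regularization machinery above, which is the heart of the proof and is absent from your plan.
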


\medskip

The explicit expression \eqref{T-explicit1} for 
$T(a,0,c)$ allows us to consider the
limit $a\to m, c\to n$, for $m,n\in\allN-\{1\}$.
The value of $T(m,0,n)$ is found to be given
in terms of the {\em basic integrals}:
\begin{align}
I_{BB}(k,l)  & : =  \int_{0}^{1} B_{k}(q) B_{l}(q) K(q) \, dq,
\label{basic} \\
I_{AB}(k,l)  & :=  \frac{1}{\pi}\int_{0}^{1} A_{k}(q) B_{l}(q) K(q) \, dq,
\nonumber   \\
I_{AA}(k,l) &  :=  \frac{1}{\pi^2}\int_{0}^{1} A_{k}(q) A_{l}(q) K(q) \, dq,
\nonumber  \\
J_{AA}(k,l) &  :=  \frac{1}{\pi^2}\int_{0}^{1} A_{k}(q) A_{l}(1-q) K(q) \, dq.
\nonumber 
\end{align}
where $B_k(q)$ is a Bernoulli polynomial and $A_k(q)$ and $K(q)$ are the functions
introduced in \eqref{A-def} and \eqref{K-def}, respectively.
\begin{Thm}
\label{Thm:T(m,0,n)}
Let $m\ge 2, \, n\ge 2 \in \mathbb{N}$. The Tornheim sum $T(m,0,n)$ is given 
by
\begin{equation}\label{T(m,0,n)-explicit-1}
T(m,0,n) = \zeta(m) \zeta(n) - \tfrac{1}{2} \zeta(m+n) +
(-1)^{\lfloor{\frac{m+n}{2} \rfloor}}\frac{(2\pi)^{m+n-1}}{m!n!}\ell_2(m,n),
\end{equation}
\noindent
where:
\begin{align}
\intertext{(a) $m$ and $n$ even:}
\ell_2 (m,n) &= m I_{AB}(m-1,n) + n I_{AB}(m,n-1),
\label{ell2 m even n even}
\\
\intertext{(b) $m$ and $n$ odd:}
\ell_2(m,n) &=  m I_{AB}(n,m-1) + n I_{AB}(n-1,m).
\label{ell2 m odd n odd}
\\
\intertext{(c) $m$ odd and $n$ even:}
\ell_2(m,n) &= \frac{1}{2} \left( m I_{BB}(m-1,n) + n I_{BB}(m,n-1)\right),
\label{ell2 m odd n even}
\\
\intertext{(d) $m$ even and $n$ odd:}
\ell_2(m,n) &= - m I_{AA}(m-1,n) - n I_{AA}(m, n-1)
\label{ell2 m even n odd}\\
\nn
&\phantom{=} - m J_{AA}(m-1,n)+ n J_{AA}(m,n-1), 
\end{align}    
\end{Thm}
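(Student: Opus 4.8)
The plan is to obtain $T(m,0,n)$ as the limit of the explicit formula \eqref{T-explicit1} as $a\to m$ and $c\to n$ with $m,n\ge 2$. I would first split the right-hand side of \eqref{T-explicit1} into its two structurally different pieces: the \emph{diagonal} part
\[
P(a,c):=\zeta(1-a)\zeta(1-c)-\frac{\zeta(1-a-c)B(a,c)}{1-\tan(\pi a/2)\tan(\pi c/2)},
\]
carried by the factor $\sin(\pi a/2)$, and the \emph{integral} part $Q(a,c)$, carried by the factor $-\tfrac12\cos(\pi a/2)$. The diagonal part needs no delicate limit. From \eqref{def-lambda} and the functional equation one has the identity $\lambda(z)\sin(\pi z/2)\zeta(1-z)=\tfrac12\zeta(z)$, whence $4\lambda(a)\lambda(c)\sin(\pi a/2)\sin(\pi c/2)\zeta(1-a)\zeta(1-c)=\zeta(a)\zeta(c)$ identically; using $1-\tan(\pi a/2)\tan(\pi c/2)=\cos(\pi(a+c)/2)/[\cos(\pi a/2)\cos(\pi c/2)]$ together with the functional equation at $s=a+c$ collapses the second term of $P$ to $-\tfrac12\zeta(a+c)$. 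Taking the limit gives the first two terms $\zeta(m)\zeta(n)-\tfrac12\zeta(m+n)$ of \eqref{T(m,0,n)-explicit-1}, with no singularities to resolve.

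All remaining content lives in $R(a,c)Q(a,c)$, where $R(a,c):=-2\lambda(a)\lambda(c)\sin(\pi c/2)\cos(\pi a/2)$. The key observation, read off from \eqref{def-lambda}, is that $\lambda(a)\cos(\pi a/2)$ is finite at $a=m$ when $m$ is odd but has a simple pole when $m$ is even, while $\lambda(c)\sin(\pi c/2)$ is finite at $c=n$ when $n$ is even but has a simple pole when $n$ is odd. Since $Q$ is finite at $(m,n)$, each simple pole of $R$ must be absorbed by expanding $Q$ to first order in the corresponding variable. Differentiating the integrand in $a$ (resp. $c$) replaces $\zeta(1-a,q)$ by $-\zeta'(1-a,q)$, and at the integer point I would use $\zeta(1-k,q)=-B_k(q)/k$ and, via \eqref{A-def}, $\zeta'(1-k,q)=A_k(q)/k$. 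Thus a variable with a regular prefactor contributes a Bernoulli polynomial $B$, whereas one with a singular prefactor contributes a Bernoulli function $A$. This dichotomy reproduces exactly the four parity cases of Theorem \ref{Thm:T(m,0,n)}: even/even yields one $A$ and one $B$ ($I_{AB}$, \eqref{ell2 m even n even}); odd/odd yields $B$ from the $a$-slot and $A$ from the $c$-slot, hence $I_{AB}$ with interchanged indices (\eqref{ell2 m odd n odd}); odd/even yields two $B$'s ($I_{BB}$, \eqref{ell2 m odd n even}); and even/odd yields two $A$'s ($I_{AA}$ and $J_{AA}$, \eqref{ell2 m even n odd}). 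Collecting the finite part of $R$ from the residues of $\lambda$ and the trigonometric values produces the common constant $(-1)^{\lfloor(m+n)/2\rfloor}(2\pi)^{m+n-1}/(m!n!)$.

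To reach the basic integrals \eqref{basic}, which involve $K(q)=\log\sin\pi q$ rather than $\cot\pi q$, I would integrate by parts using $K'(q)=\pi\cot\pi q$, so that $\int_0^1 f(q)\cot\pi q\,dq=\tfrac1\pi[fK]_0^1-\tfrac1\pi\int_0^1 f'(q)K(q)\,dq$. Here $f$ is the product of the antisymmetrized $a$-slot function (namely $A_m(q)-A_m(1-q)$ or $B_m(q)-B_m(1-q)$, up to constants) with the $c$-slot function ($A_n(q)$ or $B_n(q)$); the antisymmetry of $\zeta(1-a,q)-\zeta(1-a,1-q)$ together with $\cot\pi(1-q)=-\cot\pi q$ forces the boundary term to vanish. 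Differentiation lowers one index by one through $B_k'(q)=kB_{k-1}(q)$ and the corresponding relation for $A_k$, which accounts for the shifted indices $m-1,n-1$ and the two-term shape $m(\cdots)+n(\cdots)$. Applying the reflections $B_k(1-q)=(-1)^kB_k(q)$ and the one for $A_k(1-q)$, after the substitution $q\mapsto 1-q$ under which $K$ is invariant, converts the antisymmetrized integrands into the stated $I_{BB}$, $I_{AB}$, $I_{AA}$ and $J_{AA}$; the survival of $A_m(1-q)A_n(q)$ in the even/odd case is precisely what produces the $J_{AA}$ terms.

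The main obstacle is the bookkeeping of these first-order expansions against the parity-dependent pole structure, and in particular the cancellation of \emph{spurious} cross terms. The derivative relation for $A_k$ is not $A_k'=kA_{k-1}$ but carries an extra Bernoulli-polynomial piece, so integrating by parts in the even/even case appears to generate an $I_{BB}$ contribution absent from \eqref{ell2 m even n even}. I expect to show these $B$-type remainders cancel identically once the antisymmetrization $A_m(q)-A_m(1-q)$ and the reflection $B_{m-1}(1-q)=(-1)^{m-1}B_{m-1}(q)$ are combined: for $m$ even the two $B_{m-1}$ pieces occur with opposite signs and sum to zero. Carrying out this cancellation in all four cases, keeping the signs from the $\lambda$-residues and trigonometric values consistent, and checking convergence at $q=0,1$, is the delicate part of the argument; the remainder is a systematic, if lengthy, computation.
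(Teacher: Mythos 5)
Your proposal is correct, and for the heart of the theorem it is the same argument as the paper's: the paper likewise takes the limit of Theorem \ref{Thm:T(a,0,c)}, splits the right-hand side into the diagonal limit $L_1(m,n)$ of \eqref{L1-def} and the integral limit $L_2(m,n)$ of \eqref{L2-def}, plays the parity-dependent poles of $\lambda(a)\cos(\pi a/2)$ and $\lambda(c)\sin(\pi c/2)$ against the expansion $\barzeta(k+\varepsilon,q)=-\frac{1}{k}\left[B_k(q)+\varepsilon A_k(q)\right]+o(\varepsilon)$ --- so a singular slot contributes an $A$-function and a regular slot a Bernoulli polynomial, which is exactly your four-case dichotomy --- and then converts the $\cot\pi q$ integrals into $I_{BB},I_{AB},I_{AA},J_{AA}$ via $\pi\cot\pi q=K'(q)$, integration by parts, and the recurrences \eqref{B derivative}--\eqref{A derivative}; the cancellation you anticipate for the spurious $B_{m-1}$ terms (via $B_{m-1}(1-q)=-B_{m-1}(q)$ for $m$ even) is precisely what makes the paper's case (a) close, so that worry is well-founded and resolves as you predict. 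Where you genuinely diverge is the diagonal part: the paper evaluates $L_1$ by a four-case expansion in $\varepsilon_a,\varepsilon_c$ (residues of $\lambda$, parity-dependent behavior of $\sin$, $\tan$, $\barzeta$) and then checks via the relations \eqref{zeta-simp} that every case collapses to $\zeta(m)\zeta(n)-\tfrac{1}{2}\zeta(m+n)$, whereas you observe that the functional equation gives $\lambda(z)\sin(\pi z/2)\zeta(1-z)=\tfrac{1}{2}\zeta(z)$ and, combined with the addition formula for $\cos\left(\tfrac{\pi}{2}(a+c)\right)$, Euler reflection and the functional equation at $s=a+c$, that the whole diagonal piece equals $\zeta(a)\zeta(c)-\tfrac{1}{2}\zeta(a+c)$ identically for non-integer $a,c$; this is a cleaner route, eliminating all parity bookkeeping and all limit analysis from that half of the proof. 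One point of logical hygiene: you deduce that the integral factor must vanish at a pole of the prefactor from finiteness (``the pole must be absorbed''); the paper instead verifies the vanishing directly --- for $m$ even because $B_m(q)-B_m(1-q)=0$, and for $n$ odd because the full integrand is antisymmetric about $q=1/2$ --- and you should do likewise, since it is these identities, rather than finiteness of $T(m,0,n)$ itself, that also determine which first-order coefficient survives in each slot.
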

The proof of this theorem is the subject of Section \ref{S:T(m,0,n)}.

\medskip

In the case of even weight $m+n = N$, the Tornheim sum $T(m,0,n)$ is given
in terms of the single family of integrals $I_{AB}(k,l)$. The next theorem 
shows that these 
can be expressed in terms of the family of integrals
\begin{equation}
X_{k,l} := (-1)^{\lfloor{l/2 \rfloor}} 
\frac{l!}{(2\pi)^l}\ione  \log \Gamma(q) B_{k}(q) 
\Cl_{l+1} (2 \pi q)  \, dq. 
\label{def-X}
\end{equation}

\begin{Thm}
\label{really-final}
Assume $m, \, n \in \mathbb{N}$ satisfy $m, \, n \geq 2$ and that the 
weight $N:= m+n$ is even. 
Define
\begin{equation}
T_1(m,n) := \zeta(m)\zeta(n) - \frac{1}{2} \zeta(N)
\end{equation}
and
\begin{equation}
T_2(m,n) := -\sum _{k=1}^{N/2-2} \binom{N-2 -2 k}{m-1}\zeta (2 k+1) \zeta (N-1-2k)
+(-1)^{N/2-1} Y_{m,n}^*,
\end{equation}
where
\begin{equation}\label{def-Y}
\begin{split}
Y_{m,n}^{*} &:= \frac{2(2\pi)^{N-2}}{m!(N-m-2)!} 
\sum_{j=0}^{m} (-1)^{j} \binom{m}{j} X_{j,N-2-j} \cr
&\qquad\qquad\qquad\qquad\qquad\qquad\qquad
+ (-1)^{\tfrac{N}{2} -1} \binom{N-2}{m-1} \zeta(N-1) \log 2 \pi \cr
Y_{m,n}^{*} &:= Y_{m,n}
+ (-1)^{\tfrac{N}{2} -1} \binom{N-2}{m-1} \zeta(N-1) \log 2 \pi.
\end{split}
\end{equation}

\medskip
\noindent
Then $T(m,0,n)$ can be written as
\begin{equation}
T(m,0,n) = 
\begin{cases}
T_1(m,n)+T_2(m,n), & \text{$m$ and $n$ odd}, \\
T_1(m,n)+T_2(n,m), & \text{$m$ and $n$ even}.
\end{cases}
\end{equation}

\medskip

It follows that, for even weight $N=m+n$, $T(m,0,n)$ is determined by
either $Y_{m,n}$ or $Y_{n,m}$, depending on the parities of $m$
and $n$. Note that both $Y_{m,n}$ and $Y_{n,m}$ are linear combinations
of integrals $X_{k,l}$ with fixed $k+l=N-2=m+n-2$.
\end{Thm}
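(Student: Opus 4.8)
The plan is to derive this statement as a reorganization of Theorem \ref{Thm:T(m,0,n)}, whose output in the even-weight cases is already expressed through a single family of integrals. Since $N=m+n$ is even, $m$ and $n$ have the same parity, so only cases (a) and (b) of Theorem \ref{Thm:T(m,0,n)} occur, and in both of them $\ell_2(m,n)$ is a combination of two integrals $I_{AB}(k,l)$ whose indices satisfy $k+l=N-1$. The term $T_1(m,n)=\zeta(m)\zeta(n)-\tfrac12\zeta(N)$ is already isolated in \eqref{T(m,0,n)-explicit-1}, so the entire content of the theorem is the assertion that $(-1)^{N/2}(2\pi)^{N-1}(m!\,n!)^{-1}\ell_2(m,n)$ equals $T_2(m,n)$ when $m,n$ are odd and $T_2(n,m)$ when $m,n$ are even. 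Thus everything reduces to a single conversion lemma expressing each $I_{AB}(k,N-1-k)$ of the relevant parity as a linear combination of the $X_{j,N-2-j}$ together with products of zeta values; the swap between the odd and even cases will come for free from the fact that case (a) places the $A$-index on the $m$-side while case (b) places it on the $n$-side.

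For the conversion I would use two elementary tools. First, from \eqref{K-def} and $\Cl_1(2\pi q)=-\log\bigl(2\sin\pi q\bigr)$ one has $K(q)=-\log 2-\Cl_1(2\pi q)$, and the iterated antiderivatives of $\Cl_1(2\pi q)$ are exactly the higher Clausen functions: each integration sends $\Cl_r(2\pi q)$ to $\pm(2\pi)^{-1}\Cl_{r+1}(2\pi q)$ up to an additive constant. Performing $l$ such integrations produces the factor $(-1)^{\lfloor l/2\rfloor}(2\pi)^{-l}\Cl_{l+1}(2\pi q)$, which is precisely the normalization appearing in the definition \eqref{def-X} of $X_{k,l}$, and the additive constants collected along the way are the odd zeta values $\zeta(3),\zeta(5),\dots$ (since $\Cl_{2r+1}(0)=\zeta(2r+1)$ and $\Cl_{2r}(0)=0$). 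Second, differentiating $A_k(q)=k\zeta'(1-k,q)$ via $\partial_q\zeta(s,q)=-s\zeta(s+1,q)$ and $\zeta(-n,q)=-B_{n+1}(q)/(n+1)$ gives the recursion $A_k'(q)=kA_{k-1}(q)+\tfrac{k}{k-1}B_{k-1}(q)$ for $k\ge2$, terminating at Lerch's formula $A_1(q)=\zeta'(0,q)=\log\Gamma(q)-\tfrac12\log 2\pi$. Hence every derivative applied to $A_k$ lowers its index toward $A_1$, and it is through this reduction that the factor $\log\Gamma(q)$ of the $X$-integrals enters.

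The mechanism of the conversion lemma is then repeated integration by parts in $I_{AB}(k,l)=\tfrac1\pi\int_0^1 A_k B_l\,K\,dq$: I would move derivatives onto the $A_k$ factor (reducing it, via the recursion above, to $\log\Gamma(q)$ plus Bernoulli-polynomial remainders) while transferring the matching integrations onto the product $B_l(q)K(q)$, which raises $\Cl_1$ to $\Cl_{l+1}$ as described and, through the Leibniz expansion of the iterated differentiation/integration against the polynomial $B_l$, generates precisely the alternating binomial combination $\sum_{j=0}^{m}(-1)^j\binom{m}{j}X_{j,N-2-j}$ recorded in \eqref{def-Y}; the $l!$ prefactor in \eqref{def-X} is the factorial accumulated by integrating against the polynomial degree. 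The boundary contributions at $q=0,1$, evaluated with $\Cl_{2r+1}(0)=\zeta(2r+1)$ and $\Cl_{2r}(0)=0$, collapse into the products $\zeta(2k+1)\zeta(N-1-2k)$ appearing in $T_2$, while the single surviving constant yields the correction $(-1)^{N/2-1}\binom{N-2}{m-1}\zeta(N-1)\log 2\pi$ of \eqref{def-Y}.

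Finally I would reassemble the two converted integrals into $\ell_2(m,n)$, match the prefactor $(-1)^{N/2}(2\pi)^{N-1}(m!\,n!)^{-1}$ against the $(2\pi)^{N-2}\bigl(m!\,(N-m-2)!\bigr)^{-1}$ of $Y_{m,n}$ in \eqref{def-Y}, and check that the case-(a)/case-(b) asymmetry interchanges $Y_{m,n}$ with $Y_{n,m}$, hence $T_2(m,n)$ with $T_2(n,m)$. The closing remark that $Y_{m,n}$ and $Y_{n,m}$ are combinations of $X_{k,l}$ with $k+l=N-2$ is then immediate from \eqref{def-Y}. I expect the main obstacle to be purely combinatorial: verifying that the iterated integration-by-parts against the polynomial factor produces exactly the coefficients $(-1)^j\binom{m}{j}$ and the binomial $\binom{N-2-2k}{m-1}$ on the zeta products, and that the overall sign works out to $(-1)^{N/2-1}$, with no stray boundary terms. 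Ensuring that the two parity cases can be carried through in genuinely parallel fashion, rather than requiring separate ad hoc manipulations, is the point that will demand the most care.
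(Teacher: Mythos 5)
Your proposal is correct and follows essentially the same route as the paper: starting from Theorem \ref{Thm:T(m,0,n)} (cases (a) and (b) only, since even weight forces equal parities), you reduce the $I_{AB}$ integrals by iterated integration by parts, using the recursion $A_k'(q)=kA_{k-1}(q)+\tfrac{k}{k-1}B_{k-1}(q)$ to drive the $A$-index down to $A_1(q)=\log\Gamma(q)-\log\sqrt{2\pi}$ while the iterated antiderivatives of the kernel (the paper's $K_l(q)$, whose expansion \eqref{Kn-Fourier} is exactly your Clausen-plus-polynomial description) supply the factor $\Cl_{l+1}(2\pi q)$ and, through boundary values and the polynomial remainders, the zeta products and the $\log 2\pi$ correction. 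The paper likewise stops short of the full combinatorial assembly ("the details are left to the reader"), so your acknowledgment that the binomial bookkeeping is the remaining work puts you at the same endpoint.
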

We discuss this theorem in Section \ref{S:N even - Clausen}.

\bigskip

\section{An expression for the Tornheim series $T(a,0,c)$} 
\label{S:T(a,0,c)}

In this section we present an analytic expression for the Tornheim double series
$T(a,0,c)$, valid for $a>2, \, c>2$ and $a, \, c \not \in \mathbb{N}$. Note 
that 
$T(a,0,c)$ will 
be finite if  $a> 1$ and $c>1$. We employ here and in Section \ref{S:T(m,0,n)}
the shorthand notation
\begin{equation}
\bar{\zeta}(z,q) := \zeta(1-z,q)\quad \text{and} \quad
\bar{\zeta}(z) := \zeta(1-z), \quad\text{for }z\neq 0.
\end{equation}
\noindent
All the properties of the Hurwitz zeta function that we will use can be
expressed in terms of the function $\barzeta(z,q)$ as follows:
\begin{align}
\frac{d}{dq} \barzeta(z,q) &= (z-1)\barzeta(z-1,q),\\
\barzeta (k,q) &= - \frac{1}{k} B_k (q),\\
\barzeta'(k,q) &= - \frac{1}{k} A_k (q).
\end{align}
where $B_k(q)$ is a Bernoulli polynomial and $A_k(q)$ is the Bernoulli function
\eqref{A-def}.

The identities derived below appear from integration by parts. The restrictions
imposed on the parameters guaranteee that the boundary terms vanish. Recall
that the Hurwitz zeta function satisfies the identity
\begin{equation}
\nn
\zeta(z,q)=q^{-z}+\zeta(z,1+q),
\end{equation}
which implies $\zeta(z,0)=\zeta(z,1)$ if $z<0$. Equivalently,
\begin{equation}
\label{bt-zeta}
\barzeta(z,0)=\barzeta(z,1)\quad \text{if} \quad z>1.
\end{equation}

\begin{Thm}
\label{Thm:T(a,0,c)-b}
Let $a,c \in \mathbb{R} - \mathbb{N}$ with $a, \, c > 2$. Then
\begin{multline}\label{T-explicit0}
T(a,0,c) =  4 \lambda(a) \lambda(c) \sin\left(\frac{\pi c}{2}\right)
\\
\times\Bigg[
\sin\left(\frac{\pi a}{2}\right)
\left\{
\barzeta(a)\barzeta(c)
-\frac{\barzeta(a+c)B(a,c)}{1-\tan\left(\frac{\pi a}{2}\right)\tan\left(\frac{\pi c}{2}\right)}
\right\}
\\
- \frac{1}{2}\cos\left(\frac{\pi a}{2}\right)
\int_0^1\left[\barzeta(a,q)-\barzeta(a,1-q)\right]\barzeta(c,q)\cot\pi q\,dq
\Bigg].
\end{multline}
\end{Thm}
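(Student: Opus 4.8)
The plan is to obtain \eqref{T-explicit0} by specializing the master representation of the general Tornheim sum $T(a,b,c)$ established in Part~I (Theorem~\ref{main Thm part I}) to the case $b=0$. Since that representation is stated only for $b\in\allR-\Nzero$, I would reach $b=0$ through the limit $b\to0^{+}$: on the series side \eqref{seriestz} this limit is harmless because for $s\ge 1$ each factor $s^{-b}=e^{-b\log s}$ increases monotonically to $1$, so $T(a,b,c)\uparrow T(a,0,c)$ by monotone convergence; on the analytic side the limit must be handled with care, since the factor $\barzeta(b)=\zeta(1-b)$ carries a pole at $b=0$. The first genuine task is thus to show that this pole is cancelled by a compensating factor (coming from $\lambda(b)$, or from a $\barzeta(b,q)$ appearing under the integral sign), so that the $b\to0$ limit of the Part~I formula is finite and collapses onto the right-hand side of \eqref{T-explicit0}.

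Once $b=0$ has been installed, I would reorganize the surviving integrals using the two structural tools available for $\barzeta$. First, I would split $\barzeta(a,q)$ into its parts that are even and odd under $q\mapsto1-q$: the odd part is precisely the combination $\barzeta(a,q)-\barzeta(a,1-q)$ that appears in \eqref{T-explicit0}, and through the Hurwitz functional equation this combination is proportional to $\sin(\pi a/2)\sum_{k\ge1}k^{-a}\sin 2\pi kq$, which is what isolates the genuinely non-elementary integral. Second, I would integrate by parts using $\frac{d}{dq}\barzeta(z,q)=(z-1)\barzeta(z-1,q)$, whose boundary contributions vanish by \eqref{bt-zeta}; this both normalizes each integral as displayed and trades one kernel for another, generating the $\cot\pi q$ weight.

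The two \emph{closed-form} contributions would then be extracted from the integrals that factor through orthogonality. The product term $\barzeta(a)\barzeta(c)$ arises from the constant (zero-frequency) Fourier contributions, while the term $\barzeta(a+c)B(a,c)$ comes from a convolution-type integral whose value is a Beta function $B(a,c)=\Gamma(a)\Gamma(c)/\Gamma(a+c)$ rather than a bare $\zeta(a+c)$. Collecting the trigonometric prefactors and rewriting every Gamma-factor through the reflection form $\lambda(z)=\pi/\big((2\pi)^{1-z}\Gamma(z)\sin\pi z\big)$ of \eqref{def-lambda} should then assemble the overall factor $4\lambda(a)\lambda(c)\sin(\pi c/2)$ together with the internal $\sin(\pi a/2)$ and $\cos(\pi a/2)$ weights.

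I expect two steps to be the main obstacles. The first is the pole bookkeeping in the $b\to0$ limit: several factors individually blow up or vanish there, and they must be paired correctly for the limit to be finite. The second is reproducing the exact closed form $\barzeta(a+c)B(a,c)/\big(1-\tan(\pi a/2)\tan(\pi c/2)\big)$: the denominator $1-\tan(\pi a/2)\tan(\pi c/2)$ is the fingerprint of adding a cosine--cosine orthogonality contribution to a sine--sine one and then factoring out $\sin(\pi a/2)\sin(\pi c/2)$, while obtaining the Beta function instead of $\zeta(a+c)$ means the relevant integral must be recognized as a beta-type convolution. Keeping the Gamma- and trigonometric factors consistent through the reflection formula is where the computation is most error-prone.
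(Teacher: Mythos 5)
Your starting point coincides with the paper's: specialize the Part~I master formula (Theorem \ref{main Thm part I}) by letting $b=\eps\to 0^{+}$, with the series side handled by monotone convergence. But your plan for the analytic side rests on two misconceptions that would make it fail. First, you cannot ``install $b=0$'' and then ``reorganize the surviving integrals'': the integrals $I$ and $J$ in \eqref{T-explicit} contain the factor $\barzeta(\eps,q)=\zeta(1-\eps,q)$, which at $\eps=0$ is the divergent $\zeta(1,q)$, so there are no surviving integrals to manipulate. (Also, the pole is not carried by a factor $\barzeta(b)$ --- no such factor appears in \eqref{T-explicit} --- and $\lambda(b)$ is regular at $b=0$ with $\lambda(0)=\tfrac{1}{2\pi}$, so it cannot be the compensating factor.) What actually happens is a $\tfrac{1}{\eps}\times O(\eps)$ structure: after shifting arguments by parts one gets $T(a,\eps,c)=\frac{4\lambda(a)\lambda(\eps)\lambda(c)}{\eps(\eps+1)}\sin\left(\tfrac{\pi c}{2}\right)LT(a,\eps,c)$ as in \eqref{T-exp1}; the $O(1)$ part of $LT$ cancels identically ($M_{+}=M_{-}$), and the \emph{entire} right-hand side of \eqref{T-explicit0} --- both the closed-form terms and the integral --- comes from the coefficient of $\eps$ in $LT$, namely $C_a(N_+-N_-)+\pi S_a M_+$. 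Second, and relatedly, the $\cot\pi q$ kernel cannot be ``generated'' by integrating by parts with $\frac{d}{dq}\barzeta(z,q)=(z-1)\barzeta(z-1,q)$: that ladder only shifts arguments among Hurwitz zeta functions and never produces a transcendental kernel. In the paper it arises from the $\eps$-derivative of the integrand, via $\zeta'(0,q)+\zeta'(0,1-q)=-\log(2\sin\pi q)$ in \eqref{gamma-term}, followed by one integration by parts turning $\log\sin\pi q$ into $\pi\cot\pi q$. (An equivalent mechanism would be $\psi(1-q)-\psi(q)=\pi\cot\pi q$ as the finite part of $\barzeta(\eps,q)-\barzeta(\eps,1-q)$, but either way an expansion in $\eps$, not an integration by parts in $q$, is what creates the kernel.)

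There is also a technical obstacle you never address, and it is the entire reason Lemmas \ref{recursion1} and \ref{exp-corr} exist: the Laurent expansion $\barzeta(\eps,q)=-\tfrac{1}{\eps}-\psi(q)+O(\eps)$ is \emph{not uniform} in $q$, since near $q=0$ one has $\barzeta(\eps,q)=q^{\eps-1}+\zeta(1-\eps)+O(q)$. Hence expanding under the integral sign and integrating term by term --- which any version of your ``pole bookkeeping'' implicitly requires --- is unjustified as stated. The paper circumvents this by integrating by parts twice so that only $\barzeta(2+\eps,q)=\zeta(-1-\eps,q)$ appears, whose $\eps$-expansion is uniform on $[0,1]$; only then is the expansion \eqref{LT} legitimate. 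Your Fourier/orthogonality route for the closed-form terms (Hurwitz's formula plus a beta-type convolution) is a plausible alternative to the paper's use of the two quadratic integral evaluations from Theorem 3.1 of \cite{espmoll1}, and the denominator $1-\tan\left(\tfrac{\pi a}{2}\right)\tan\left(\tfrac{\pi c}{2}\right)$ would indeed emerge that way; but it cannot rescue the argument unless the two structural points above --- where the finite limit actually comes from, and where the $\cot\pi q$ kernel actually comes from --- are repaired first.
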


The proof is obtained by analyzing the behavior as $\eps \to 0$ of $T(a,b,c)$
given in (\ref{T-explicit}) below. This was first derived in \cite{espmoll3}.
The parameter $b$ is changed to $\eps$ in order to remind ourselves 
that it is small.

\begin{Thm}
\label{main Thm part I}
Let $a,b,c \in \mathbb{R}$, satisfying $a+c > 1, \, b+c >1$, and 
$a+b+c > 2$,  and define the auxiliary function $\lambda(z)$ as
in \eqref{def-lambda},
\ba
\lambda(z) & := & \frac{\Gamma(1-z)}{(2 \pi)^{1-z}} =
\frac{\pi}{(2 \pi)^{1-z} \, \Gamma(z) \, \sin \pi z}.
\ea
\no
For $a,b,c \not \in \mathbb{N}$ we have
\begin{equation}\label{T-explicit}
T(a,b,c) =  4 \lambda(a) \lambda(b) \lambda(c) \sin \left( \frac{\pi c}{2} 
\right)  Q(a,b,c) 
\end{equation}
\no
where
\begin{equation}
\begin{split}
Q(a,b,c) 
:= &\cos\left(\tfrac{\pi}{2}(a-b)\right) \left[ J(c,a,b)+J(c,b,a) \right]  \\
- &\cos\left(\tfrac{\pi}{2}(a+b)\right) \left[ I(a,b,c)+J(a,b,c) \right]
\end{split}
\end{equation}
\noindent
and $I(a,b,c), \, J(a,b,c)$ are the integrals defined by
\begin{align} 
I(a,b,c) &:= \ione \zeta(1-a,q) \zeta(1-b,q) \zeta(1-c,q) \, dq
\label{integral-I}
\intertext{and}
J(a,b,c) &:= \ione \zeta(1-a,q) \zeta(1-b,q) \zeta(1-c,1-q) \, dq,
\label{integral-J}
\end{align}
\noindent
where $\zeta(z,q)$ is the Hurwitz zeta function.
\end{Thm}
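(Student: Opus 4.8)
The plan is to derive \eqref{T-explicit} from the series definition \eqref{seriestz} of $T(a,b,c)$ by means of Hurwitz's functional equation, viewing the integrals $I$ and $J$ of \eqref{integral-I}--\eqref{integral-J} as symmetric objects in which all three Tornheim ``channels'' occur at once. For $a>1$ and $0<q<1$, Hurwitz's formula gives
\[
\zeta(1-a,q)=\frac{\Gamma(a)}{(2\pi)^{a}}\left(e^{-\pi i a/2}\sum_{r\ge1}\frac{e^{2\pi i r q}}{r^{a}}+e^{\pi i a/2}\sum_{r\ge1}\frac{e^{-2\pi i r q}}{r^{a}}\right),
\]
and likewise for $\zeta(1-b,q)$ and $\zeta(1-c,q)$; the factor $\zeta(1-c,1-q)$ occurring in $J$ is obtained by $q\mapsto1-q$, which merely interchanges the two exponential sums because the frequencies are integers. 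First I would substitute these expansions into \eqref{integral-I} and \eqref{integral-J}, expanding each integrand into the $2^{3}=8$ products of exponential series.

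Working in the regime $a,b,c>1$, where the three Dirichlet series converge absolutely and termwise integration is justified, I would then integrate over $q\in[0,1]$ using $\ione e^{2\pi i k q}\,dq=\delta_{k,0}$. Since $r,s,t\ge1$, only the three ``resonance'' conditions $t=r+s$, $s=r+t$ and $r=s+t$ survive, each from a conjugate pair of terms, and each produces a Tornheim sum with permuted arguments (recall $T$ is symmetric in its first two slots). Setting $P:=\Gamma(a)\Gamma(b)\Gamma(c)/(2\pi)^{a+b+c}$ and writing $T(a,c,b)$, $T(b,c,a)$ for the sums carrying the ``sum exponent'' on $b$ and on $a$, the bookkeeping yields
\begin{align}
I(a,b,c)&=2P\Big[\cos\tfrac{\pi(a+b-c)}{2}\,T(a,b,c)+\cos\tfrac{\pi(a+c-b)}{2}\,T(a,c,b)+\cos\tfrac{\pi(b+c-a)}{2}\,T(b,c,a)\Big],\notag\\
J(a,b,c)&=2P\Big[\cos\tfrac{\pi(a+b+c)}{2}\,T(a,b,c)+\cos\tfrac{\pi(b+c-a)}{2}\,T(a,c,b)+\cos\tfrac{\pi(a+c-b)}{2}\,T(b,c,a)\Big],\notag
\end{align}
with $J(c,a,b)$ and $J(c,b,a)$ read off by permuting the arguments.

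Next I would assemble $Q(a,b,c)$. The point of the precise combination is that the coefficients of the two ``wrong'' channels cancel: by the product-to-sum identities, both the $\cos\tfrac{\pi}{2}(a-b)$ and the $\cos\tfrac{\pi}{2}(a+b)$ terms contribute $\cos\tfrac{\pi c}{2}(\cos\pi a+\cos\pi b)$ to the coefficient of $T(a,c,b)+T(b,c,a)$, so their difference is zero. Using $\cos\pi(a-b)-\cos\pi(a+b)=2\sin\pi a\sin\pi b$, the surviving coefficient collapses to $Q(a,b,c)=4P\cos\tfrac{\pi c}{2}\sin\pi a\sin\pi b\,T(a,b,c)$. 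Multiplying by $4\lambda(a)\lambda(b)\lambda(c)\sin\tfrac{\pi c}{2}$, using $\sin\tfrac{\pi c}{2}\cos\tfrac{\pi c}{2}=\tfrac12\sin\pi c$ and the reflection form $\lambda(z)\sin\pi z=\pi/((2\pi)^{1-z}\Gamma(z))$ of \eqref{def-lambda}, all Gamma and trigonometric factors cancel against $P$ and reduce the overall constant to $8\pi^{3}/(2\pi)^{3}=1$, reproducing \eqref{T-explicit}. That this constant is exactly $1$ is a stringent check on the phase bookkeeping. This proves the identity for $a,b,c>1$ non-integer; to reach the full range I would continue analytically in $(a,b,c)$, noting that $T$ is analytic throughout \eqref{cond-conv}, that $I,J$ converge there (near $q=0$ the $I$-integrand behaves like $q^{a+b+c-3}$, integrable exactly when $a+b+c>2$), and that both sides are analytic away from integers, so agreement on an open subset propagates.

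The main obstacle is justifying the interchange of summation and integration. Outside the absolutely convergent regime the Fourier series for $\zeta(1-a,q)$ is only conditionally convergent near $q=0,1$, so I would either confine the direct computation to $a,b,c>1$ and reach the stated region by the continuation above, or else estimate the endpoint contributions directly. The remaining labor --- tracking the eight sign patterns and executing the product-to-sum reductions --- is routine but error-prone, and the clean final constant $1$ is precisely what certifies it has been carried out correctly.
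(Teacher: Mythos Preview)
The paper does not actually prove this theorem here: it is quoted verbatim from the authors' earlier article \cite{espmoll3} (``Part 1''), as the text explicitly says just before stating it. So there is no in-paper proof to compare against.

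Your argument is correct and is, in essence, the method of \cite{espmoll3}: insert Hurwitz's Fourier expansion for each $\zeta(1-\cdot,q)$ (with the reflected argument for the third factor in $J$), integrate termwise using $\int_0^1 e^{2\pi i k q}\,dq=\delta_{k,0}$ so that only the three resonance patterns $t=r+s$, $s=r+t$, $r=s+t$ survive, identify the resulting sums as $T(a,b,c)$, $T(a,c,b)$, $T(b,c,a)$, and then observe that the particular combination defining $Q$ annihilates the two unwanted channels. Your trigonometric bookkeeping checks out: the coefficients of $T(a,c,b)$ and of $T(b,c,a)$ in $Q$ each reduce to $\cos\tfrac{\pi c}{2}(\cos\pi a+\cos\pi b)$ from both bracketed groups and hence cancel in the difference, while the surviving coefficient of $T(a,b,c)$ is $4P\cos\tfrac{\pi c}{2}\sin\pi a\sin\pi b$, and the final constant indeed collapses to $1$ via the reflection formula. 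Proving the identity first for $a,b,c>1$ (where the Fourier series converge absolutely and Fubini is immediate) and then invoking analytic continuation is the right way to handle the stated range; one only has to note that the individual integrals $J(a,b,c)$, $J(c,a,b)$, $J(c,b,a)$ may require slightly stronger conditions (such as $a+b>1$) than those listed, but the combination $Q$ continues analytically to the full convergence region of $T$.
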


\medskip
\no
We consider the behavior of each of the factors 
in \eqref{T-explicit} as $b=\eps\to 0$. 

\medskip
\noindent
1) The term $\lambda(\eps)$ is regular in view of  $\lambda(0) = \frac{1}{2 \pi}$. 
The first few terms of its expansion are
\begin{equation}
\lambda(\eps) =  \frac{1}{2 \pi} + \frac{(\gamma + \log 2 \pi)}{2 \pi} \eps +
O( \eps^{2}). 
\end{equation}

\medskip

\no
2) The expansion
\begin{equation}
\cos \left( \frac{\pi}{2} ( a \pm \eps) \right) =  \cos \frac{\pi a}{2}  \mp
\frac{\pi}{2} \sin \frac{\pi a }{2} \eps  + O(\eps^{2})
\end{equation}
\no
is elementary.

\medskip

\no
3) To obtain the limiting value of  $Q(a, \eps, c)$ as $\eps \to 0$, it is 
required to analyze the 
behavior of $\barzeta(\eps, q)$ as it appears in
\begin{equation}
I(a,\eps,c) = \ione \barzeta(a,q) \barzeta(\eps, q) \barzeta(c,q) \, dq.
\label{int-25}
\end{equation}
\no
The function $\bar{\zeta}(\eps,q)$ has a pole at $\eps=0$ and its Laurent 
expansion at that pole is
\begin{equation}
\barzeta(\eps,q) = - \frac{1}{\eps} - \psi(q) + O(\eps).
\end{equation}
\no
where $\psi(q) = \frac{d}{dq}  \log \Gamma(q)$.
This expansion is not uniform in $q$, as the difference 
\begin{equation}
\barzeta(\eps,q) - \left( - 1/\eps - \psi(q) \right)
\label{blow}
\end{equation}
blows up as $q \to 0$, for any
{\em fixed } $\eps > 0$. Indeed,
\begin{equation}
\psi(q) = - \frac{1}{q} - \gamma +  O(q),
\end{equation}
\noindent
and 
\begin{equation}
\barzeta(\eps,q) = q^{\eps -1} + \zeta(1- \eps) + O(q),
\end{equation}
\noindent
showing that (\ref{blow}) is not bounded as $q \to 0$. 
This issue is resolved by shifting the second 
argument of the integral in (\ref{int-25}). Lemma \ref{recursion1} gives 
the integral $I(a, \eps, c)$
in terms of integrals of the form $I(a',1+\eps, c')$. The integrand here 
contains the factor $\barzeta(1+\eps,q) = \zeta(- \eps, q)$, whose expansion involves 
$\log \Gamma(q)$, producing a similar blow up. Lemma \ref{exp-corr} shifts the 
second argument again, and now $I(a, \eps, c)$ is expressed in terms of 
$I(a', 2 + \eps, c')$. The integrand now contains $\barzeta(2+\eps,q) = \zeta(-1 - \eps,q)$ and 
a uniform expansion is finally achieved.  \\

\begin{Lem}
\label{recursion1}
For $\eps > 0$ and $a, \, c > 1$ we have 
\begin{equation}
I(a,\eps,c) = -\frac{1}{\eps} 
\left[ (a-1) I(a-1,\eps+1,c) + (c-1) I(a,\eps+1,c-1) \right].
\end{equation}
\no
Similarly,
\begin{equation}
J(a,\eps,c) = -\frac{1}{\eps} 
\left[ (a-1) J(a-1,\eps+1,c) - (c-1) J(a,\eps+1,c-1) \right].
\end{equation}
\end{Lem}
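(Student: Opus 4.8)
The plan is to prove both identities by a single integration by parts, exploiting the differentiation rule $\frac{d}{dq}\barzeta(z,q) = (z-1)\barzeta(z-1,q)$ recorded earlier in this section. The key observation is that this rule, applied with $z=\eps+1$, gives
\[
\barzeta(\eps,q) = \frac{1}{\eps}\,\frac{d}{dq}\barzeta(\eps+1,q),
\]
so the factor $\barzeta(\eps,q)$ sitting in the integrands of $I(a,\eps,c)$ and $J(a,\eps,c)$ can be written as a total derivative and then transferred onto the remaining two factors.

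First I would treat $I$. Substituting the expression above into $I(a,\eps,c)=\int_0^1\barzeta(a,q)\barzeta(\eps,q)\barzeta(c,q)\,dq$ and integrating by parts moves the derivative onto the product $\barzeta(a,q)\barzeta(c,q)$. Applying the product rule together with the differentiation rule to each factor yields $\frac{d}{dq}\big(\barzeta(a,q)\barzeta(c,q)\big)=(a-1)\barzeta(a-1,q)\barzeta(c,q)+(c-1)\barzeta(a,q)\barzeta(c-1,q)$, and recognizing the two resulting integrals as $I(a-1,\eps+1,c)$ and $I(a,\eps+1,c-1)$ produces the first formula, complete with the overall factor $-1/\eps$. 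The integral $J$ is handled in exactly the same way; the only difference is that its third factor is $\barzeta(c,1-q)$, so the chain rule supplies an extra sign, $\frac{d}{dq}\barzeta(c,1-q)=-(c-1)\barzeta(c-1,1-q)$. This sign is precisely the origin of the relative minus between the two terms in the $J$ recursion.

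The hard part — indeed the only point requiring any care — will be showing that the boundary term $\frac{1}{\eps}\barzeta(a,q)\barzeta(c,q)\barzeta(\eps+1,q)\big|_0^1$ (and its $J$-analogue) vanishes. This is exactly where the hypotheses $a,c>1$ and $\eps>0$ are used: under these conditions each of the three arguments $a$, $c$, and $\eps+1$ exceeds $1$, so by \eqref{bt-zeta} every factor takes the same value at $q=0$ and $q=1$, and the boundary contribution cancels. For $J$ one evaluates $\barzeta(c,1-q)$ at the endpoints, which interchanges $\barzeta(c,0)$ and $\barzeta(c,1)$; these are again equal for the same reason, so the boundary term still vanishes. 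Once the boundary contributions are discarded, both recursions follow at once.
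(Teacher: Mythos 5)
Your proof is correct and follows essentially the same route as the paper: the identity $\barzeta(\eps,q)=\tfrac{1}{\eps}\tfrac{d}{dq}\barzeta(1+\eps,q)$, one integration by parts transferring the derivative onto the other two factors, and the vanishing of the boundary terms via $\barzeta(z,0)=\barzeta(z,1)$ for $z>1$ under the hypotheses $\eps>0$, $a,c>1$. The paper only sketches the $J$ case (``follows along the same lines''), whereas you make the sign from $\tfrac{d}{dq}\barzeta(c,1-q)=-(c-1)\barzeta(c-1,1-q)$ explicit, which is exactly the intended argument.
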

\begin{proof}
The identity
\begin{equation}
\barzeta(\eps,q) = \frac{1}{\eps} \frac{d}{dq} \barzeta(1+\eps,q) 
\end{equation}
\no
yields, for $\eps > 0$,
\begin{align*}
I(a,\eps,c) & = \frac{1}{\eps} \ione \barzeta(a,q) \barzeta(c,q) \frac{d}{dq}
\barzeta(1+\eps,q) \, dq \cr
 & = \frac{1}{\eps} \barzeta(a,q) \barzeta(c,q) \barzeta(1+\eps,q) \Big|_{0}^{1}  -\frac{1}{\eps} \ione \barzeta(1+\eps,q) 
\frac{d}{dq} \left[ \barzeta(a,q) \barzeta(c,q) \right] \, dq.
\end{align*}
\no
Now observe that the boundary terms vanish for $\eps >0$ and 
$a, \, c > 1$, according to the identity \eqref{bt-zeta}.  The 
identity for $J$ follows along the same lines. 
\end{proof}

\medskip
\noindent
The expansion of $I(a, \eps, c)$ requires to iterate the result of Lemma 
\ref{recursion1} twice. 

\begin{Lem}
\label{exp-corr}
For $\eps > 0$ and $a, \, c > 2$, 
\begin{multline*}
I(a, \eps, c)  = \frac{1}{\eps(\eps+1)} \big[(a-1)(a-2) I(a-2,\eps+2,c)\\ 
+ 2(a-1)(c-1) I(a-1,\eps+2,c-1) + (c-1)(c-2) I(a,\eps+2,c-2)\big],
\end{multline*}
with similar expressions for $J(a, \eps, c)$ and $J(c, a, \eps)$.
\end{Lem}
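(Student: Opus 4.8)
The plan is to obtain the claimed three-term identity by applying the recursion of Lemma~\ref{recursion1} twice, the second time with the middle parameter already shifted from $\eps$ to $\eps+1$. Concretely, I first write
\[
I(a,\eps,c) = -\frac{1}{\eps}\left[(a-1)I(a-1,\eps+1,c) + (c-1)I(a,\eps+1,c-1)\right],
\]
and then feed each of the two integrals on the right back into Lemma~\ref{recursion1}, replacing $\eps$ by $\eps+1$ throughout. This expresses $I(a-1,\eps+1,c)$ in terms of $I(a-2,\eps+2,c)$ and $I(a-1,\eps+2,c-1)$, and $I(a,\eps+1,c-1)$ in terms of $I(a-1,\eps+2,c-1)$ and $I(a,\eps+2,c-2)$. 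Collecting the four resulting terms, the two copies of $I(a-1,\eps+2,c-1)$ carry coefficients $(a-1)(c-1)$ and $(c-1)(a-1)$, which add to the stated $2(a-1)(c-1)$; the overall prefactor becomes $\left(-\tfrac{1}{\eps}\right)\left(-\tfrac{1}{\eps+1}\right)=\tfrac{1}{\eps(\eps+1)}$, giving exactly the asserted identity. The $I$-case is therefore pure bookkeeping once Lemma~\ref{recursion1} is in hand.

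The only genuine hypothesis to monitor is $a,c>2$. The second application of Lemma~\ref{recursion1} is made to integrals whose first or third slot has been lowered by one, e.g.\ $I(a-1,\eps+1,c)$; since that lemma requires its outer arguments to exceed $1$ so that the boundary terms from \eqref{bt-zeta} vanish, I need $a-1>1$ and $c-1>1$, i.e.\ $a,c>2$. I would state this explicitly as the reason the hypothesis is strengthened from the $a,c>1$ of Lemma~\ref{recursion1}. For $J(a,\eps,c)$ the same iteration applies verbatim, but using the $J$-recursion, whose second term carries a minus sign coming from $\frac{d}{dq}\barzeta(c,1-q) = -(c-1)\barzeta(c-1,1-q)$. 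Tracking this sign through the two steps, the two cross terms now appear with opposite signs and \emph{cancel}, so the $J(a,\eps,c)$ identity has only the two diagonal terms $(a-1)(a-2)J(a-2,\eps+2,c)$ and $-(c-1)(c-2)J(a,\eps+2,c-2)$, rather than three. I would record this structural difference, since it is easy to overlook.

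The main obstacle is the case $J(c,a,\eps)$, in which the small parameter sits in the \emph{third} slot $\barzeta(\eps,1-q)$, a situation Lemma~\ref{recursion1} does not cover. Here I would first establish a companion recursion by integrating by parts against $\barzeta(\eps,1-q) = -\tfrac{1}{\eps}\frac{d}{dq}\barzeta(1+\eps,1-q)$; the extra minus sign from the chain rule cancels the one from integration by parts, so that
\[
J(c,a,\eps) = \frac{1}{\eps}\left[(c-1)J(c-1,a,\eps+1) + (a-1)J(c,a-1,\eps+1)\right],
\]
with both signs \emph{positive}, again after checking via \eqref{bt-zeta} that the boundary term $\barzeta(c,q)\barzeta(a,q)\barzeta(1+\eps,1-q)\big|_0^1$ vanishes for $\eps>0$ and $a,c>1$. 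Iterating this companion recursion once more (now at $\eps+1$) reproduces a genuine three-term formula, the cross terms adding to $2(a-1)(c-1)$ exactly as in the $I$-case. The care required in deriving this third recursion, and in keeping the three distinct sign patterns straight, is the only real content of the proof.
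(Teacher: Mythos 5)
Your overall strategy is exactly the paper's: iterate Lemma~\ref{recursion1} twice, once at $\eps$ and once at $\eps+1$, noting that the second application needs $a,c>2$ so the boundary terms controlled by \eqref{bt-zeta} still vanish. Your $I$-case and your companion recursion for $J(c,a,\eps)$ (with $\barzeta(\eps,1-q) = -\tfrac{1}{\eps}\tfrac{d}{dq}\barzeta(1+\eps,1-q)$, both resulting signs positive) are correct and match, respectively, the lemma and the first three terms of $H_{+}$ in \eqref{H+ def}.

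However, your claim for $J(a,\eps,c)$ is wrong: the cross terms do \emph{not} cancel. Track the signs carefully. Each lowering of the first slot contributes a factor $(a'-1)\cdot\bigl(-\tfrac{1}{\eps'}\bigr)$, i.e.\ sign $-$, while each lowering of the third slot contributes $-(c'-1)\cdot\bigl(-\tfrac{1}{\eps'}\bigr)$, i.e.\ sign $+$. A cross term requires one lowering of each kind, so it carries sign $(-)(+)=(-)$ \emph{regardless of the order} in which the two lowerings are performed; the two paths therefore produce identical contributions
$-\tfrac{(a-1)(c-1)}{\eps(\eps+1)}J(a-1,\eps+2,c-1)$, which add rather than cancel. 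The correct expansion is
\begin{multline*}
J(a,\eps,c) = \frac{1}{\eps(\eps+1)}\big[(a-1)(a-2)J(a-2,\eps+2,c)\\
- 2(a-1)(c-1)J(a-1,\eps+2,c-1) + (c-1)(c-2)J(a,\eps+2,c-2)\big],
\end{multline*}
a three-term formula whose middle coefficient is $-2(a-1)(c-1)$, not zero. This is precisely what appears in the paper's expression \eqref{H- def} for $H_{-}$, which contains the term $-2(1-a)(1-c)J(a-1,\eps+2,c-1)$. Under your two-term version this term would be missing, and everything downstream --- the identity $M_{+}=M_{-}$, the computation of $N_{-}$, and hence Theorem~\ref{Thm:T(a,0,c)} --- would come out wrong.
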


Replacing in the expression (\ref{T-explicit}) gives 
\begin{equation}
\label{T-exp1}
T(a,\eps,c) = \frac{4 \lambda(a) \lambda(\eps) \lambda(c)}{\eps(\eps+1)} 
\sin \left( \frac{ \pi c}{2} \right) LT(a,\eps,c)
\end{equation}
\noindent
where 
\begin{equation}
LT(a,\eps,c) =  
\cos \left( \frac{\pi}{2}(a-\eps) \right) H_{+}(a,\eps,c) - \cos \left( \frac{\pi}{2} (a+\eps) \right) H_{-}(a,\eps,c),
\label{LT-def}
\end{equation}
\noindent
with
\begin{equation}
\label{H+ def}
\begin{split}
H_{+}(a,\eps,c)  &=
(1-a)(2-a)J(c,a-2,\eps+2) + 2(1-a)(1-c)J(c-1,a-1,\eps+2) \cr
&+ (1-c)(2-c)J(c-2,a,\eps+2) +  (1-c)(2-c)J(c-2,\eps+2,a) \cr
&- 2(1-a)(1-c) J(c-1,\eps+2,a-1) + (1-a)(2-a)J(c,\eps+2,a-2),
\end{split}
\end{equation}
\noindent
and
\begin{equation}
\label{H- def}
\begin{split}
H_{-}(a,\eps,c) &= (1-a)(2-a)I(a-2,\eps+2,c) + 2(1-a)(1-c)I(a-1,\eps+2,c-1) \cr
&+ (1-c)(2-c)I(a,\eps+2,c-2) +  (1-a)(2-a)J(a-2,\eps+2,c) \cr
&- 2(1-a)(1-c) J(a-1,\eps+2,c-1)+(1-c)(2-c)J(a,\eps+2,c-2).
\end{split}
\end{equation}

The last factor in (\ref{T-exp1}) is now expanded in powers of $ \eps$ to obtain
\begin{equation}
LT(a,\eps,c) = C_{a} (M_{+}-M_{-}) + \eps \left[ C_{a}(N_{+}-N_{-}) 
+ \frac{\pi}{2} S_{a}(M_{+}+M_{-}) \right] + O(\eps^{2}),
\label{LT}
\end{equation}
\noindent
with a self-explanatory notation. For example $M_{+}$ is the limit 
as $\eps \to 0$ of the expression multiplying 
$\cos(\tfrac{\pi}{2}(a-\eps))$ in \eqref{LT-def}, $N_{+}$ is the term of 
order $\eps$ in the same factor and 
$C_{a} = \cos(\pi a/2)$.   We now
reduce all the terms in (\ref{LT}). 

\begin{Prop}
The identity $M_{+} = M_{-}$ holds.
Therefore,  the term $LT(a, \eps)$ is of order $\eps$, and 
we conclude that  $T(a, \eps, c)$, in (\ref{T-exp1}), 
is non-singular as $\eps \to 0$. 
\end{Prop}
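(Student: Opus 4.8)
The plan is to establish $M_+=M_-$ by matching, term by term, the six summands defining $M_+$ in \eqref{H+ def} (all of type $J$, evaluated at $\eps=0$) against the six summands defining $M_-$ in \eqref{H- def} (three of type $I$ and three of type $J$), using only the symmetries of the integrals $I,J$ and one reflection identity. Once this is done, the $O(\eps^0)$ coefficient $C_a(M_+-M_-)$ in the expansion \eqref{LT} of $LT(a,\eps,c)$ vanishes, so $LT=O(\eps)$; since $\lambda(\eps)=\tfrac{1}{2\pi}+O(\eps)$ is regular while the explicit prefactor in \eqref{T-exp1} is $\tfrac{1}{\eps(\eps+1)}$, the simple pole at $\eps=0$ is cancelled and $T(a,\eps,c)$ extends regularly across $\eps=0$.

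The three facts I would isolate at the outset are: (i) from \eqref{integral-I}, the integral $I(a,b,c)=\ione\barzeta(a,q)\barzeta(b,q)\barzeta(c,q)\,dq$ is invariant under all permutations of $a,b,c$; (ii) from \eqref{integral-J}, the integral $J(a,b,c)=\ione\barzeta(a,q)\barzeta(b,q)\barzeta(c,1-q)\,dq$ is symmetric in its first two arguments; and (iii) the reflection identity $\barzeta(2,q)=\barzeta(2,1-q)$, which follows from $\barzeta(2,q)=-\tfrac12 B_2(q)$ together with the parity relation $B_k(1-q)=(-1)^k B_k(q)$ specialized to $k=2$. From (iii) I extract the two consequences that do the real work: first, $J(a,b,2)=I(a,b,2)$, because moving the argument $2$ from the slot at $1-q$ into the slot at $q$ leaves the integrand unchanged; and second, applying the substitution $q\mapsto 1-q$ inside $J(x,2,y)$ and again invoking $\barzeta(2,q)=\barzeta(2,1-q)$ yields $J(x,2,y)=J(y,2,x)$.

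With these in hand the matching is mechanical. In $M_+$ the first three terms carry third argument $2$, so $J(c,a-2,2)=I(c,a-2,2)$, $J(c-1,a-1,2)=I(c-1,a-1,2)$, and $J(c-2,a,2)=I(c-2,a,2)$; reordering the arguments by the full symmetry (i) reproduces exactly the first three, $I$-type, terms of $M_-$, coefficient by coefficient. The remaining three terms of $M_+$ carry the argument $2$ in the middle slot, and applying $J(x,2,y)=J(y,2,x)$ sends $J(c,2,a-2)$, $J(c-1,2,a-1)$, $J(c-2,2,a)$ to $J(a-2,2,c)$, $J(a-1,2,c-1)$, $J(a,2,c-2)$, which are precisely the three $J$-type terms of $M_-$, again with the matching coefficients $(1-a)(2-a)$, $-2(1-a)(1-c)$, $(1-c)(2-c)$. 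Summing the two groups gives $M_+=M_-$.

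I do not anticipate a genuine obstacle, since every step is a symmetry manipulation; the one point requiring care, and the conceptual heart of the argument, is the recognition of the reflection identity (iii), namely that the even Bernoulli polynomial $B_2$ is invariant under $q\mapsto 1-q$. This is exactly what collapses the apparent asymmetry between the all-$J$ expression $M_+$ and the mixed $I/J$ expression $M_-$: it both converts the third-slot $J$'s into $I$'s and permutes the middle-slot $J$'s. Beyond that, the only bookkeeping risk is keeping the six coefficient pairs aligned while reordering arguments, and the symmetries (i) and (ii) control this automatically.
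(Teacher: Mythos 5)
Your proof is correct and follows essentially the same route as the paper: the paper's Lemma \ref{lemma6} is exactly your pair of identities $J(z,z',2)=I(z,z',2)$ and $J(z,2,z')=J(z',2,z)$, both resting on the reflection $\barzeta(2,q)=\barzeta(2,1-q)$ from the evenness of $B_2$, after which the paper invokes the same term-by-term matching (left there as ``a direct calculation'') that you write out explicitly, and the regularity of $T(a,\eps,c)$ follows as you say from $\lambda(0)=\tfrac{1}{2\pi}$ cancelling the $\tfrac{1}{\eps(\eps+1)}$ prefactor against $LT=O(\eps)$.
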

\begin{proof}
The proof of this result begins with
\begin{eqnarray}
& & \label{A-form} \\
M_{+} & = & (1-a)(2-a)J(c,a-2,2) + 2(1-a)(1-c)J(c-1,a-1,2) \nonumber \\
 &+ & (1-c)(2-c)J(c-2,a,2) +
  +  (1-c)(2-c)J(c-2,2,a) \nonumber \\
  & - & 2(1-a)(1-c)J(c-1,2,a-1)+(1-a)(2-a)J(c,2,a-2), \nonumber 
  \end{eqnarray}
\noindent
and
\begin{eqnarray}
M_{-} & = & (1-a)(2-a)I(a-2,2,c) + 2(1-a)(1-c)I(a-1,2,c-1)  \nonumber \\
&+  & (1-c)(2-c)I(a,2,c-2) 
 +  (1-a)(2-a)J(a-2,2,c) \nonumber \\
 & - & 2(1-a)(1-c)J(a-1,2,c-1)+(1-c)(2-c)J(a,2,c-2). \nonumber 
  \end{eqnarray}

We now establish some relations for the integrals $I$ and $J$. These are 
then employed to show that 
$M_{+}=M_{-}$. 

\begin{Lem}
\label{lemma6}
The integrals $I$ and $J$ satisfy
\begin{equation}
J(z,z',2) = I(z,z',2) \mbox{ and } J(z,2,z') = J(z',2,z).
\end{equation}
\end{Lem}
\begin{proof}
Start with
\begin{equation}
J(z,z',2) = \int_{0}^{1} \barzeta(z,q) \barzeta(z',q) \barzeta(2,1-q) \, dq,
\end{equation}
\noindent
with $\barzeta(2,q)=\zeta(-1,q)$.
From $\zeta(1-k,q) = -B_{k}(q)/k$ and the symmetry of $B_{2}$ we obtain
\begin{equation}
\barzeta(2,1-q)=\zeta(-1,1-q) = \zeta(-1,q)=\barzeta(2,q). 
\end{equation}
\noindent
The first identity follows from there. A similar argument establishes the 
second one.
\end{proof}

A direct calculation using Lemma \ref{lemma6}, now shows 
that $M_{+} = M_{-}$ in (\ref{LT}).
\end{proof}

\medskip

\noindent
{\bf Calculation of $M_{+}$}.  The 
next step is to simplify the expression 
for $M_{+}$ given in (\ref{A-form}).

\begin{Prop}
Let $a>2, \, c>2 \in \mathbb{R}$. Then
\begin{equation}
M_{+} = 2 \left( \barzeta(a) \barzeta(c) - \frac{\barzeta(a+c) B(a,c)}{1 - \tan ( \pi a/2)  \tan ( \pi c/2) } \right).
\label{mplus}
\end{equation}
\end{Prop}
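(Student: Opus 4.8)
The plan is to notice that the six integrals in (\ref{A-form}) assemble into two integrals of $\barzeta(2,q)$ against a second derivative, after which integration by parts reduces $M_+$ to boundary values plus two elementary quadratic integrals. First I would use Lemma \ref{lemma6} to replace $\barzeta(2,1-q)$ by $\barzeta(2,q)$ in the first three terms. Setting $f(q):=\barzeta(a,q)\barzeta(c,q)$ and $h(q):=\barzeta(c,q)\barzeta(a,1-q)$ and differentiating twice with $\tfrac{d}{dq}\barzeta(z,q)=(z-1)\barzeta(z-1,q)$, one finds
\begin{multline*}
f''(q)=(1-a)(2-a)\barzeta(a-2,q)\barzeta(c,q)+2(1-a)(1-c)\barzeta(a-1,q)\barzeta(c-1,q)\\
+(1-c)(2-c)\barzeta(a,q)\barzeta(c-2,q),
\end{multline*}
together with a companion formula for $h''(q)$ in which the chain rule applied to $\barzeta(a,1-q)$ produces the sign pattern of terms 4--6. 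Since the coefficients $(1-a)(2-a)$, $2(1-a)(1-c)$, $(1-c)(2-c)$ are precisely those in (\ref{A-form}), the six terms collapse to
\begin{equation*}
M_+=\ione \barzeta(2,q)\,f''(q)\,dq+\ione \barzeta(2,q)\,h''(q)\,dq .
\end{equation*}
Recognizing this second-derivative structure and checking that every coefficient and argument in (\ref{A-form}) matches is the step I expect to require the most care.

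I would then integrate by parts twice in each integral, using $\tfrac{d}{dq}\barzeta(2,q)=\barzeta(1,q)$ and $\tfrac{d}{dq}\barzeta(1,q)=-1$ (since $\barzeta(1,q)=-B_1(q)=\tfrac12-q$), giving
\begin{equation*}
\ione \barzeta(2,q)f''(q)\,dq=\big[\barzeta(2,q)f'(q)-\barzeta(1,q)f(q)\big]_0^1-\ione f(q)\,dq,
\end{equation*}
and similarly for $h$. The boundary identity \eqref{bt-zeta} and the values $\barzeta(z,0)=\barzeta(z,1)=\barzeta(z)$ for $z>1$ (valid here since $a,c>2$ keeps even the shifted arguments $a-1,c-1$ above $1$) annihilate the $\barzeta(2,q)f'$ bracket and evaluate the $\barzeta(1,q)f$ bracket to $-\barzeta(a)\barzeta(c)$; the same holds for $h$, because $h(0)=h(1)=\barzeta(a)\barzeta(c)$. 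Collecting, $M_+=2\,\barzeta(a)\barzeta(c)-(P+R)$, where $P:=\ione\barzeta(a,q)\barzeta(c,q)\,dq$ and $R:=\ione\barzeta(a,1-q)\barzeta(c,q)\,dq$.

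It remains to evaluate $P$ and $R$. Into each factor I would insert the Hurwitz--Fourier representation
\begin{equation*}
\barzeta(z,q)=\zeta(1-z,q)=\frac{2\Gamma(z)}{(2\pi)^z}\sum_{n\ge1}n^{-z}\Big(\cos\tfrac{\pi z}{2}\cos2\pi nq+\sin\tfrac{\pi z}{2}\sin2\pi nq\Big),
\end{equation*}
and integrate term by term, the orthogonality of $\{\cos2\pi nq,\sin2\pi nq\}$ on $[0,1]$ leaving only the diagonal contributions. This yields $P=\tfrac{2\Gamma(a)\Gamma(c)}{(2\pi)^{a+c}}\zeta(a+c)\cos\tfrac{\pi(a-c)}{2}$; since $q\mapsto 1-q$ reverses the sign of the sine terms, the same computation gives $R=\tfrac{2\Gamma(a)\Gamma(c)}{(2\pi)^{a+c}}\zeta(a+c)\cos\tfrac{\pi(a+c)}{2}$. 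Adding them and using $\cos\tfrac{\pi(a-c)}{2}+\cos\tfrac{\pi(a+c)}{2}=2\cos\tfrac{\pi a}{2}\cos\tfrac{\pi c}{2}$ produces $P+R=\tfrac{4\Gamma(a)\Gamma(c)}{(2\pi)^{a+c}}\zeta(a+c)\cos\tfrac{\pi a}{2}\cos\tfrac{\pi c}{2}$.

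Finally I would pass from $\zeta(a+c)$ to $\barzeta(a+c)=\zeta(1-a-c)$ via the functional equation $\zeta(1-s)=2(2\pi)^{-s}\cos\tfrac{\pi s}{2}\,\Gamma(s)\,\zeta(s)$ at $s=a+c$. This turns $\Gamma(a)\Gamma(c)/\Gamma(a+c)$ into $B(a,c)$ and, after dividing numerator and denominator by $\cos\tfrac{\pi a}{2}\cos\tfrac{\pi c}{2}$, converts $\cos\tfrac{\pi a}{2}\cos\tfrac{\pi c}{2}/\cos\tfrac{\pi(a+c)}{2}$ into $(1-\tan\tfrac{\pi a}{2}\tan\tfrac{\pi c}{2})^{-1}$, so that $P+R=2\barzeta(a+c)B(a,c)/(1-\tan\tfrac{\pi a}{2}\tan\tfrac{\pi c}{2})$ and (\ref{mplus}) follows. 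Apart from the regrouping, the only analytic point to justify is the term-by-term integration of the Hurwitz--Fourier series, which is legitimate because $a,c>2$ makes the resulting double series absolutely convergent.
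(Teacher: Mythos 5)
Your proof is correct, and it is worth comparing with the paper's, since you reach the same key intermediate identity by a genuinely cleaner route and then finish in a self-contained way where the paper cites earlier work. Both arguments pass through the formula $M_{+} = 2\barzeta(a)\barzeta(c) - \ione \barzeta(a,q)\barzeta(c,q)\,dq - \ione \barzeta(a,q)\barzeta(c,1-q)\,dq$, which is the paper's equation (\ref{form-A3}). The paper gets there in several stages: it integrates by parts term by term to rewrite the six summands of (\ref{A-form}) as integrals in which one parameter equals $1$ (Lemma \ref{form-A1}), and then applies further lemmas and integrations by parts to each of those. Your observation that, after applying Lemma \ref{lemma6} to the first three terms, the coefficient pattern $(1-a)(2-a)$, $2(1-a)(1-c)$, $(1-c)(2-c)$ is exactly that of the second derivatives $f''$ and $h''$ of $f(q)=\barzeta(a,q)\barzeta(c,q)$ and $h(q)=\barzeta(c,q)\barzeta(a,1-q)$ collapses all of this into a single double integration by parts against $\barzeta(2,q)$, with the boundary bookkeeping you describe (which I have checked: $f'$, $h'$ only involve arguments exceeding $1$, so \eqref{bt-zeta} kills the first bracket, and $\barzeta(1,q)=\tfrac12 -q$ makes the second bracket produce $\barzeta(a)\barzeta(c)$ each time). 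For the two remaining integrals, the paper simply quotes Theorem 3.1 of \cite{espmoll1}, whereas you rederive those evaluations from Hurwitz's Fourier representation and orthogonality, and then pass through the functional equation of $\zeta$ to convert $\zeta(a+c)$ into $\barzeta(a+c)B(a,c)$; the resulting values agree exactly with the cited ones, and the term-by-term integration is justified as you say by absolute and uniform convergence for $a,c>2$. What your route buys is brevity and self-containment — one structural identity plus classical Fourier analysis replaces a chain of lemmas and an external citation; what the paper's route buys is that its intermediate objects (the parameter-$1$ integrals $I(\cdot,\cdot,1)$, $J(\cdot,1,\cdot)$ and the quoted integrals of products of Hurwitz zeta functions) are reused elsewhere in the paper's computation of $N_{+}$ and $N_{-}$, so the authors get those tools essentially for free.
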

\begin{proof}
Start by producing an alternative form of the term
$J(c,a-2,2)$ given by
\begin{eqnarray}
J(c,a-2,2) & = &  \int_{0}^{1} \barzeta(c,q) \barzeta(a-2,q) \barzeta(2,1-q) \, dq \nonumber \\
 & = & \frac{1}{a-2} \int_{0}^{1} \barzeta(c,q) \barzeta(2,1-q) \frac{d}{dq} \barzeta(a-1,q) \, dq \nonumber \\
  & = & \frac{1}{2-a} \int_{0}^{1} \barzeta(a-1,q) \frac{d}{dq} \left[ \barzeta(c,q) \barzeta(2,1-q) \right] \, dq 
  \nonumber \\
 & = & \frac{1}{2-a} \int_{0}^{1} \barzeta(a-1,q) \left[ (c-1) 
\barzeta(c-1,q) \barzeta(2,1-q) \right. \nn\\
&& 
\phantom{\frac{1}{2-a} \int_{0}^{1} \barzeta(a-1,q)(c-1) 
\barzeta(c-1,q)}\left. + \barzeta(c,q) \barzeta(1,1-q) \right] \, dq.
\nonumber
\end{eqnarray}
\noindent
Now recall that
\begin{equation*}
\barzeta(1,q)=-\barzeta(1,1-q),
\end{equation*}
since $\barzeta(1,q) = -B_{1}(q) = \frac{1}{2} - q$,
and also 
$$
\barzeta(2,q)=\barzeta(2,1-q),
$$
as shown before.  Conclude that
\begin{equation}
(a-2) J(c,a-2) = (1-c) I(a-1,c-1,2) + I(a-1,c,1).
\end{equation}
\noindent
Reversing the order of $a$ and $c$ and considering the expression 
in (\ref{A-form}), the first 
three terms of $M_{+}$ in (\ref{A-form})  are reduced to  
 \begin{equation}
(1-a) I(a-1,c,1) + (1-c) I(c-1,a,1).
 \end{equation}
 \noindent
 A similar analysis gives an expression for the last three terms 
in (\ref{A-form}). This yields a 
formula for $M_{+}$ in terms of the integrals $I$ and $J$ where 
one of the parameters is $1$. 
 
 \begin{Lem}
 \label{form-A1}
 The term $M_{+}$ in (\ref{A-form}) is given by
 \begin{equation}
M_{+} = (1-a) I(a-1,c,1) + (1-c) I(c-1,a,1) + (1-a)J(a-1,1,c) -(1-c)J(a,1,c-1).
\nonumber
 \end{equation}
 \end{Lem}
 
 We now consider the integrals appearing in this expression for $M_{+}$. 
 
 \begin{Lem}
 The integral $J$ satisfies 
 \begin{equation}
 J(u,1,v) = -J(v,1,u),
 \end{equation}
 \end{Lem}
 \begin{proof}
 This comes directly from $\barzeta(1,1-q) = -\barzeta(1,q)$.
 \end{proof}
 
 The  expression for $M_{+}$ in Lemma \ref{form-A1} is now simplified.
 
 \begin{Lem}
 Let $a>2, \, c>2 \in \mathbb{R}$. Then
 \begin{equation}
 (1-a) I(a-1,c,1) + (1-c)I(a,c-1,1) = \barzeta(a) \barzeta(c) - \int_{0}^{1} \barzeta(a,q) \barzeta(c,q) \, dq
 \nonumber
 \end{equation}
 \noindent
 and 
 \begin{equation}
 (1-a) J(a-1,1,c) -(1-c)J(a,1,c-1) = \barzeta(a) \barzeta(c) - \int_{0}^{1} \barzeta(a,q) \barzeta(c,1-q) \, dq.
 \nonumber
 \end{equation}
 \noindent
 Therefore
 \begin{equation}
 \label{form-A3}
 M_{+} = 2 \barzeta(a)\barzeta(c) - \int_{0}^{1} \barzeta(a,q) \barzeta(c,q) \, dq- \int_{0}^{1} \barzeta(a,q) \barzeta(c,1-q) \, dq.
 \end{equation}
 \end{Lem}
 \begin{proof}
 Start with
 \begin{equation}
 (1-a) I(a-1,c,1) = (1-a) \int_{0}^{1} \barzeta(a-1,q) \barzeta(c,q) \barzeta(1,q) \, dq  \nonumber
 \end{equation}
 \noindent
 and  the identity $(a-1) \barzeta(a-1,q) = \frac{d}{dq} \barzeta(a,q)$. Integration by parts and the value $\barzeta(1,q) = \frac{1}{2} - q$ give the first identity. The rest of the formulas are established in a similar form.
 \end{proof}

The identites 
\begin{equation}
\int_{0}^{1} \zeta(1-a,q) \zeta(1-c,q) \, dq = \zeta(1-a-c) B(a,c) \frac{\cos( \tfrac{\pi}{2}(a-c))}{\cos( \tfrac{\pi}{2}(a+c))},
\nonumber
\end{equation}
\noindent
and
\begin{equation}
\int_{0}^{1} \zeta(1-a,q) \zeta(1-c,1-q) \, dq = \zeta(1-a-c) B(a,c) \nonumber
\end{equation}
\noindent
appear in Theorem $3.1$ in \cite{espmoll1}. Replacing in (\ref{form-A3}) 
produces the final expression for $M_{+}$ claimed in (\ref{mplus}). 
\end{proof}

\vskip 0.5in

\noindent
{\bf Calculation of $N_{+}$}. This is defined in (\ref{LT}) as the term of 
order $\eps$ in the expansion of $H_{+}(a,\eps,c)$ defined in \eqref{H+ def}.
We illustrate the general method of calculation by computing the term of 
order $\eps$ in the integral: 
\begin{equation*}
J(c,a-2,\eps+2) = \int_{0}^{1} \barzeta(c,q) \barzeta(a-2,q) \zeta(-1-\eps, 1-q) \,dq. 
\end{equation*}
\noindent
The term of order $\eps$ is 
\begin{multline*}
- \int_{0}^{1} \barzeta(c,q) \barzeta(a-2,q) \zeta'(-1,1-q) \, dq \\
= -\frac{1}{a-2} \int_{0}^{1} \left(\frac{d}{dq} \barzeta(a-1,q)\right) \barzeta(c,q) \zeta'(-1,1-q) \, dq.
\end{multline*}
\noindent
Integrate by parts and observe that the boundary terms vanish to see that 
the term of order $\eps$ is
\begin{equation*}
\frac{1}{a-2} \int_{0}^{1} \barzeta(a-1,q) \frac{d}{dq} \left[ \barzeta(c,q) 
\zeta'(-1,1-q) \right]\, dq.
\end{equation*}
\noindent
Thus, 
\begin{equation*}
(a-2) \times \mbox{ the term of order  }\eps \mbox{ in } J(a-2,c,\eps+2)
\end{equation*}
\noindent
is
\begin{multline*}
(c-1)\int_{0}^{1} \barzeta(c-1,q) \barzeta(a-1,q) \zeta'(-1,1-q) \, dq \\
- \int_{0}^{1} \barzeta(c,q) \barzeta(a-1,q)  
\frac{d}{du} \zeta'(-1,u) \Big{|}_{u=1-q} \, dq.
\end{multline*}
\noindent
Now use
\begin{equation*}
\frac{d}{du}  \zeta'(-1,u) = \frac{\partial}{\partial z} \Big{|}_{z=-1} \frac{\partial}{\partial u} \zeta(z,u) =  \frac{\partial}{\partial z} \Big{|}_{z=-1} ( -z \zeta(z+1,u) )  =  -\zeta(0,u) + \zeta'(0,u),
\end{equation*}
\noindent
to check that the contribution coming from $\zeta'(-1,1-q)$ vanishes and 
to verify the result described next. 

\begin{Lem}
The part of $N_{+}$ coming  from integrals with  
$\eps+2$ in the third variable  is
\begin{equation*}
\begin{split}
-(1-a)& \left[ \int_{0}^{1} \barzeta(c,q) \barzeta(a-1,q) \zeta(0,1-q) \, dq -  \int_{0}^{1} \barzeta(c,q) \barzeta(a-1,q) \zeta'(0,1-q) \, dq   \right] \cr
-(1-c)& \left[ \int_{0}^{1} \barzeta(a,q) \barzeta(c-1,q) \zeta(0,1-q) \, dq -  \int_{0}^{1} \barzeta(a,q) \barzeta(c-1,q) \zeta'(0,1-q) \, dq \right].
\end{split}
\end{equation*}
\end{Lem}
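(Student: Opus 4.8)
The plan is to expand each of the three terms of $H_+(a,\eps,c)$ that carry $\eps+2$ in the third argument, namely $(1-a)(2-a)J(c,a-2,\eps+2)$, $2(1-a)(1-c)J(c-1,a-1,\eps+2)$ and $(1-c)(2-c)J(c-2,a,\eps+2)$, to first order in $\eps$ and then to add the resulting contributions. Writing $J(u,v,\eps+2) = \int_0^1 \barzeta(u,q)\barzeta(v,q)\,\zeta(-1-\eps,1-q)\,dq$ and using $\zeta(-1-\eps,1-q) = \zeta(-1,1-q) - \eps\,\zeta'(-1,1-q) + O(\eps^2)$, the coefficient of $\eps$ in each such $J$ is $-\int_0^1 \barzeta(u,q)\barzeta(v,q)\,\zeta'(-1,1-q)\,dq$, exactly the computation already carried out in the text for $J(c,a-2,\eps+2)$.

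I would first dispose of the middle term $J(c-1,a-1,\eps+2)$: both of its first two arguments are already lowered, so its order-$\eps$ part is simply $-\int_0^1 \barzeta(c-1,q)\barzeta(a-1,q)\,\zeta'(-1,1-q)\,dq$ and needs no further manipulation. For the two outer terms I would lower the shifted argument by one via $\frac{d}{dq}\barzeta(a-1,q) = (a-2)\barzeta(a-2,q)$, so that $\barzeta(a-2,q) = \frac{1}{a-2}\frac{d}{dq}\barzeta(a-1,q)$, and then integrate by parts; the boundary terms vanish under the standing hypotheses $a,c>2$ exactly as in \eqref{bt-zeta} (together with the analogous periodicity $\zeta'(-1,0)=\zeta'(-1,1)$). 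Differentiating the product inside the integral splits the order-$\eps$ part of $J(c,a-2,\eps+2)$ into a piece carrying $\barzeta(c-1,q)\,\zeta'(-1,1-q)$ and a piece carrying $\barzeta(c,q)\,\frac{d}{dq}\zeta'(-1,1-q)$, and the same calculation with $a$ and $c$ interchanged handles $J(c-2,a,\eps+2)$.

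The computational heart of the argument is the evaluation of $\frac{d}{du}\zeta'(-1,u)$. Applying the Hurwitz recurrence $\partial_u\zeta(z,u) = -z\,\zeta(z+1,u)$ and differentiating with respect to $z$ at $z=-1$ gives $\frac{d}{du}\zeta'(-1,u) = -\zeta(0,u) + \zeta'(0,u)$, which is precisely what converts the $\frac{d}{dq}\zeta'(-1,1-q)$ pieces of the two outer terms into the integrals against $\zeta(0,1-q)$ and $\zeta'(0,1-q)$ that appear in the claim.

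The step I expect to be the main obstacle — and really the point of the whole lemma — is the bookkeeping that makes the $\zeta'(-1,1-q)$ integrals disappear. After integration by parts, each of the two outer terms contributes $(1-a)(1-c)\int_0^1 \barzeta(a-1,q)\barzeta(c-1,q)\,\zeta'(-1,1-q)\,dq$ (the sign coming from $(2-a)/(a-2) = -1$ combined with $(c-1) = -(1-c)$), while the middle term contributes the same integral with coefficient $-2(1-a)(1-c)$. Since $1 - 2 + 1 = 0$, all three $\zeta'(-1,1-q)$ contributions cancel, and what remains is exactly the sum of the two bracketed expressions in $\zeta(0,1-q)$ and $\zeta'(0,1-q)$, with prefactors $-(1-a)$ and $-(1-c)$, asserted in the statement.
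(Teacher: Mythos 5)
Your proposal is correct and follows essentially the same route as the paper: extract the order-$\eps$ coefficient $-\int_0^1\barzeta(u,q)\barzeta(v,q)\zeta'(-1,1-q)\,dq$ from each of the three relevant terms of $H_+$, lower the shifted argument and integrate by parts on the two outer terms, apply $\frac{d}{du}\zeta'(-1,u)=-\zeta(0,u)+\zeta'(0,u)$, and observe that the $\zeta'(-1,1-q)$ contributions cancel via $1-2+1=0$. The paper only carries this out explicitly for $J(c,a-2,\eps+2)$ and leaves the rest to the reader, so your fully worked bookkeeping (which checks out, including the sign trail yielding the coefficient $(1-a)(1-c)$ for each outer term) is a faithful completion of the same argument.
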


Similar calculations produce the other parts of $N_{+}$.  We spare the 
reader the details. 

\begin{Prop}
The term $N_{+}$ is given by 
\begin{equation*}
\begin{split}
 2 \barzeta(a) \barzeta(c) &- \int_{0}^{1} \barzeta(a,q) \barzeta(c,q) \, dq - \int_{0}^{1} \barzeta(a,1-q) \barzeta(c,q) \, dq  \cr
 &+ (1-a)\left[  \int_{0}^{1} \barzeta(c,q) \barzeta(a-1,q) \zeta'(0,1-q) \, dq \right.\cr
 & \phantom{(1-a)xxxxxxxx} \left. +\int_{0}^{1} \barzeta(c,1-q) \barzeta(a-1,q) \zeta'(0,1-q) \, dq  \right]  \cr
 &+ (1-c)\left[  \int_{0}^{1} \barzeta(a,q) \barzeta(c-1,q) \zeta'(0,1-q) \, dq \right.\cr
 & \phantom{(1-c)xxxxxxxx}\left.-\int_{0}^{1} \barzeta(a,1-q) \barzeta(c-1,q) \zeta'(0,q) \, dq \right].
\end{split}
\end{equation*}
\end{Prop}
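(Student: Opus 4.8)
The plan is to extract the coefficient of $\eps$ from each of the six integrals in $H_{+}(a,\eps,c)$ defined in \eqref{H+ def}, exactly as illustrated for $J(c,a-2,\eps+2)$ in the computation preceding the statement. Since the prefactors $(1-a)(2-a)$, etc., carry no $\eps$-dependence, the only source of an order-$\eps$ contribution is the factor $\barzeta(\eps+2,\cdot) = \zeta(-1-\eps,\cdot)$, whose expansion $\zeta(-1-\eps,u) = \zeta(-1,u) - \eps\,\zeta'(-1,u) + O(\eps^{2})$ shows that the term of order $\eps$ is produced by the replacement $\barzeta(\eps+2,u)\mapsto -\zeta'(-1,u)$. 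I would split the six terms into the three carrying $\eps+2$ in the \emph{third} argument (these give the contribution already recorded in the preceding Lemma, with integrands involving $\zeta(0,1-q)$ and $\zeta'(0,1-q)$) and the three carrying $\eps+2$ in the \emph{second} argument, which remain to be analyzed.

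For the latter three terms I would repeat the same mechanism: in each integral the lowered Bernoulli index appears as $\barzeta(c-2,q)$ or $\barzeta(a-2,1-q)$, and I would eliminate it by integration by parts, using $\frac{d}{dq}\barzeta(a-1,q) = (a-2)\barzeta(a-2,q)$ together with the chain rule $\frac{d}{dq}\barzeta(a-1,1-q) = -(a-2)\barzeta(a-2,1-q)$ for the shifted argument. After the boundary terms are discarded (they vanish for $a,c>2$ by \eqref{bt-zeta}), the derivative lands on $\zeta'(-1,u)$, where the key identity $\frac{d}{du}\zeta'(-1,u) = -\zeta(0,u)+\zeta'(0,u)$ converts it into $\zeta(0,\cdot)$ and $\zeta'(0,\cdot)$ pieces. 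The decisive structural observation is that, within each group of three, the explicit $\zeta'(-1,\cdot)$ contributions occur with numerical weights in the ratio $1:-2:1$ and therefore cancel identically; this is the analogue of the cancellation already used to establish the Lemma, and it is what makes $N_{+}$ expressible purely through $\zeta(0,\cdot)$ and $\zeta'(0,\cdot)$.

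It then remains to collect the two surviving types of contributions. Since $\zeta(0,u) = \tfrac12 - u = \barzeta(1,u)$, every $\zeta(0,\cdot)$ term is an integral of the type $I(\cdot,\cdot,1)$ or $J(\cdot,1,\cdot)$ already met in the calculation of $M_{+}$; after using $J(u,1,v) = -J(v,1,u)$ and the relation $J(z,z',1) = -I(z,z',1)$ to bring them to standard form, I would invoke the two reductions established there, namely $(1-a)I(a-1,c,1)+(1-c)I(a,c-1,1) = \barzeta(a)\barzeta(c) - \ione\barzeta(a,q)\barzeta(c,q)\,dq$ and $(1-a)J(a-1,1,c)-(1-c)J(a,1,c-1) = \barzeta(a)\barzeta(c) - \ione\barzeta(a,q)\barzeta(c,1-q)\,dq$, whose right-hand sides combine into $2\barzeta(a)\barzeta(c) - \ione\barzeta(a,q)\barzeta(c,q)\,dq - \ione\barzeta(a,1-q)\barzeta(c,q)\,dq$ once the substitution $q\mapsto 1-q$ is applied to the second integral. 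The $\zeta'(0,\cdot)$ terms require no further reduction; applying the same $q\mapsto 1-q$ relabeling to the group containing $\barzeta(a-1,1-q)$ or $\barzeta(c,1-q)$ matches them termwise to the four integrals displayed in the statement.

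I expect the main obstacle to be organizational rather than conceptual: keeping precise track of the chain-rule signs generated by the shifted argument $1-q$, of the signs coming from $\barzeta(1,1-q) = -\barzeta(1,q)$, and of the change of prefactors from $(1-a)(2-a)$ to $(1-a)$ after each integration by parts, so that the $\zeta'(-1,\cdot)$ cancellation is exact and the $\zeta(0,\cdot)$ pieces reassemble into precisely the two $M_{+}$-identities. Once that cancellation is verified, the remaining steps are the routine substitutions described above.
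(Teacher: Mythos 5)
Your proposal is correct and follows essentially the same route as the paper: extraction of the order-$\eps$ coefficient via $\barzeta(\eps+2,u)\mapsto-\zeta'(-1,u)$, integration by parts with vanishing boundary terms, the identity $\tfrac{d}{du}\zeta'(-1,u)=-\zeta(0,u)+\zeta'(0,u)$, the $1:-2:1$ cancellation of the residual $\zeta'(-1,\cdot)$ contributions within each group of three, and reduction of the $\zeta(0,\cdot)$ pieces through the lemmas already established in the $M_{+}$ calculation. The paper works out only the group with $\eps+2$ in the third variable and explicitly "spares the reader the details" of the rest; your outline supplies precisely those omitted steps, with the correct signs and identities.
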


The process is now repeated to produce a similar expression for $N_{-}$. The 
result is stated next. 

\begin{Prop}
\label{Prop-212}
The difference $N_{+} - N_{-}$ is given by 
\begin{align}
N_{+}-N_{-}& = (1-a) \int_{0}^{1} \barzeta(a-1,q) \barzeta(c,q) \left[ \zeta'(0,q) + \zeta'(0,1-q) \right] \, dq \nonumber \cr
& + (1-a) \int_{0}^{1} \barzeta(a-1,q) \barzeta(c,1-q) \left[ \zeta'(0,q) + \zeta'(0,1-q) \right] \, dq \nonumber \cr
& + (1-c) \int_{0}^{1} \barzeta(a,q) \barzeta(c-1,q) \left[ \zeta'(0,q) + \zeta'(0,1-q) \right] \, dq \nonumber \cr
& - (1-c) \int_{0}^{1} \barzeta(a,q) \barzeta(c-1,1-q) \left[ \zeta'(0,q) + \zeta'(0,1-q) \right] \, dq. \nonumber 
\end{align}
\end{Prop}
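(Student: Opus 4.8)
The plan is to compute $N_{-}$ by exactly the method already used for $N_{+}$ and then subtract. Recall that $N_{-}$ is the coefficient of $\eps$ in the expansion of $H_{-}(a,\eps,c)$ in \eqref{H- def}. The one structural difference from $H_{+}$ is that in all six terms of $H_{-}$ the shifted parameter $\eps+2$ sits in the \emph{second} slot, so the integrand always contains the factor $\barzeta(2+\eps,q)=\zeta(-1-\eps,q)$ with argument $q$ rather than $1-q$. Expanding $\zeta(-1-\eps,q)=\zeta(-1,q)-\eps\,\zeta'(-1,q)+O(\eps^{2})$, the order-$\eps$ part of each term is obtained by replacing that factor with $-\zeta'(-1,q)$; this is the exact analogue of the computation for $N_{+}$, where the relevant factor was $\zeta(-1-\eps,1-q)$ and the order-$\eps$ part involved $\zeta'(-1,1-q)$.

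For each of the six terms I would proceed as in the model calculation: use $\frac{d}{dq}\barzeta(z,q)=(z-1)\barzeta(z-1,q)$ to write $\barzeta(a-2,q)$ (resp.\ $\barzeta(c-2,q)$) as a total derivative, integrate by parts --- the boundary terms vanishing because $a,c>2$, by \eqref{bt-zeta} --- and then apply the identity
\begin{equation*}
\frac{d}{du}\zeta'(-1,u)=-\zeta(0,u)+\zeta'(0,u).
\end{equation*}
Since $\zeta(0,u)=\barzeta(1,u)=\tfrac12-u$ is elementary, the $\zeta(0,\cdot)$ pieces integrate to the same type of algebraic expression already present in $N_{+}$, while the residual factors $\zeta'(-1,\cdot)$ produced by the product rule cancel among the six terms, exactly as the $\zeta'(-1,1-q)$ contribution was seen to drop out in the derivation of $N_{+}$. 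The result is that $N_{-}$ has the same shape as $N_{+}$ --- an algebraic part plus integrals carrying $\zeta'(0,\cdot)$ --- the only difference being that $N_{-}$ produces $\zeta'(0,q)$ wherever $N_{+}$ produced $\zeta'(0,1-q)$.

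Taking the difference $N_{+}-N_{-}$, the algebraic parts coincide and cancel, leaving only the $\zeta'(0,\cdot)$ integrals: those inherited from $N_{+}$ supply the $\zeta'(0,1-q)$ pieces and those from $-N_{-}$ supply the $\zeta'(0,q)$ pieces, and after the substitution $q\mapsto 1-q$ in the integrals whose $q$- and $(1-q)$-arguments are asymmetrically distributed, each pair merges into a single integral with the symmetric factor $\zeta'(0,q)+\zeta'(0,1-q)$. This is precisely the four-integral expression asserted in the statement. The main obstacle is purely organizational: one must keep track of the six terms in each of \eqref{H+ def} and \eqref{H- def}, of the signs introduced by the successive integrations by parts together with the coefficients $(1-a)(2-a)$, $2(1-a)(1-c)$ and $(1-c)(2-c)$, and then verify that the spurious $\zeta'(-1,\cdot)$ contributions cancel, that the algebraic parts of $N_{+}$ and $N_{-}$ agree exactly, and that the surviving integrals pair up correctly under $q\mapsto 1-q$. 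No analytic ingredient beyond the two derivative identities and the boundary relation \eqref{bt-zeta} is required.
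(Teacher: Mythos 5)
Your proposal follows essentially the same route as the paper: extract the order-$\eps$ coefficient of each term of $H_{-}$ by integration by parts, use $\tfrac{d}{du}\zeta'(-1,u)=-\zeta(0,u)+\zeta'(0,u)$ with vanishing boundary terms, observe that the residual $\zeta'(-1,\cdot)$ contributions cancel and that the algebraic parts of $N_{+}$ and $N_{-}$ agree (hence drop in the difference), and finally merge the surviving $\zeta'(0,\cdot)$ integrals under $q\mapsto 1-q$ into the symmetric combination $\zeta'(0,q)+\zeta'(0,1-q)$. This is precisely the paper's (largely implicit) argument, carried out at a comparable level of detail.
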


The difference $N_{+}-N_{-}$ is now given in the form stated in Theorem 
\ref{Thm:T(a,0,c)}. Recall that
that $\zeta'(0,q) = \log \Gamma(q) - \log \sqrt{2 \pi}$, so that 
\begin{equation}
\zeta'(0,q) + \zeta'(0,1-q) = \log \left[ \Gamma(q) \Gamma(1-q) \right] 
- \log 2 \pi   = - \log (2 \sin \pi q ), \label{gamma-term}
\end{equation}
\noindent
where we have used the reflection rule for the Gamma function
\begin{equation}
\Gamma(q) \Gamma(1-q) = \frac{\pi}{\sin \pi q}.
\end{equation}
\noindent
Integration by parts gives
\begin{align*}
(1-a) \int_{0}^{1} \barzeta(a-1,q) \barzeta(c,q) \, dq &= -(1-c) \int_{0}^{1} \barzeta(a,q) \barzeta(c-1,q) \, dq, \cr
\intertext{and} 
(1-a) \int_{0}^{1} \barzeta(a-1,q) \barzeta(c,1-q) \, dq &= (1-c) \int_{0}^{1} \barzeta(a,q) \barzeta(c-1,1-q) \, dq.
\end{align*}
\noindent
These identities show that in the expansion of $N_{+}-N_{-}$ in 
Proposition \ref{Prop-212}, we 
can ignore the contribution of the $q$-independent term 
$-\log 2$ in \eqref{gamma-term} and 
produce our final expression for the term $N_{+}-N_{-}$.

\begin{Prop}
The term $N_{+}-N_{-}$ in (\ref{LT}) is given by 
\begin{multline*}
 \int_{0}^{1} \frac{d}{dq} \left[ \barzeta(c,q) \left( \barzeta(a,q) - \barzeta(a,1-q) \right) \right]
\log \sin ( \pi q) \, dq  \cr
 =  - \pi \int_{0}^{1} \barzeta(c,q) \left[ \barzeta(a,q) - \barzeta(a,1-q) \right] \cot( \pi q) \, dq. 
\end{multline*}
\end{Prop}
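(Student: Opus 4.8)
The plan is to take the four-integral expression for $N_+-N_-$ from Proposition~\ref{Prop-212} and to recognize it, up to a harmless constant, as the integral of an exact derivative against $\log\sin\pi q$. First I would insert the identity \eqref{gamma-term}, writing $\zeta'(0,q)+\zeta'(0,1-q) = -\log 2 - \log\sin\pi q$ in each of the four integrals. This splits $N_+-N_-$ into a \emph{constant part} carrying the factor $-\log 2$ and a \emph{logarithmic part} carrying $-\log\sin\pi q$.

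For the constant part I would use the two integration-by-parts identities stated just before the proposition. Pairing the first integral with the third shows, via $(1-a)\ione\barzeta(a-1,q)\barzeta(c,q)\,dq = -(1-c)\ione\barzeta(a,q)\barzeta(c-1,q)\,dq$, that their $-\log 2$ contributions are negatives of one another; the companion identity, with $q\mapsto 1-q$ in the second Hurwitz factor, does the same for the second and fourth integrals. Hence the whole constant part vanishes and only the logarithmic part remains.

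Next I would reassemble the logarithmic part. Differentiating $F(q):=\barzeta(c,q)\bigl(\barzeta(a,q)-\barzeta(a,1-q)\bigr)$ with $\tfrac{d}{dq}\barzeta(z,q)=(z-1)\barzeta(z-1,q)$ yields four terms, two with coefficient $(a-1)$ and two with $(c-1)$. In the two surviving integrals of $N_+-N_-$ that contain $\barzeta(c,1-q)$ and $\barzeta(c-1,1-q)$ I would substitute $q\mapsto 1-q$; since $\sin\pi(1-q)=\sin\pi q$ the weight $\log\sin\pi q$ is unchanged, and the four integrands line up, with matching signs and coefficients, exactly with $F'(q)$. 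This identifies the logarithmic part with $\ione F'(q)\log\sin\pi q\,dq$, the first displayed form in the statement.

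Finally I would integrate by parts once more. The boundary term $\bigl[F(q)\log\sin\pi q\bigr]_0^1$ vanishes: by the reflection \eqref{bt-zeta} one has $\barzeta(a,0)=\barzeta(a,1)$, so $F(0)=F(1)=0$, and in fact $F(q)=O(q)$ near each endpoint, which dominates the logarithmic growth of $\log\sin\pi q$. Using $\tfrac{d}{dq}\log\sin\pi q=\pi\cot\pi q$ then gives $-\pi\ione\barzeta(c,q)\bigl[\barzeta(a,q)-\barzeta(a,1-q)\bigr]\cot\pi q\,dq$, the second form. The main obstacle is the third step: the sign-and-index bookkeeping needed to see the four separate integrals collapse into a single total derivative, together with the earlier check that the constant $-\log 2$ really cancels through the two auxiliary identities rather than merely being dropped.
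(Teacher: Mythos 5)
Your proposal is correct and follows essentially the same route as the paper: insert \eqref{gamma-term}, use the two integration-by-parts identities to cancel the $q$-independent $-\log 2$ contributions, collapse the surviving $\log\sin\pi q$ terms into the total derivative $\tfrac{d}{dq}\left[\barzeta(c,q)\left(\barzeta(a,q)-\barzeta(a,1-q)\right)\right]$, and integrate by parts once more. The paper leaves the reassembly and the vanishing of the boundary term implicit, whereas you spell them out (including the $F(q)=O(q)$ estimate that dominates the logarithmic singularity), but the underlying argument is identical.
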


Taking the limit as $\eps \to 0$ in (\ref{T-exp1}), we obtain 
the result described in Theorem  
\ref{Thm:T(a,0,c)}. 

\bigskip

\section{An expression for the Tornheim sum $T(m,0,n)$}
\label{S:T(m,0,n)}

In this section we analyze the behavior of 
$T(a,0,c)$ given in Theorem \ref{Thm:T(a,0,c)}
as the parameters $a$ and $c$ approach positive integer values.
The notation
$a = m + \varepsilon_{a}, \;
c = n + \varepsilon_{c}$
\no
with $m,n \in \mathbb{N},m,n\ge 2$ 
and $\varepsilon_{a}, \varepsilon_{a} \to 0$, is empoyed throughout. \\

Observe that $\lambda(z)$ is singular as $z$ becomes a positive 
integer, so in order to 
obtain the limiting value $T(m,0,n)$, it remains to consider 
the limiting behavior of $T(a,0,c)$.

Define
\begin{multline}
\label{L1-def}
L_1 (m,n): = \mathop {\lim }\limits_{a \to m} \mathop {\lim }\limits_{c \to n} 4\lambda (a)\lambda (c)
\sin \left( {\frac{{\pi a}}{2}} \right)\sin \left( {\frac{{\pi c}}{2}} \right)\\
\times\left\{ {\bar \zeta \left( a \right)\bar \zeta \left( c \right) - \frac{{\bar \zeta \left( {a + c} \right)B(a,c)}}
{{1 - \tan \left( {\frac{{\pi a}}{2}} \right)\tan \left( {\frac{{\pi c}}{2}} \right)}}} \right\}
\end{multline}
\no
and 
\begin{multline}
\label{L2-def}
L_2 (m,n): = -\mathop {\lim }\limits_{a \to m} \mathop {\lim }\limits_{c \to n} 
2\lambda (a)\lambda (c)\cos\left(\frac{\pi a}{2}\right)\sin\left(\frac{\pi c}{2}\right)\\
\times\int_0^1\left[\barzeta(a,q)-\barzeta(a,1-q)\right]\barzeta(c,q)\cot\pi q\,dq.
\end{multline}

\noindent
The expressions for $L_{1}(m,n)$ and $L_{2}(m,n)$ given in this section 
give Theorem \ref{Thm:T(m,0,n)} from Theorem \ref{Thm:T(a,0,c)}. \\

\noindent
{\bf The evaluation of $L_{1}(m,n)$.} This is elementary.  The simplification
employs the relations
\begin{align}
\zeta(2k) & = \frac{(-1)^{k+1} (2 \pi)^{2k} B_{2k}}{2(2k)!}, 
&& k \in \mathbb{N} \cup \{ 0 \}, \label{zeta-simp} \\
\zeta(1-k) & = \frac{(-1)^{k+1} B_{k}}{k},  
&& k \in \mathbb{N},  \nonumber \\
\intertext{and}
\zeta'(-2k) & = (-1)^{k} \frac{(2k)! \zeta(2k+1)}{2 (2 \pi)^{2k} }, 
&& k \in \mathbb{N}.
\nonumber
\end{align}

\begin{Prop}
\label{Lem:L1}
The function $L_{1}(m,n)$ defined in (\ref{L1-def}) is given by
\begin{equation}
L_1 (m,n) = \zeta(m) \zeta(n) - \frac{1}{2} \zeta(m+n).
\end{equation}
\end{Prop}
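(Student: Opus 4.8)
The plan is to simplify the bracketed expression defining $L_1(m,n)$ \emph{before} taking any limit, showing that for non-integer $a,c>2$ it collapses identically to the manifestly analytic function $\zeta(a)\zeta(c)-\tfrac12\zeta(a+c)$; the iterated limit then becomes mere evaluation at $a=m$, $c=n$. The tools are the functional equation of the Riemann zeta function, which in the continuous form underlying the integer relations \eqref{zeta-simp} reads $\zeta(1-z)=2(2\pi)^{-z}\cos(\tfrac{\pi z}{2})\Gamma(z)\zeta(z)$, together with the reflection formula $\Gamma(z)\Gamma(1-z)=\pi/\sin\pi z$ and the definition $\lambda(z)=\Gamma(1-z)/(2\pi)^{1-z}$.

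First I would treat the product term $4\lambda(a)\lambda(c)\sin(\tfrac{\pi a}{2})\sin(\tfrac{\pi c}{2})\,\barzeta(a)\barzeta(c)$. Applying the functional equation to $\barzeta(a)=\zeta(1-a)$ and then the reflection formula to $\Gamma(1-a)\Gamma(a)$, the powers of $2\pi$ collapse and $\lambda(a)\,\barzeta(a)$ becomes $\cos(\tfrac{\pi a}{2})\zeta(a)/\sin\pi a$; using $\sin\pi a=2\sin(\tfrac{\pi a}{2})\cos(\tfrac{\pi a}{2})$ this is exactly $\zeta(a)/\bigl(2\sin(\tfrac{\pi a}{2})\bigr)$. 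Hence $\lambda(a)\sin(\tfrac{\pi a}{2})\barzeta(a)=\tfrac12\zeta(a)$, and likewise for $c$, so the whole term equals $\zeta(a)\zeta(c)$ as an identity of meromorphic functions.

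Next I would treat the second term, writing it as $-4\bigl[\lambda(a)\lambda(c)B(a,c)\barzeta(a+c)\bigr]\cdot\frac{\sin(\frac{\pi a}{2})\sin(\frac{\pi c}{2})}{1-\tan(\frac{\pi a}{2})\tan(\frac{\pi c}{2})}$. For the trigonometric prefactor I would use $1-\tan(\tfrac{\pi a}{2})\tan(\tfrac{\pi c}{2})=\cos(\tfrac{\pi(a+c)}{2})/\bigl[\cos(\tfrac{\pi a}{2})\cos(\tfrac{\pi c}{2})\bigr]$, so that the prefactor becomes $\tfrac14\sin\pi a\,\sin\pi c/\cos(\tfrac{\pi(a+c)}{2})$. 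In parallel, applying reflection to each of $\Gamma(a)\Gamma(1-a)$ and $\Gamma(c)\Gamma(1-c)$ and the functional equation to $\zeta(1-a-c)/\Gamma(a+c)$ turns $\lambda(a)\lambda(c)B(a,c)\barzeta(a+c)$ into $\cos(\tfrac{\pi(a+c)}{2})\zeta(a+c)/(2\sin\pi a\,\sin\pi c)$. Multiplying the two pieces, the factors $\cos(\tfrac{\pi(a+c)}{2})$ and $\sin\pi a\,\sin\pi c$ cancel and the second term reduces to $-\tfrac12\zeta(a+c)$.

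Combining, the bracketed quantity equals $\zeta(a)\zeta(c)-\tfrac12\zeta(a+c)$ for all admissible non-integer $a,c$, an expression analytic at $(m,n)$ because $m,n\ge 2$ keep us away from the pole of $\zeta$ at $1$, so $c\to n$ followed by $a\to m$ yields the claim. The only delicate point, which I would stress, is that the individual factors $\lambda(a),\lambda(c)$ have poles at positive integers while $\barzeta(a),\sin(\tfrac{\pi a}{2})$ vanish there; the reductions above are identities that dissolve these apparent $0\cdot\infty$ indeterminacies, which is precisely what makes the limit unproblematic. Thus the right strategy is to simplify to $\zeta(a)\zeta(c)-\tfrac12\zeta(a+c)$ globally rather than to expand each singular factor in $\eps_a,\eps_c$ separately.
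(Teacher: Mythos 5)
Your proof is correct, and it takes a genuinely different route from the paper's. You apply the functional equation $\zeta(1-z)=2(2\pi)^{-z}\cos(\tfrac{\pi z}{2})\Gamma(z)\zeta(z)$ at \emph{non-integer} arguments, together with the reflection formula, to show that the entire bracketed expression in (\ref{L1-def}) equals $\zeta(a)\zeta(c)-\tfrac12\zeta(a+c)$ identically for admissible non-integer $a,c$; both of your intermediate reductions, $\lambda(a)\sin(\tfrac{\pi a}{2})\barzeta(a)=\tfrac12\zeta(a)$ and $4\lambda(a)\lambda(c)B(a,c)\barzeta(a+c)\cdot\frac{\sin(\frac{\pi a}{2})\sin(\frac{\pi c}{2})}{1-\tan(\frac{\pi a}{2})\tan(\frac{\pi c}{2})}=\tfrac12\zeta(a+c)$, check out, and the iterated limit then reduces to evaluation of a function continuous at $(m,n)$. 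The paper instead works locally: it Laurent-expands each factor ($\lambda$, $\sin$, $\tan$, $\barzeta$) around the integers, splits into four cases according to the parities of $m$ and $n$, and only at the end converts $\zeta(1-k)$, $\zeta'(1-k)$ and $B(m,n)\zeta(1-m-n)$ back to positive-argument zeta values via the special-value relations (\ref{zeta-simp}) --- which are themselves instances of the functional equation you invoke. Your argument is shorter, avoids the case analysis entirely, and makes transparent why the answer is independent of parity; the paper's expansion machinery, on the other hand, is not wasted effort in context, since the same $\eps$-expansions and parity bookkeeping are unavoidable for the companion limit $L_2(m,n)$, where the integral term admits no comparable closed-form collapse and Proposition \ref{Lem:L2} genuinely requires them. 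One small point worth stressing (you implicitly handle it): for fixed non-integer $a$ and $c$ near $n$, the quantity $a+c$ is not an odd integer, so the removable singularity of the term $\barzeta(a+c)B(a,c)/\bigl(1-\tan(\tfrac{\pi a}{2})\tan(\tfrac{\pi c}{2})\bigr)$ along $\cos(\tfrac{\pi(a+c)}{2})=0$ never interferes with the iterated limit.
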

\begin{proof}
The expansion of $\lambda(z)$ about $z = k\in \mathbb{N}$ is 
\ba
\label{lambda-expansion}
\lambda \left( {k + \varepsilon } \right) = ( - 1)^k \frac{{\left( {2\pi } \right)^{k - 1} }}
{{\Gamma \left( k \right)}}\left[ {\frac{1}
{\varepsilon } + \log 2\pi  - \psi \left( k \right) + O\left( \varepsilon  \right)} \right].
\ea
\no
We now examine the behavior of the other factors in (\ref{L1-def}) as
$\varepsilon_a,\varepsilon_c$ tend to zero. The result depends on the 
parities of $m$ and $n$.\\

To simplify our notation we define the {\em $4$-parity} of the integer $k$,
$p_{k}$, as
\begin{equation}
p_{k} := (-1)^{\lfloor{k/2 \rfloor}} = \begin{cases}
               (-1)^{k/2}, & \text{$k$ even}, \\
               (-1)^{(k-1)/2}, & \text{$k$ odd}, 
           \end{cases}
\label{4-parity}
\end{equation}

\no
Then, for $k$ even:

\begin{align}
\label{sin-expansion-n-even}
\sin \left( {\frac{\pi }{2}(k + \varepsilon )} \right) &= 
\frac{\pi }{2}p_{k} 
\varepsilon  + O\left( {\varepsilon ^2 } \right), \\
\tan \left( {\frac{\pi }{2}(k + \varepsilon )} \right) &= 
\frac{\pi }{2}\varepsilon + O\left( {\varepsilon ^2 }\right),\\
    &  \nn \\
\barzeta \left( {k + \varepsilon } \right) &=  
\zeta \left( 1-k \right)  + O\left( {\varepsilon} \right),
\intertext{and for $k$ odd:}
\label{sin-expansion-n-odd}
\sin \left( {\frac{\pi }{2}(k + \varepsilon )} \right) &= 
p_{k} + O\left( {\varepsilon } \right),  \\
\tan \left( {\frac{\pi }{2}(k + \varepsilon )} \right) &= 
-\frac{2}{\pi \varepsilon} + O\left( {\varepsilon ^0 }\right),  \\
    &  \nn \\
\barzeta \left( {k + \varepsilon } \right) &=  
- \zeta '\left( 1-k \right)\varepsilon  + O\left( {\varepsilon ^2 } \right),
\qquad k\ge 3,
\label{zeta-expansion-1}
\end{align}
\no
since $\zeta(-2k) = 0$ for $k\in\mathbb{N}$. \\

\noindent
Therefore
\begin{align*}
\intertext{(a) $m$ and $n$ even:}
L_1 (m,n) &= \frac{{\left( {2\pi } \right)^{m + n } }}
{{4 \Gamma \left( m \right)\Gamma \left( n \right)}}
 \left[ {\zeta \left( {1 - m} \right)\zeta \left( {1 - n} \right)-B(m,n)\zeta \left( {1 - m - n} \right)} \right]p_{m} p_{n}
\\
\intertext{(b) $m$ even and $n$ odd:}
L_1 (m,n) &= \frac{{\left( {2\pi } \right)^{m + n - 1} }}
{{\Gamma \left( m \right)\Gamma \left( n \right)}}
 \left[ { \zeta \left( {1 - m} \right)\zeta '\left( {1 - n} \right)
 - B(m,n)\zeta '\left( {1 - m - n} \right)} \right]p_{m}p_{n}
 \\
\intertext{(c) $m$ odd and $n$ even:}
L_1 (m,n) &= \frac{{\left( {2\pi } \right)^{m + n - 1} }}
{{\Gamma \left( m \right)\Gamma \left( n \right)}}
 \left[ { \zeta '\left( {1 - m} \right)\zeta \left( {1 - n} \right)
 - B(m,n)\zeta '\left( {1 - m - n} \right)} \right]p_{m}p_{n}
\\
\intertext{(d) $m$ and $n$ odd:}
L_1 (m,n) &= \frac{{\left( {2\pi } \right)^{m + n } }}
{{4 \Gamma \left( m \right)\Gamma \left( n \right)}}
 \left[ \frac{4}{\pi^{2}} \zeta '\left( {1 - m} \right)\zeta '\left( {1 - n} \right) +
B(m,n)\zeta \left( {1 - m - n} \right) \right] p_{m}p_{n}.
\end{align*}    

The result now follows by using the relations (\ref{zeta-simp}).
\end{proof}

\noindent
{\bf The evaluation of $L_{2}(m,n)$}. This proceeds 
along similar lines. We employ the expansion
\ba
\nn
\barzeta (k + \varepsilon ,q) 
&=& \barzeta (k,q) + \varepsilon \barzeta '(k,q) + o(\varepsilon )
\label{zetabar-expansion} \\
 &=&  - \frac{1}{k}\left[ {B_k (q) + \varepsilon A_k (q)} \right] + 
o(\varepsilon ), \nn
\ea
\no
and the reflection property of the 
Bernoulli polynomials,
\begin{equation}
\label{Bernoulli-reflection}
B_k (1 - q) = ( - 1)^k B_k (q),
\end{equation}
\no
to establish the vanishing of some of the integrals that appear in
intermediate calculations. The results are expressed in terms of the 
four integrals $I_{AA}, \, I_{AB}, \, I_{BB}, \, J_{AA}$ introduced in 
(\ref{basic}).

\medskip

\begin{Prop}
\label{Lem:L2}
The limit $L_2(m,n)$ defined in (\ref{L2-def}) is given by
\begin{equation}
L_2 (m,n) = p_{m+n}\frac{(2\pi)^{m+n-1}}{m!n!}\ell_2(m,n).
\end{equation}
Here $p_{n+m}$ is defined in (\ref{4-parity}) and the 
function $\ell_2(m,n)$ is given in terms of the following basic integrals,
\begin{align}
I_{BB}(k,l)  & : =  \int_{0}^{1} B_{k}(q) B_{l}(q) K(q) \, dq,
\label{basic-1} \\
I_{AB}(k,l)  & :=  \frac{1}{\pi}\int_{0}^{1} A_{k}(q) B_{l}(q) K(q) \, dq,
\\
I_{AA}(k,l) &  :=  \frac{1}{\pi^2}\int_{0}^{1} A_{k}(q) A_{l}(q) K(q) \, dq,
\\
J_{AA}(k,l) &  :=  \frac{1}{\pi^2}\int_{0}^{1} A_{k}(q) A_{l}(1-q) K(q) \, dq.
\end{align}
by
\begin{align}
\intertext{(a) $m$ and $n$ even:}
\ell_2 (m,n) &= m I_{AB}(m-1,n) + n I_{AB}(m,n-1),
\label{ell2 m even n even-0}
\\
\intertext{(b) $m$ and $n$ odd:}
\ell_2(m,n) &=  m I_{AB}(n,m-1) + n I_{AB}(n-1,m).
\label{ell2 m odd n odd-0}
\\
\intertext{(c) $m$ odd and $n$ even:}
\ell_2(m,n) &= \frac{1}{2} \left( m I_{BB}(m-1,n) + n I_{BB}(m,n-1)\right),
\label{ell2 m odd n even-0}
\\
\intertext{(d) $m$ even and $n$ odd:}
\ell_2(m,n) &= - m I_{AA}(m-1,n) - n I_{AA}(m, n-1)
\label{ell2 m even n odd-0}\\
\nn
&\phantom{=} - m J_{AA}(m-1,n)+ n J_{AA}(m,n-1), 
\end{align}    

\end{Prop}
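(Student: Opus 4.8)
The plan is to substitute $a=m+\eps_a$ and $c=n+\eps_c$ into the integral defining $L_2(m,n)$ in \eqref{L2-def} and to track the order in $(\eps_a,\eps_c)$ of every factor as the two parameters tend to zero. By the expansion \eqref{lambda-expansion}, the product $\lambda(a)\lambda(c)$ carries a double pole $\sim 1/(\eps_a\eps_c)$ with leading coefficient $(-1)^{m+n}(2\pi)^{m+n-2}/[(m-1)!\,(n-1)!]$, so a finite limit can survive only if the remaining factors together contribute exactly one power of $\eps_a$ and one of $\eps_c$. First I would record the parity-dependent leading orders of the trigonometric factors: $\cos(\tfrac{\pi a}{2})$ is $O(1)$ for $m$ even and $O(\eps_a)$ for $m$ odd, while $\sin(\tfrac{\pi c}{2})$ is $O(\eps_c)$ for $n$ even and $O(1)$ for $n$ odd.

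Next I would expand the integrand using $\barzeta(k+\eps,q)=-\tfrac1k[B_k(q)+\eps A_k(q)]+o(\eps)$ together with the reflection rule \eqref{Bernoulli-reflection}. The key structural point is that in the bracket $\barzeta(a,q)-\barzeta(a,1-q)$ the leading ($\eps_a^0$) contribution equals $-\tfrac2m B_m(q)$ when $m$ is odd but cancels identically when $m$ is even, in which case the bracket begins at order $\eps_a$ with coefficient $-\tfrac1m[A_m(q)-A_m(1-q)]$. A second mechanism then enters: under $q\mapsto 1-q$ one has $K(1-q)=K(q)$, $\cot\pi(1-q)=-\cot\pi q$ and $B_k(1-q)=(-1)^kB_k(q)$, so several of the ``naive'' leading integrals --- namely $\int_0^1 B_m B_n\cot\pi q\,dq$ when $m,n$ are both odd, and $\int_0^1 [A_m(q)-A_m(1-q)]B_n(q)\cot\pi q\,dq$ whenever $n$ is odd --- are antisymmetric and hence vanish, pushing the surviving contribution to the next order in $\eps$. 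Running this bookkeeping through the four parity classes shows that the finite limit is always a constant times $\int_0^1 P(q)\cot\pi q\,dq$, where $P$ is a product of two of the functions $A_k,B_k$: an $A_mB_n$-type product in cases (a) and (b), a $B_mB_n$-type product in case (c), and an $A_mA_n$-type product (containing a reflected factor $A_m(1-q)$) in case (d). These are precisely the pairings that, after reduction, yield $I_{AB}$, $I_{BB}$ and the pair $I_{AA},J_{AA}$.

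The last step converts each $\cot\pi q$ integral into the $K$-integrals of \eqref{basic-1}. Writing $\cot\pi q=\tfrac1\pi K'(q)$ and integrating by parts, I would verify that the boundary terms vanish: the product $P$ is zero at $q=0,1$, either because the antisymmetric combination $A_m(q)-A_m(1-q)$ vanishes there (using $A_k(0)=A_k(1)$) or because an odd-index Bernoulli polynomial $B_m$ vanishes at the endpoints for odd $m\ge3$, and in either case $P(q)K(q)\to0$ since $K$ has only a logarithmic singularity. Differentiating $P$ with $B_k'(q)=kB_{k-1}(q)$ and $A_k'(q)=\tfrac{k}{k-1}B_{k-1}(q)+kA_{k-1}(q)$ produces a spurious $B\cdot B$ term from each differentiated $A$-factor, but that term is antisymmetric under $q\mapsto1-q$ in exactly the cases at hand and therefore drops out, which is why no $I_{BB}$ contaminates the $I_{AB}$-formulas of (a),(b). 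A concluding symmetrization $q\mapsto1-q$, using $K(1-q)=K(q)$, folds the reflected arguments onto standard ones, supplies the factor of $2$, and selects the specific integrals of the four cases in the statement. Assembling the leading $\lambda$-coefficients with the trigonometric constants and the $\eps$-powers yields the prefactor $p_{m+n}(2\pi)^{m+n-1}/(m!\,n!)$ after checking the identity $(-1)^{m+n}p_mp_n=p_{m+n}$ in each parity class.

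I expect the main obstacle to be the asymptotic bookkeeping itself: determining, in each of the four parity classes, exactly which order in $(\eps_a,\eps_c)$ survives the double pole, and justifying the two symmetry-driven cancellations --- the vanishing of the naive leading integrals and of the $B_{k-1}$ cross-terms after integration by parts --- which are what force the answer into the single clean family appropriate to each parity. Managing the iterated limit $\lim_{a\to m}\lim_{c\to n}$ without generating a spurious $\eps_a/\eps_c$ ratio is the delicate point, and it works precisely because those cancellations hold identically in $q$ before the limiting constants are inserted.
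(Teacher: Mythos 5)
Your proposal reproduces the paper's own proof essentially step for step: the same expansion data (the pole of $\lambda(k+\varepsilon)$, the parity-dependent orders of $\cos(\tfrac{\pi a}{2})$ and $\sin(\tfrac{\pi c}{2})$, and $\bar\zeta(k+\varepsilon,q)=-\tfrac1k[B_k(q)+\varepsilon A_k(q)]+o(\varepsilon)$), the same two symmetry mechanisms (the identical cancellation of $B_m(q)-B_m(1-q)$ for $m$ even, and the vanishing by antisymmetry about $q=1/2$ of the naive leading integrals when $n$ is odd), and the same final conversion to the $K$-integrals via $\pi\cot\pi q=K'(q)$, the recurrences for $B_k$ and $A_k$, vanishing boundary terms, and folding by $q\mapsto 1-q$. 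One caveat on the bookkeeping you defer to the end: the blanket identity $(-1)^{m+n}p_mp_n=p_{m+n}$ is false when $m,n$ are both odd (there $p_mp_n=-p_{m+n}$, the discrepancy being absorbed by the extra minus sign in $\cos\bigl(\tfrac{\pi}{2}(m+\varepsilon)\bigr)=-p_m\tfrac{\pi}{2}\varepsilon+o(\varepsilon)$), so the per-parity-class sign check you promise is genuinely necessary rather than a formality.
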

\begin{proof}
We make use of the expansion
\ba
\label{cos-expansion}
\cos \left( {\frac{\pi }{2}(k + \varepsilon )} \right)& = &
\begin{cases}
p_{k} +o(\varepsilon),\quad k \text{  even} \\
-p_{k} \displaystyle{\frac{\pi}{2}}\varepsilon+o(\varepsilon),\quad k \text{  odd}.
\end{cases}
\ea

In the case $m$ even, the singularity of 
$\lambda(m+\varepsilon_a)$ as $\varepsilon_a\to 0$ is balanced by
\begin{equation*}
\bar \zeta \left( {m + \varepsilon _a ,q} \right) - \bar \zeta \left( {m + \varepsilon _a ,1 - q} \right) =  - \frac{1}{m}\left[ {A_m (q) - A_m (1 - q)} \right]\varepsilon _a  + O\left( {\varepsilon _a }^2 \right).
\end{equation*}
For $m$ odd, the combination 
$\lambda(m+\varepsilon_a)\cos\left((m+\varepsilon_a)\pi/2\right)$
is regular as $\varepsilon_a\to 0$.\\
The singularity at $n+ \varepsilon_{c}$ 
is treated similarly: the term
$\lambda(n+\varepsilon_c)\sin\left((n+\varepsilon_c)\pi/2\right)$
is regular for $n$ even; for $n$ odd, the singularity of
$\lambda(n+\varepsilon_c)$ requires the vanishing of the
integral in (\ref{L2-def}). This is a consequence of the symmetry of the 
function
$\left[\barzeta(a,q)-\barzeta(a,1-q)\right]\cot\pi q$, about $q  = 1/2$. \\

Introduce the notation
\begin{equation}
\alpha_{m,n} := p_{m+n} \frac{(2 \pi)^{m+n-2}}{m! \, n!},
\end{equation}
\noindent
where $p_{m+n}$ is defined in (\ref{4-parity}). Then

\begin{align*}
\intertext{(a) for $m$ and $n$ even:}
L_2 (m,n) &=  - \pi  \alpha_{m,n} 
\int_0^1 {\left[ {A_m (q) - A_m (1 - q)} \right]} B_n (q)\cot \pi q\,dq,
\\
\intertext{(b) for $m$ even and $n$ odd:}
L_2 (m,n) &=  2  \alpha_{m,n}
\int_0^1 {\left[ {A_m (q) - A_m (1 - q)} \right]} A_n (q)\cot \pi q\,dq,
\\
\intertext{(c) for $m$ odd and $n$ even:}
L_2 (m,n) &=  - \pi ^2  \alpha_{m,n}
\int_0^1 {B_m (q)} B_n (q)\cot \pi q\,dq,
\\
\intertext{(d) for $m$ and $n$ odd:}
L_2 (m,n) &=  - 2\pi \alpha_{m,n} 
\int_0^1 {B_m (q)} A_n (q)\cot \pi q\,dq.
\end{align*}
The final expression in Proposition  \ref{Lem:L2} is obtained by 
writing the integrals above in terms of the basic integrals defined in 
(\ref{basic-1}). This 
can be achieved by using
\[
\pi\cot\pi q = \frac{d}{dq}\log\sin\pi q = K'(q),
\]
integrating by parts and employing the recurrence relations
\begin{align}
\frac{d}{dq}B_k(q) &= k B_{k-1}(q),
\label{B derivative}\\
\frac{d}{dq}A_k(q) &= k A_{k-1}(q)+\frac{k}{k-1}B_{k-1}(q),\quad k\ge2.
\label{A derivative}
\end{align}
See \cite{espmoll2} for a derivation of \eqref{A derivative}.

\medskip\noindent
The kernel $K(q)$ has only a logarithmic singularity at $q=0$,
so it is  possible to separate the integrals involving the difference
${A_k (q) - A_k (1 - q)}$. The
functions $A_k(q)$ are well behaved in $[0,1]$. Only
$A_1(q)=\log\left[\Gamma(q)/\sqrt{2\pi}\right]$ has a singularity at $q=0$,
and this is only logarithmic, and therefore integrable in the cases of 
interest here.
\end{proof}

\begin{example}
\label{T202-a}
The Tornheim sum $T(2,0,2)$ is the simplest example of even weight. Its 
exact value is
\begin{equation}
T(2,0,2) = \frac{3}{4} \zeta(4) = \frac{\pi^{4}}{120},
\end{equation}
\noindent
see (\ref{euler2-0}). Theorem
\ref{Thm:T(m,0,n)} states that $T(2,0,2) = L_{1}(2,2) + L_{2}(2,2)$, 
with
\begin{equation}
L_{1}(2,2) = \frac{\pi^{4}}{45},
\end{equation}
\noindent
and 
\begin{equation}
L_{2}(2,2) = 4 \pi^{3} \left( I_{AB}(1,2) + I_{AB}(2,1) \right).
\end{equation}
\noindent
Therefore
\begin{equation}
T(2,0,2) = \frac{\pi^{4}}{45} + 
4 \pi^{2} \int_{0}^{1} A_{1}(q) B_{2}(q) K(q) \, dq + 
4 \pi^{2} \int_{0}^{1} A_{2}(q) B_{1}(q) K(q) \, dq,
\end{equation}
\end{example}
\noindent
and we have the evaluation
\begin{equation}
\int_{0}^{1} \left[A_{1}(q) B_{2}(q) + A_{2}(q) B_{1}(q)\right]
\log\sin\pi q \, dq
=\frac{\pi^2}{288}.
\end{equation}

\begin{example}
\label{T206-a}
Theorem \ref{Thm:T(m,0,n)} gives 
\begin{equation}
T(2,0,6) = \frac{\pi^{8}}{8100} + \frac{8 \pi^{7}}{45} I_{AB}(1,6) + 
\frac{8 \pi^{7}}{15} I_{AB}(2,5).
\end{equation}
\end{example}

\medskip

\begin{Thm}
Every Tornheim sum can be expressed as a finite sum of 
integrals of the form $I_{AB}$. 
\end{Thm}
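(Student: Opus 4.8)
The plan is to reduce the general statement to the diagonal sums $T(m,0,n)$ already analyzed, and then to collapse the four families of basic integrals \eqref{basic} down to the single family $I_{AB}$. First I would invoke Proposition \ref{prop-hu1}: by the Huard relation, every $T(m,k,n)\in\mathcal{Z}_N$ is an $\mathbb{N}$-linear combination of the sums $T(i,0,N-i)$. Since a finite linear combination of finite sums of $I_{AB}$-integrals is again a finite sum of $I_{AB}$-integrals, it suffices to treat the sums $T(m,0,n)$. The boundary case $m=1$ (the sum $T(1,0,N-1)$) is given by Tornheim's formula \eqref{tor-new3} purely in terms of zeta values, so I would set it aside and assume $m,n\ge 2$, where Theorem \ref{Thm:T(m,0,n)} applies. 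That theorem writes $T(m,0,n)$ as an explicit zeta-value part plus a multiple of $\ell_2(m,n)$. When the weight $N=m+n$ is \emph{even} (cases \eqref{ell2 m even n even} and \eqref{ell2 m odd n odd}), $\ell_2(m,n)$ is \emph{already} a combination of $I_{AB}$-integrals, so the claim holds at once. The entire content of the theorem is therefore the \emph{odd}-weight case, where $\ell_2(m,n)$ is expressed through $I_{BB}$ (when $m$ is odd, $n$ even) or through $I_{AA}$ and $J_{AA}$ (when $m$ is even, $n$ odd).

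The engine for collapsing these is the pair of derivative identities \eqref{B derivative}--\eqref{A derivative} together with $\pi\cot\pi q=K'(q)$. Solving \eqref{A derivative} for the Bernoulli polynomial gives $B_l=\tfrac{l}{l+1}A_{l+1}'-l A_l$ and, equivalently, $A_l=\tfrac{1}{l+1}A_{l+1}'-\tfrac1l B_l$. I would use the first identity to trade one Bernoulli factor inside $I_{BB}$ for $A$-functions, and the second to trade one $A$-factor inside $I_{AA}$ (and $J_{AA}$) for a Bernoulli; in each case the non-derivative piece is immediately of type $I_{AB}$, while the derivative piece is removed by integration by parts. The restrictions $m,n\ge 2$ guarantee, exactly as in Lemma \ref{recursion1}, that the boundary terms reduce to values of $A_k,B_k$ at $0$ and $1$, i.e. to zeta values. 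For $J_{AA}$, whose integrand carries the reflected argument $A_l(1-q)$, I would first bring it to standard form using the reflection formula for the Bernoulli functions $A_k$ from \cite{espmoll2} (the analogue of \eqref{Bernoulli-reflection}), which expresses $A_k(1-q)$ through $A_k(q)$ modulo Clausen-type and polynomial terms.

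The hard part will be that a single integration by parts does not close: differentiating $K$ produces a cotangent integral, and converting it back through $K'=\pi\cot\pi q$ regenerates the \emph{same} integral, so the naive computation collapses to the tautology $0=0$. The real obstacle is therefore to arrange the integration by parts so that the recursion is \emph{solvable} rather than circular. I would handle this by retaining the index-shifted integrals that appear at each step (for instance $I_{AA}(k,l)$ paired with $I_{AA}(l+1,k-1)$) and showing that the resulting finite linear system, among the fixed-weight integrals of a given family, is non-degenerate; its solution then expresses each $I_{BB}$, $I_{AA}$, $J_{AA}$ through $I_{AB}$-integrals and elementary boundary (zeta) terms. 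As an independent check on the odd-weight case, Proposition \ref{prop-hu2} evaluates those $T(m,0,n)$ as integer combinations of $\zeta(2j)\zeta(N-2j)$, pinning down the zeta-value output the reduction must produce. The only analytic delicacy is the logarithmic endpoint singularity of $A_1(q)=\log[\Gamma(q)/\sqrt{2\pi}]$ at $q=0$, which is integrable against $K$ and hence harmless.
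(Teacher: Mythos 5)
Your first paragraph is exactly the right skeleton, and it contains the genuinely important observation: Huard's relation \eqref{huard-1} preserves the weight $N$, so an even-weight sum $T(m,k,n)$ reduces to diagonal sums $T(i,0,N-i)$ in which $i$ and $N-i$ have the \emph{same} parity, whence Theorem \ref{Thm:T(m,0,n)} produces only $I_{AB}$-integrals, through cases \eqref{ell2 m even n even} and \eqref{ell2 m odd n odd}. The gap is in what you call ``the entire content of the theorem,'' namely the odd-weight case. There you propose to convert $I_{BB}$, $I_{AA}$ and $J_{AA}$ into $I_{AB}$ by integration by parts, you yourself concede that the naive computation is circular (it ``collapses to the tautology $0=0$''), and you defer the resolution to the claim that a certain fixed-weight linear system among these integrals is non-degenerate --- a claim you never establish. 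That is a plan, not a proof. There is a further unaddressed complication: rewriting $A_l(1-q)$ inside $J_{AA}$ via the reflection formula of \cite{espmoll2} introduces Clausen-type terms that are not of the form \eqref{basic} at all, so the system you would need to solve is not even closed within the four families you start from. As it stands, the odd-weight branch of your argument does not go through.

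The fix is already in your hands, mislabeled as an ``independent check.'' For odd weight $N=m+n$, Proposition \ref{prop-hu2} (Huard's formula \eqref{huard-2}, valid for all $m\ge 1$) evaluates $T(m,0,n)$ as an integer linear combination of products $\zeta(2j)\zeta(N-2j)$: no integrals occur at all, so the odd-weight case contributes the \emph{empty} sum of $I_{AB}$-integrals and requires no reduction of $I_{BB}$, $I_{AA}$, $J_{AA}$ whatsoever. With that observation your proof closes immediately: apply Huard's reduction \eqref{huard-1}; for odd weight invoke Proposition \ref{prop-hu2}; for even weight invoke cases (a)--(b) of Theorem \ref{Thm:T(m,0,n)} for the terms with $m,n\ge 2$, together with the explicit zeta-value evaluations \eqref{torn-two-0} and \eqref{tor-new3} of the boundary sums $T(0,0,N)$ and $T(1,0,N-1)$. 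This is precisely the argument the paper intends (stated there without proof), and it makes your second and third paragraphs unnecessary: the difficulty you located in the odd-weight case is not where the content of the theorem lies --- that case is the trivial one.
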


\bigskip

\section{A representation for $I_{AB}(m,n)$ in terms of Clausen functions}
\label{S:N even - Clausen}

In this section we consider the integral 
\begin{equation}
I_{AB}(m,n)  =   \frac{1}{\pi}\int_{0}^{1} A_{m}(q) B_{n}(q) K(q) \, dq,
\label{IAB} 
\end{equation}
\noindent
and express it in terms of the family introduced in \eqref{def-X},
\begin{equation}
X_{k,l} := (-1)^{\lfloor{l/2 \rfloor}} 
\frac{l!}{(2\pi)^l}\ione  \log \Gamma(q) B_{k}(q) 
\Cl_{l+1} (2 \pi q)  \, dq. 
\label{def-X1-a}
\end{equation}
Here $\Cl_l(x)$ are the Clausen functions defined in
\eqref{def-clausen-even} and \eqref{def-clausen-odd}.

\medskip

The reduction of the integrals $I_{AB}$ employs 
a new family of special functions, $K_{n}(q)$, 
$n \in \mathbb{N}_{0}$, defined as the iterated primitives of the kernel 
\begin{equation}
K_{0}(q) := - K(q) = - \log \sin \pi q,
\end{equation}
\noindent
that is,
\begin{equation}
K_n(q)  :=  n \int_0^q K_{n-1}(q')dq',\quad n \geq 1. \label{Kn-def}
\end{equation}
\noindent
The functions $K_{n}(q)$ are 
normalized by the condition $K_n(0)=0$ for $n\ge 1$.

\begin{Note}
The plan is to use $(n+1)K_{n}(q) = K_{n+1}'(q)$ and integrate by parts
to transform the expression for $I_{AB}(m,n)$ into a finite sum of integrals
in which the only type-A function that appears is 
$A_{1}(q) = \log \Gamma(q) - \log \sqrt{2 \pi}$. \\

The Fourier expansion of $K_{0}(q)$ is
\begin{equation}
K_{0}(q) = \log 2 + \frac{1}{2} 
\Cl_{1} (2 \pi q),
\end{equation}
\noindent
where $\Cl_{1}$ is a Clausen function. 
We first show that $K_n(q)$ is the sum 
of a polynomial in $q$ of degree $n$ and a multiple of the Clausen
function $\Cl_{n+1}(2\pi q)$ (see \eqref{Kn-Fourier}).
We then show that the polynomial part can be integrated out explicitly in
terms of zeta values, leading finally to the analytic expression for the
Tornheim sums $T(m,0,n)$ of even weight $N = m+n$ in terms of the integrals
$X_{kl}$, given in Theorem \ref{really-final}.
\end{Note}

\medskip

We first recall some basic properties of the functions $A_m(q)$, introduced
in \cite{espmoll2}. The function 
\begin{equation}
A_{m}(q) := m \frac{\partial }{\partial z} \zeta(z,q) \Big{|}_{z=1-m}
\end{equation}
\noindent
satisfies, for $m\in\allN - \{1\}$, the recurrence
\begin{equation}
\frac{d}{dq} A_{m}(q) = mA_{m-1}(q) + \frac{m}{m-1} B_{m-1}(q),
\label{A-der}
\end{equation}
\noindent
with initial condition
\begin{equation}
A_{1}(q) = \log \Gamma(q) - \log \sqrt{2 \pi}. 
\label{A1-def}
\end{equation}
\noindent
The boundary values are given by 
\begin{equation}
A_{m}(0) = A_{m}(1) = m \zeta'(1-m), \text{ for } m \geq 2,
\label{boundary-A}
\end{equation}
\noindent
and 
\begin{equation}
A_{1}(0^{+}) = - \infty, \qquad A_{1}(1) = - \log \sqrt{2\pi}.
\label{boundary-A1}
\end{equation}
The recurrence (\ref{A-der}) is similar to the relation
\begin{equation}
\frac{d}{dq}B_m(q) = m B_{m-1}(q),
\label{B-der}
\end{equation}
\noindent
satisfied by the Bernoulli polynomials.

\begin{Thm}
The integral $I_{AB}(m,n)$ can be expressed in terms of 
the family
\begin{equation}
U_{k,l} := \int_{0}^{1} \log \Gamma(q) B_{k}(q) K_{l}(q) \, dq,
\label{U-def}
\end{equation}
with $k+l = m+n-1$.
\end{Thm}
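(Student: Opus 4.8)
The plan is to eliminate the high-index Bernoulli function $A_m$ from \eqref{IAB} by repeated integration by parts, at each step transferring one derivative onto the iterated primitives $K_l$ of the kernel. Writing $K(q)=-K_0(q)$ and using $K_l(q)=\tfrac{1}{l+1}K_{l+1}'(q)$, which follows from \eqref{Kn-def}, I would reduce everything to a three-index family of integrals
\[
V(j,i,l):=\int_0^1 A_j(q)\,B_i(q)\,K_l(q)\,dq,
\]
so that $\pi\,I_{AB}(m,n)=-V(m,n,0)$. The crucial bookkeeping device is the \emph{weight} $w:=j+i+l$: a single integration by parts preserves $w$, so every integral produced from $V(m,n,0)$ satisfies $j+i+l=m+n$. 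Since the target integrals \eqref{U-def} carry the factor $A_1=\log\Gamma-\log\sqrt{2\pi}$ of weight $1$, this invariant immediately forces the index relation $k+l=m+n-1$ claimed in the theorem.

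For the inductive step I would integrate $V(j,i,l)$, with $j\ge 2$, by parts against $K_{l+1}'$,
\[
V(j,i,l)=\frac{1}{l+1}\Big[A_j B_i K_{l+1}\Big]_0^1-\frac{1}{l+1}\int_0^1 \frac{d}{dq}\!\left(A_j B_i\right)K_{l+1}\,dq,
\]
and then expand the derivative using the recurrences \eqref{A-der} and \eqref{B-der}, namely $A_j'=jA_{j-1}+\tfrac{j}{j-1}B_{j-1}$ and $B_i'=iB_{i-1}$. This splits the remaining integral into three pieces: a term $V(j-1,i,l+1)$ in which the $A$-index has dropped, a term $V(j,i-1,l+1)$ in which the Bernoulli degree has dropped, and a \emph{pure} integral $\int_0^1 B_{j-1}B_i K_{l+1}\,dq$ containing no $A$-function. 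Iterating, the second move lowers $i$ until $B_0=1$, and thereafter the first move lowers $j$; the recursion terminates after finitely many steps when every surviving $A$-integral has $j=1$. At that point $A_1=\log\Gamma-\log\sqrt{2\pi}$, so each such integral equals $U_{i,l}-\log\sqrt{2\pi}\int_0^1 B_i K_l\,dq$, exhibiting the $U_{k,l}$ with $k+l=m+n-1$.

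It remains to dispose of the boundary terms and the pure integrals that have accumulated. At $q=0$ the boundary contribution vanishes because $K_{l+1}(0)=0$ by normalization, and this zero overcomes even the logarithmic blow-up of $A_1$ recorded in \eqref{boundary-A1}; at $q=1$ one uses the finite values $A_j(1)=j\zeta'(1-j)$ from \eqref{boundary-A} together with the fact that $K_{l+1}(1)$ is a constant (its Clausen part $\Cl_{l+2}(2\pi q)$ evaluating at $q=1$ to $0$ or a Riemann zeta value), so the boundary terms are explicit constants. The leftover pure integrals $\int_0^1 B_a B_b K_l\,dq$ and $\int_0^1 B_i K_l\,dq$ likewise reduce to zeta values, since $K_l$ is a polynomial plus a multiple of $\Cl_{l+1}(2\pi q)$ and the Fourier expansion of a Bernoulli polynomial integrates against a Clausen function to a zeta value. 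Thus $I_{AB}(m,n)$ is a finite linear combination of the $U_{k,l}$ with $k+l=m+n-1$, up to an explicit additive constant built from zeta values.

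The main obstacle I anticipate is organizational rather than analytic: controlling the branching recursion so that the coefficients of the $U_{k,l}$ are assembled correctly and the many pure-integral and boundary contributions are collected into a single explicit constant. The one genuinely delicate point is the interaction at $q=0$ between the singularity $A_1(0^{+})=-\infty$ and the vanishing of $K_{l+1}$, which must be checked to guarantee that every integration by parts is legitimate and that no spurious boundary contribution is lost.
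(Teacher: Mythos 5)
Your proposal is correct and follows essentially the same route as the paper: the same integration-by-parts recursion against $K_{l+1}'=(l+1)K_l$ combined with the recurrences for $A_j$ and $B_i$, the same index-sum invariant forcing $k+l=m+n-1$, the same termination strategy (lower the Bernoulli index to $B_0$, then drop the $A$-index until $A_1=\log\Gamma-\log\sqrt{2\pi}$ appears), and the same disposal of boundary terms via $K_{l+1}(0)=0$ and the finite values $A_j(1)$, $K_{l+1}(1)$. The only cosmetic difference is that you evaluate the leftover pure Bernoulli integrals through the Fourier--Clausen expansion of $K_l$, whereas the paper reduces them via the product formula for Bernoulli polynomials to the integrals $W_{r,0}=\int_0^1 B_r(q)K_0(q)\,dq$, which are known in closed form.
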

\begin{proof}
The proof is by induction on $m$. We show first 
that any integral of the form
\begin{equation}
\int_{0}^{1} A_{m}(q) B_{n}(q) K_{p}(q) \, dq
\notag
\end{equation}
can be expressed in terms of 
\begin{equation}
U^{*}_{k,l} := \int_{0}^{1} A_{1}(q) B_{k}(q) K_{l}(q) \, dq,
\end{equation}
with $k+l+1=m+n+p$,
and then transform into $U_{k,l}$ by using $A_{1}(q) = \log \Gamma(q) 
- \log \sqrt{2 \pi}$. \\

Start with
\begin{equation}
\notag
(p+1)\int_{0}^{1} A_{m}(q) B_{n}(q) K_{p}(q) \, dq =
\int_{0}^{1} A_{m}(q) B_{n}(q) \frac{d}{dq} K_{p+1}(q) \, dq. 
\end{equation}
\noindent
Integrate by parts and use $K_{p+1}(0)=0$, (\ref{A-der}), \eqref{boundary-A}
and (\ref{B-der})  to derive
\begin{equation}\label{step-1}
\begin{split}
(p+1)\int_{0}^{1} A_{m}(q) B_{n}(q) K_{p}(q) \, dq &=
A_{m}(1)B_{n}(1)K_{p+1}(1) \\
&\quad - m \int_{0}^{1} A_{m-1}(q) B_{n}(q) K_{p+1}(q) \, dq   \\
&\quad - \frac{m}{m-1} \int_{0}^{1} B_{m-1}(q) B_{n}(q) K_{p+1}(q) \, dq\\
&\quad- n \int_{0}^{1} A_{m}(q) B_{n-1}(q) K_{p+1}(q).
\end{split}
\end{equation}

The term involving the product of two Bernoulli polynomials
can be evaluated in closed form, and only the last term in 
(\ref{step-1}) requires further analysis. Repeating the process to this 
last term produces
\begin{multline*}
(p+1)(p+2) \ione A_{m}(q)B_{n-1}(q) K_{p+1}(q) \, dq = A_{m}(1)B_{n-1}(1)K_{p+2}(1) 
\\
-  m \int_{0}^{1} A_{m-1}(q) B_{n-1}(q) K_{p+2}(q) \, dq 
-  \frac{m}{m-1}  \int_{0}^{1} B_{m-1}(q) B_{n-1}(q) K_{p+2}(q) \, dq \\
-  (n-1)  \int_{0}^{1} A_{m}(q) B_{n-2}(q) K_{p+2}(q) \, dq.  
\end{multline*}

We conclude that each integration by parts produces terms that can be 
evaluated in the stated closed form 
(by the induction hypothesis) and one integral
where the index of the Bernoulli polynomial term is decreased by one. 
Note that the indices $j,k,l$ of the three functions in any term of the 
integrand appearing in this procedure satisfy $j+k+l=m+n+p$.
The case of interest, $I_{AB}(m,n)$, corresponds to $p=0$. \\

Eventually we arrive at 
\begin{equation*}
\begin{split}
(p+n+1) \int_{0}^{1} A_{m}(q)&B_{0}(q) K_{p+n}(q) \, dq = 
A_{m}(1)B_{0}(1) K_{p+n+1}(1) \\
& -\int_{0}^{1} K_{p+n+1}(q) B_{0}(q) \left( m A_{m-1}(q) + \frac{m}{m-1} 
B_{m-1}(q) \right) \, dq  \\
& -\int_{0}^{1} A_{m}(q) \frac{d}{dq}B_{0}(q) K_{p+n+1}(q) \, dq .
\end{split}
\end{equation*}
\noindent
The fact that $B_{0}(q) \equiv 1$, so that the last integral vanishes 
identically, completes the argument. \\

It remains to prove that integrals of the form
\begin{equation}
V_{k,m,n} := \int_{0}^{1} B_{k}(q)B_{m}(q) K_{n}(q) \, dq 
\end{equation}
\noindent
can be evaluated in closed form and to use the relation (\ref{A1-def}) 
to eliminate $A_{1}(q)$ from the formulas. \\

The first step in this part of the proof
is to use the relation

\begin{multline}
B_{n_{1}}(q) B_{n_{2}}(q)  =  
\sum_{k=0}^{k(n_{1},n_{2})} \left[ n_{1} \binom{n_{2}}{2k} + n_{2} 
\binom{n_{1}}{2k} \right] 
\frac{B_{2k}}{n_{1}+n_{2}-2k} B_{n_{1}+n_{2}-2k}(q)   \\
 +  (-1)^{n_{1}+1} \frac{n_{1}! n_{2}!}{(n_{1}+n_{2})!} B_{n_{1}+n_{2} },
\end{multline}

\noindent
with 
\begin{equation}
k(n_{1},n_{2}) = \text{Max} \left\{ \lfloor n_{1}/2 \rfloor \}, 
 \lfloor n_{2}/2 \rfloor \} \right\},
\end{equation}
\noindent
for the product of two Bernoulli polynomials in order to reduce $V_{k,m,n}$
to sums of terms of the form 

\begin{equation}
W_{m,n} := \ione B_{m}(q) K_{n}(q) \, dq.
\label{W-def}
\end{equation}

\noindent
Integration by parts show that (\ref{W-def}) can be written in terms of 
the $W$-integrals with second index $0$. Indeed,
\begin{eqnarray}
W_{m,n} & = & \frac{1}{m+1} \ione K_{n}(q) \frac{d}{dq}B_{m+1}(q) \, dq 
\nonumber \\
 & = &\frac{B_{m+1}(1)K_{n}(1)}{m+1} - 
\frac{1}{m+1} \ione B_{m+1}(q) \frac{d}{dq} K_{n}(q) \nonumber \\
 & = &\frac{B_{m+1}(1)K_{n}(1)}{m+1} - 
\frac{n}{m+1} W_{m+1,n-1}, \nonumber 
\end{eqnarray}
\noindent
and iterating this procedure yields
\begin{equation}
W_{m,n} = \sum_{k=1}^{n} \frac{(-1)^{k+1}}{(m+1)_{k}} B_{m+k}(1)K_{n-k+1}(1)
+ \frac{(-1)^{n}}{(m+1)_{n}} W_{m+n,0},
\end{equation}
\noindent
where $(m)_{k} = m(m+1) \cdots (m+k-1)$ is the Pochhammer symbol.

The closed form of $V_{k,m,n}$ follows from
\begin{equation}
W_{r,0} = \int_{0}^{1} B_{r}(q) K_{0}(q) \, dq =
\begin{cases} 
\log 2  & r = 0, \\
0  & r \mbox{ odd}, \\
(-1)^{r/2} r!  \zeta(r+1)/(2 \pi)^{r}, & r > 0 \mbox{ even},
\end{cases}
\end{equation}
\noindent
given as Example $5.2$ in \cite{espmoll1}.
\end{proof}

\begin{Note}
The boundary terms $K_{n}(1)$ are given in (\ref{boundary-B}).
\end{Note}

Our final step in the reduction of the integrals $I_{AB}$
makes use of a representation of the kernels $K_{n}(q)$ in 
terms of the Clausen functions $\Cl_{n}(x)$, defined in 
(\ref{def-clausen-even}) and (\ref{def-clausen-odd}). 

\begin{Prop}
The kernels $K_n(q)$ are given by
\begin{equation}
K_0(q)= \log{2}+\frac{1}{2}\Cl_{1} (2\pi q),
\label{K0-Clausen}
\end{equation}
\noindent
and, for $n \geq 1$, 
\begin{equation}
K_n(q)=q^n\log{2}+{n!}\sum_{k=1}^{\lfloor n/2 \rfloor}
\frac{(-1)^{k+1}\zeta(2k+1)}{(2\pi)^{2k}(n-2k)!}q^{n-2k}
+p_n\frac{n!}{(2\pi)^n}\Cl_{n + 1} (2\pi q),
\label{Kn-Fourier}
\end{equation}
\noindent
where the coefficient $p_{n} = (-1)^{\lfloor{n/2 \rfloor}}$ is 
the {\em $4$-parity} of $n$ defined in (\ref{4-parity}). 
\end{Prop}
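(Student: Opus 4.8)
The plan is to prove both displayed formulas by induction on $n$, using the defining recurrence $K_{n+1}(q) = (n+1)\int_0^q K_n(q')\,dq'$ from \eqref{Kn-def}. The identity \eqref{K0-Clausen} for $K_0$ is nothing but the classical Fourier expansion of $-\log\sin\pi q$ and serves as the base case; integrating it once produces $K_1$ and anchors the inductive formula \eqref{Kn-Fourier}. Assuming \eqref{Kn-Fourier} holds for a given $n\ge 1$, I would compute $K_{n+1}$ by integrating the three pieces of $K_n$ term by term and multiplying by $n+1$.

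For the integration of the Clausen term I would use the elementary antiderivative rules that follow at once from \eqref{def-clausen-even} and \eqref{def-clausen-odd}, namely $\frac{d}{dx}\Cl_{2j}(x)=\Cl_{2j-1}(x)$ and $\frac{d}{dx}\Cl_{2j+1}(x)=-\Cl_{2j}(x)$. These give
\[
\int_0^q \Cl_{n+1}(2\pi q')\,dq' = \pm\frac{1}{2\pi}\,\Cl_{n+2}(2\pi q) + \frac{1}{2\pi}\,\Cl_{n+2}(0),
\]
where the boundary value $\Cl_{n+2}(0)$ equals $\zeta(n+2)$ when $n+2$ is odd and vanishes when $n+2$ is even. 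Thus integrating the top (Clausen) term of $K_n$ raises the index to $\Cl_{n+2}$, exactly as \eqref{Kn-Fourier} demands, and---only when $n$ is odd---creates in addition a constant proportional to $\zeta(n+2)$.

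The monomial $q^n\log 2$ integrates to $q^{n+1}\log 2$ after the factor $n+1$, and the finite polynomial sum integrates to a sum of the same shape: writing $(n+1)\,n! = (n+1)!$ and using $\frac{1}{(n-2k)!\,(n+1-2k)} = \frac{1}{(n+1-2k)!}$ lines up the factorials and lowers each exponent to $(n+1)-2k$. The matching then splits by the parity of $n$. When $n$ is even one has $\lfloor (n+1)/2\rfloor = \lfloor n/2\rfloor$, the Clausen boundary term vanishes, and it only remains to check $p_{n+1}=p_n$. When $n$ is odd, $\lfloor (n+1)/2\rfloor = \lfloor n/2\rfloor + 1$, so the polynomial sum for $K_{n+1}$ carries one extra top term at $k=(n+1)/2$; this is supplied precisely by the constant proportional to $\zeta(n+2)$ coming from the Clausen integration, while the sign $-1$ produced by integrating the even-index $\Cl_{n+1}$ is absorbed into the relation $p_{n+1}=-p_n$.

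The one delicate point---and the place I expect to spend the most care---is the sign and constant bookkeeping in the odd case: one must verify both that the $4$-parities satisfy $p_{n+1}=p_n$ for $n$ even and $p_{n+1}=-p_n$ for $n$ odd, and that the sign $(-1)^{(n+3)/2}$ attached to the freshly created $\zeta(n+2)$ term agrees with $p_n$. Each of these reduces to the elementary facts that $\lfloor(n+1)/2\rfloor-\lfloor n/2\rfloor\in\{0,1\}$ and that shifting a floor-exponent by an integer changes $(-1)^{\lfloor\cdot\rfloor}$ in a controlled way, but together they are exactly what makes the two consecutive, superficially different, instances of \eqref{Kn-Fourier} consistent and complete the induction.
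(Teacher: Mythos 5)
Your overall strategy---induction on $n$ via the recurrence \eqref{Kn-def}, integrating the polynomial and Clausen pieces separately---is in substance the same as the paper's proof, and your parity and $\zeta$-constant bookkeeping for the inductive step is correct (the paper leaves precisely that part implicit, saying only that for $n\ge 2$ one integrates term by term). The genuine gap is at the very first step, $K_0\to K_1$, which is exactly the point to which the paper devotes essentially all of its proof. Your justification---that the rules $\frac{d}{dx}\Cl_{2j}(x)=\Cl_{2j-1}(x)$ and $\frac{d}{dx}\Cl_{2j+1}(x)=-\Cl_{2j}(x)$ ``follow at once'' from the series definitions---is valid only when the differentiated series converges uniformly, i.e.\ for indices at least $2$. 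For the one rule you need at the base step, $\frac{d}{dx}\Cl_2=\Cl_1$, the series $\sum_k \cos(kx)/k$ is \emph{not} uniformly convergent on $[0,2\pi]$; its sum $\Cl_1(2\pi q)=-\log(2\sin\pi q)$ is even unbounded, with logarithmic singularities at $q=0,1$. Consequently neither the term-by-term differentiation of $\Cl_2$ nor the interchange
\begin{equation*}
\int_0^q \Cl_1(2\pi t)\,dt \;=\; \sum_{k\ge1}\frac1k\int_0^q\cos(2\pi kt)\,dt
\end{equation*}
is automatic; this interchange is precisely what the paper declines to do (it states that one cannot just integrate \eqref{K0-Fourier} term by term, for lack of uniform convergence) and instead circumvents by exhibiting an explicit antiderivative of $K_0$ involving $\operatorname{Li}_2(e^{2\pi iq})$ and separating real and imaginary parts. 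Your route can be repaired: Dirichlet's test gives uniform convergence of $\sum_k\cos(2\pi kt)/k$ on $[\delta,1-\delta]$, hence $\frac{d}{dq}\Cl_2(2\pi q)=2\pi\,\Cl_1(2\pi q)$ on $(0,1)$; applying the fundamental theorem of calculus on $[\delta,q]$ and letting $\delta\to0^+$ (using continuity of $\Cl_2$ at $0$ and integrability of the logarithmic singularity) yields the displayed identity. But some such argument must be supplied---it is the only analytic difficulty in the proposition, and as written you have assumed it away.

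There is a second, very concrete problem: your base step does not anchor the induction as stated. Integrating \eqref{K0-Clausen} literally gives $K_1(q)=q\log 2+\frac{1}{4\pi}\Cl_2(2\pi q)$, whereas \eqref{Kn-Fourier} at $n=1$ asserts $K_1(q)=q\log 2+\frac{1}{2\pi}\Cl_2(2\pi q)$, a mismatch by a factor of $2$. The culprit is the constant in \eqref{K0-Clausen}: the Fourier expansion of $K_0(q)=-\log\sin\pi q$ is $\log 2+\Cl_1(2\pi q)$, \emph{without} the factor $\tfrac12$ (this slip occurs in the paper as well; the paper's own dilogarithm computation, which yields $K_1(q)=q\log 2+\frac{1}{2\pi}\sum_k\sin(2\pi kq)/k^2$, is consistent only with coefficient $1$). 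Had you actually carried out the integration you describe rather than asserting that it ``anchors the inductive formula,'' this inconsistency would have surfaced immediately; a complete write-up should correct the constant in \eqref{K0-Clausen} and then run your induction, with the analytic repair above, from the corrected base case.
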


\begin{proof}
The Fourier series expansion of the 
function $K_0(q) = -\log\sin(\pi q)$ 
\begin{equation}
K_0(q) = \log{2}+\frac{1}{2}\sum_{k=1}^{\infty}\frac{\cos(2\pi k q)}{k}.
\label{K0-Fourier}
\end{equation}
\noindent
is standard. To
compute the expansion for $K_{1}(q)$, we cannot just integrate term by 
term the series (\ref{K0-Fourier}), since it is not 
uniformly convergent for $q \in 
[0,1]$. To bypass this problem, observe that the indefinite integral 
of $K_{0}(q)$ can be written as 
\begin{equation}
\int {K_0 (q)dq = q\log \left( {1 - e^{2\pi qi} } \right)}  
- q\log \sin \left( {\pi q} \right) - \frac{{\pi i}}
{2}q^2  - \frac{i}{{2\pi }}\operatorname{Li} _2 \left( {e^{2\pi qi} } \right),
\end{equation}
\noindent
where $\operatorname{Li} _n(z)$ is the polylogarithmic function defined
for $z\in\allC, |z|\le 1$ by
\begin{equation}
\operatorname{Li}_n(z) := \sum_{k=1}^{\infty}\frac{z^k}{k^n},\quad 
n\ge 2.
\end{equation}
Separating the real and imaginary parts we obtain, for $0\le q \le 1$,
\begin{multline}
\int K_0 (q)dq = 
q\log{2}+\frac{1}{2\pi}\sum_{k=1}^{\infty}\frac{\sin (2\pi k q)}{k^{2}}\cr
+i\left[\frac{\pi}{2}q(1-q)+
\frac{1}{2\pi}\sum_{k=1}^{\infty}\frac{\cos(2\pi k q)}{k^2}\right],
\end{multline}
and the proposition is proved for $n=1$ since the term in 
brackets is a $q$-independent constant. This can be seen directly from 
Hurwitz's Fourier series representation of the Bernoulli polynomials
\begin{equation}
B_{n}(q) = - \frac{n!}{(2 \pi i)^{n}} \sideset{}{'}\sum_{k= - \infty}^{\infty} 
\frac{e^{2 \pi i k q}}{k^{n}},
\end{equation}
\noindent
where the prime indicates that the term $k = 0$ must be 
excluded in the sum.

The result for $n\ge 2$ can be obtained directly from the expression 
for $K_1(q)$,
\begin{equation}
K_1(q)=q\log{2}+\frac{1}{2\pi}\sum_{k=1}^{\infty}\frac{\sin (2\pi k q)}{k^{2}},
\label{K1-Fourier}
\end{equation}
\noindent
integrating successively term by term, which is now legitimate
since, for $n \geq 2$,  the series defining the Clausen function 
$\Cl_{n}(x)$ is uniformly convergent.
\end{proof}

\begin{Note}
The evaluation of the basic integral $I_{AB}(m,n)$ requires the boundary 
values of $K_n(q)$, $n\in\allN$, at $q=1$. These are given by 
\begin{equation}
K_n(1)=\log{2}+{n!}\sum_{k=1}^{\lfloor (n-1)/2 \rfloor}
\frac{(-1)^{k+1}\zeta(2k+1)}{(2\pi)^{2k}(n-2k)!},\qquad n\in\allN,
\label{boundary-B}
\end{equation}
where we have used the special values 
$\Cl_{2n}(2\pi)=0$ and 
$\Cl_{2n+1}(2\pi)=\zeta(2n+1)$. \\
\end{Note}

\begin{Note}
The method presented in this section reduces the evaluation of all 
{\em even}-weight Tornheim sums to the evaluation of the integrals $U_{m,n}$,
defined in \eqref{U-def}.
The expansion (\ref{Kn-Fourier}) and the formula
\begin{multline}
\ione q^{n} \log \Gamma(q) \, dq = 
\frac{1}{n+1} \sum_{k=1}^{\lfloor{ \tfrac{n+1}{2} \rfloor} } (-1)^{k} 
\binom{n+1}{2k-1} \frac{(2k)!}{k (2 \pi)^{2k}} \left[ \delta \zeta(2k) - 
\zeta'(2k) \right] \\
- \frac{1}{n+1} \sum_{k=1}^{\lfloor{ \tfrac{n}{2} \rfloor}} 
(-1)^{k} \binom{n+1}{2k} \frac{(2k)!}{2(2 \pi)^{2k}} \zeta(2k+1) + 
\frac{\log \sqrt{2 \pi}}{n+1}, 
\end{multline}
\noindent
(with $\delta = 2 \log \sqrt{2 \pi} + \gamma$, where $\gamma$ is Euler 
constant) given as (6.14) in \cite{espmoll1}, reduces the 
evaluation of Tornheim sums
to the evaluation of the integrals $X_{m,n}$, introduced in \eqref{def-X1-a}.
The result described in 
Theorem \ref{really-final} appears from transforming the explicit representation
\eqref{T(m,0,n)-explicit-1} in Theorem \ref{Thm:T(m,0,n)}
by the methods presented in this section. The details are left to the 
reader. 
\end{Note}

\begin{Note}\label{Note-X0n}
The value 
\begin{equation}
X_{0,n} = (-1)^{ \lfloor{ n/2 \rfloor}} \frac{n!}{(2 \pi)^n} 
\ione \log \Gamma(q) \Cl_{n+1}(2 \pi q) \, dq
\end{equation}
\noindent
can be obtained from the integrals 
\begin{align*}
\ione \log \Gamma(q) \sin( 2 \pi k q ) \, dq & =  \frac{A + \log k}{2 \pi k},\cr
\ione \log \Gamma(q) \cos( 2 \pi k q ) \, dq & =  \frac{1}{4 k},
\end{align*}
\noindent
where $A = \log(2 \pi) + \gamma$, with $\gamma$ the Euler constant. This 
appears in \cite{gr} 6.443.1 and 6.443.3. It follows that 
\begin{equation}
\label{X0-value}
X_{0,n} = (-1)^{ \lfloor{ n/2 \rfloor}} \frac{n!}{(2 \pi)^n} 
\times 
\begin{cases}
\frac{1}{2 \pi} \left( A \zeta(n+2) - \zeta'(n+1) \right), \quad 
\text{ for } n \text{ odd}, \\
\frac{1}{4} \zeta(n+2), \quad \text{ for } n \text{ even}. 
\end{cases}
\end{equation}
For our evaluations of even-weight Tornheim sums, we will only need the result for $n$ even. 
\end{Note}

\begin{Example}
\label{T202-b}
This is a continuation of Example \ref{T202-a}.  We express here 
the sum $T(2,0,2)$ first in terms of the $U$-integrals. 
Theorem \ref{Thm:T(m,0,n)}, with $m = n = 2$, gives
\begin{multline*}
T(2,0,2) = \zeta (2)^2  - \frac{1}
{2}\zeta (4) - 4\pi ^2 \left[ \int_0^1 {A_1 (q)B_2 (q)K_0 } (q)dq\right.\\ 
+ \left. \int_0^1 {A_2 (q)B_1 (q)K_0 } (q)dq\right].
\end{multline*}

The second integral is now modified to avoid the presence of the function 
$A_{2}(q)$. Write 
$K_0(q)=(d/dq)K_1(q)$, integrate by parts and use
formulas (\ref{A-der}) and (\ref{B-der}) to  get 
\begin{multline*}
\int_{0}^{1}  A_{2}(q)B_{1}(q)K_{0}(q)dq = A_{2}(1)B_{1}(1)K_{1}(1)
-\int_{0}^{1} A_{2}(q)B_{0}(q)K_{1}(q)dq\\
-2\int_{0}^{1} A_{1}(q)B_{1}(q)K_{1}(q)dq
-2\int_{0}^{1} B_{1}^2(q)K_{1}(q)dq.
\end{multline*}
The new integral involving $A_2(q)B_{0}(q)K_{1}(q)$ can be 
dealt with in a similar manner, to produce
\begin{equation*}
\begin{split}
\int_0^1 A_2 (q)B_1 (q)K_0(q)&\,dq = A_2(1)B_1(1)K_1(1)- \frac{1}{2} A_2(1)B_0(1)K_2(1)\cr
&+ \int_0^1 A_1 (q)B_0(q)K_2(q)\,dq -2\int_0^1 A_1 (q)B_1 (q)K_1(q)\,dq\cr
&+ \int_0^1 B_0(q)B_1 (q)K_2(q)\,dq -2\int_0^1 B_1^2 (q)K_1(q)\,dq.
\end{split}
\end{equation*}
\noindent
From (\ref{boundary-A}) and (\ref{boundary-B}) we see that the boundary
terms calcel out and therefore the Tornheim sum $T(2,0,2)$ is given by
\begin{equation}
T(2,0,2) = \zeta^{2}(2) - \zeta(4)/2
- 4 \pi^{2} \left( U^{*}_{2,0} - 2U^{*}_{1,1} + U^{*}_{0,2} + V_{0,1,2} - 2V_{1,1,1} 
\right). \nonumber
\end{equation}
\noindent
The expression for $V_{k,m,n}$ shows that
$V_{0,1,2} = 2 V_{1,1,1} = \frac{1}{12} \log 2$. Therefore, the sum 
$T(2,0,2)$ is expressed in terms of the $U^{*}$-integrals as
\begin{equation*}
T(2,0,2) = \frac{\pi^{4}}{45}
- 4 \pi^{2} \left( U^{*}_{2,0} - 2U^{*}_{1,1} + U^{*}_{0,2} \right).
\end{equation*}
\noindent
The transformation from $U^{*}_{m,n}$ to $U_{m,n}$ gives
\begin{equation*}
T(2,0,2) = \frac{\pi^{4}}{45}
+\frac{1}{3} \log (2 \pi ) \left(\pi ^2 \log (2)+9 \zeta (3)\right)
- 4 \pi^{2} \left( U_{2,0} - 2U_{1,1} + U_{0,2} \right).
\end{equation*}
The final transformation is to write $U_{m,n}$ in terms 
of $X_{m,n}$ to obtain
\begin{equation}
T(2,0,2) = \frac{\pi^{4}}{45} - Y_{2,2}^{*},
\end{equation}
\noindent
where
\begin{equation}
Y_{2,2}^{*} := 4 \pi^{2} \left( X_{0,2} - 2X_{1,1} + X_{2,0} \right) 
-2 \zeta(3) \log(2 \pi).
\end{equation}
\end{Example}

\begin{Example}
At weight 8, we consider first the value of 
\begin{equation*}
T(2,0,6) = \frac{\pi^{8}}{8100} + \frac{4 \pi^{7}}{45}
\left( 2 I_{AB}(1,6) + 6 I_{AB}(2,5) \right).
\end{equation*}
\\
Expressing the $I_{AB}$ integrals in terms of the $U^{*}$ integrals we find
\begin{multline*}
2 I_{AB}(1,6) + 6 I_{AB}(2,5) = \\
-\frac{2}{\pi}\left( U_{0,6}^* - 6 U_{1,5}^* 
+ 15 U_{2,4}^* - 20 U_{3,3}^* + 15 U_{4,2}^* - 6U_{5,1}^* + U_{6,0}^* \right),
\end{multline*}
and in terms of the $U$ integrals:
\begin{multline*}
2 I_{AB}(1,6) + 6 I_{AB}(2,5) =
\frac{1}{2\pi} \log (2 \pi ) \left(\frac{\log (2)}{21}-\frac{\zeta (3)}{2 \pi ^2}-\frac{15 \zeta (5)}{2
   \pi ^4}+\frac{315 \zeta (7)}{2 \pi ^6}\right)\\
   -\frac{2}{\pi}\left( U_{0,6} - 6 U_{1,5} + 15 U_{2,4} - 20 U_{3,3} + 15 U_{4,2} - 6U_{5,1} + U_{6,0} \right).
\end{multline*}

\noindent
Thus we obtain the representation
\begin{multline}
T(2,0,6) = \frac{\pi^{8}}{8100} + \frac{2 \pi^{6}}{45}
\log (2 \pi ) \left(\frac{\log (2)}{21}-\frac{\zeta (3)}{2 \pi ^2}-\frac{15 \zeta (5)}{2
   \pi ^4}+\frac{315 \zeta (7)}{2 \pi ^6}\right)\\
   - \frac{8 \pi^{6}}{45}\left( U_{0,6} - 6 U_{1,5} + 15 U_{2,4} - 20 U_{3,3} + 15 U_{4,2} - 6U_{5,1} + U_{6,0} \right).
\end{multline}

\noindent
In terms of the $X_{m,n}$ integrals we have
\begin{equation}
T(2,0,6) = \frac{\pi^{8}}{8100} - Y_{6,2}^{*},
\end{equation}
where
\begin{multline}
Y_{6,2}^{*} := \frac{8 \pi^{6}}{45}\left( X_{0,6} - 6 X_{1,5} + 15 X_{2,4} - 
20 X_{3,3} + 15 X_{4,2} - 6X_{5,1} + X_{6,0} \right)\cr
 - 6 \zeta(7) \log(2 \pi).
\end{multline}

\end{Example}

\noindent
\begin{Example}
The other two Tornheim sums of weight $8$ are $T(3,0,5)$ and $T(4,0,4)$. They 
are given by 
\begin{align*}
T(3,0,5) &= -\frac{\pi^{8}}{18900}   + \zeta(3) \zeta(5)
+ \frac{8 \pi^{7}}{45} \left( 5 I_{AB}(4,3) + 3  I_{AB}(5,2) \right)\cr
\intertext{and}
T(4,0,4) &= \frac{\pi^{8}}{14175} + \frac{8 \pi^{7}}{9}
\left( I_{AB}(3,4) + I_{AB}(4,3) \right),
\end{align*}
\noindent
respectively.
\end{Example}
\noindent
Expressing the $I_{AB}$ integrals in terms of the $X$ integrals we find
\begin{align}
T(3,0,5) &= -\frac{\pi^{8}}{18900}  - 6\zeta(3) \zeta(5)
- Y_{3,5}^{*},\\
\intertext{where}
Y_{3,5}^{*} &:= \frac{32 \pi^{6}}{9} 
\left( X_{0,6} - 3 X_{1,5} + 3 X_{2,4} - X_{3,3} \right) 
- 15 \zeta(7) \log(2 \pi),\\
\intertext{and}
T(4,0,4) &= \frac{\pi^{8}}{14175} - 4\zeta(3) \zeta(5)
- Y_{4,4}^{*},\\
\intertext{where}
Y_{4,4}^{*} &:= \frac{8 \pi^{6}}{3} 
\left( X_{0,6} - 4 X_{1,5} + 6 X_{2,4} - 4 X_{3,3} + X_{4,2} \right)
- 20 \zeta(7) \log(2 \pi).
\end{align}

\medskip
\begin{Note}
In the examples given above, the integrals $X_{k,l}$
appear in a symmetric form. This is a general feature, as stated in
Theorem \ref{really-final}. Depending on the parity of the integers $m$
and $n$, the even-weight Tornheim sum $T(m,0,n)$ contains either $Y_{m,n}$
or $Y_{n,m}$, where
\begin{equation*}
Y_{m,n} := \frac{2(2\pi)^{m+n-2}}{m!(n-2)!} \sum_{j=0}^{m} (-1)^{j} \binom{m}{j} X_{j,m+n-2-j}.
\label{def-Y-1}
\end{equation*}
There are linear relations among the integral $Y_{m,n}$ of fixed
weight $N:=m+n$. These come from the linear relations among the Tornheim
sums $T(m,0,n)$ of the same weight, discussed in the next section.
For example, the identity
\begin{equation*}
T(2,0,6) +  T(6,0,2) =  \frac{2}{3} \zeta(8)
\end{equation*}
\noindent
gives
\begin{equation*}
Y_{2,6}+Y_{6,2} = 12 \log (2
   \pi ) \zeta (7) + \frac{7}{3} \zeta (8)  - 6 \zeta (3) \zeta (5),
\end{equation*}
whereas
\begin{equation*}
5T(6,0,2) + 2 T(5,0,3) =  \frac{163}{12} \zeta(8) -8 \zeta(3) \zeta(5) 
\end{equation*}
\noindent
gives
\begin{equation*}
5 Y_{2,6} + 2 Y_{5,3} = 60 \log (2 \pi ) \zeta
   (7)+\frac{29}{6}\zeta(8)- 30\zeta (3) \zeta (5).
\end{equation*}
\end{Note}
A systematic study of the integrals $Y_{m,n}$ will be presented elsewhere. 

\bigskip

\section{A systematic list of examples} \label{S:examples}

Here we present a systematic evaluation of the Tornheim sums 
$T(m,k,n)$ with $m, \, k, \, n \in \mathbb{N} \cup \{0 \}$. The sums are 
organized according to the weight $N = m+k+n$.  The conditions 
$m+n \geq  2, \, k+n \geq 2$ and $N \geq 3$ are imposed for 
convergence. The symmetry  relation 
$T(m,k,n) = T(k,m,n)$ is used to impose $m \geq k$. 

We use the notation $\mathcal{Z}_{N}$ and $\mathcal{Z}_{N}^{0}$
introduced in the introduction.
Huard's result \eqref{huard-1} allows us to evaluate any Tornheim
sum in $\mathcal{Z}_{N}$ in terms of the sums in $\mathcal{Z}_{N}^{0}$.
Before detailing a systematic algorithm to evaluate all the sums in
$\mathcal{Z}_{N}$ for a given weight $N$ and giving specific examples, 
we shall determine how many of the Tornheim sums in $\mathcal{Z}_{N}^{0}$, for
a given {\em even} weight $N$, remain
undetermined after using all the known algebraic identities for these sums.

As we discussed in the introduction, two of the sums in $\mathcal{Z}_{N}^{0}$
have known explicit evaluations:
\begin{equation}
T(0,0,N) = \zeta(N-1)-\zeta(N),\quad N\ge 3,
\label{torn-two-1}
\end{equation}
and
\begin{equation}
T(1,0,N-1) = \frac{1}{2} \left[ (N-1) \zeta(N) - 
\sum_{i=2}^{N-2} \zeta(i) \zeta(N-i) \right], \quad N\ge 3.\label{tor-new3-1}
\end{equation}
\noindent

\noindent
Euler proved that for $m\ge 2, n\ge 2$ the sums $T(m,0,n)$ satisfy the 
symmetrized identity
\begin{equation}
T(m,0,n) + T(n,0,m) = \zeta(m)\zeta(n) - \zeta(m+n),
\label{euler1}
\end{equation}
so that only the case $m\ge n$ needs to be considered.
\noindent
In particular, for $m=n$ we find
\begin{equation}
T(n,0,n) = \frac{1}{2} \zeta^{2}(n) - \frac{1}{2}\zeta(2n).
\label{euler2-0}
\end{equation}

At even weight $N$, the previous identities leave exactly $N/2-2$ Tornheim
sums of the type $T(m,0,n)$ unevaluated. In effect, the convergence
conditions require $n\ge 2$, so that all the sums with $m = N/2+1,
N/2+2,\ldots,N-2$ are undetermined.
\noindent
The remaining $N/2-2$ unevaluated Tornheim sums are not all independent,
since they satisfy linear relations obtained by applying
Huard's identity \eqref{huard-1} to some special cases with
known evaluations as, for instance,
\begin{align}
T(m,k,0) &= \zeta(m)\zeta(k), \\
T(m,k,1) &= (-1)^m \left\{ \sum_{i=2}^{m} (-1)^i \zeta(i) \zeta(N-i) + 
\right.\cr
&\qquad\qquad\qquad\left. \frac{1}{2} \sum_{i=2}^{N-2} \zeta(i) \zeta(N-i) -
\frac{1}{2}(N+1) \zeta(N) \right \}, \\
\intertext{or}
T(m,1,1) &= \frac{1}{2} \left( (N+1) \zeta(N) 
- \sum_{i=2}^{N-2} \zeta(i) \zeta(N-i) \right).
\end{align}
These results appear in \cite{tornheim1}. As usual, $N$ is 
the weight of the Tornheim sum in the left hand side. 

\medskip

On the other hand, Granville
\cite{granville1} showed that the multiple zeta values 
defined in (\ref{def-MZV}) satisfy 
\begin{equation}
\sum \zeta(p_{1}, p_{2}, \cdots, p_{g} ) = \zeta(N),
\end{equation}
\noindent
where the sum is over all elements of $\mathbb{Z}^{g}$ such that 
$p_{1} + p_{2} + \cdots + p_{g} = N$, with $p_{j} \geq 1$ and $p_{1} \geq 2$. 
In particular, when $g=2$ we obtain
\begin{equation}
\sum_{m+n = N} T(m,0,n) = \zeta(N),
\end{equation}
\noindent
where the sum is over pairs $(m,n)$ with $m \geq 1$ and $n \geq 2$.

\medskip

\noindent
{\bf Experimental observation}. Not all of the relations 
stated above are linearly independent.
However, it is possible to show that all Tornheim sums
in $\mathcal{Z}_{N}^{0}$ for weights $N=4$ and $N=6$ can be completely
evaluated in terms of zeta values (the details are provided at the 
end of this section). In the case of
even weight $N\ge 8$ the Tornheim sums 
can be expressed in terms of zeta
values and a reduced number of basis sums, which we choose as 
$T(N-2k,0,2k)$, with $k=1,\ldots,K$,
where $K$ is given by
\begin{equation}
K = \left\lfloor \frac{N-2}{6} \right\rfloor
\end{equation}
In particular, we need only one basis sum for weights 8, 10 and 12; two 
for weights 14, 16 and 18, and  three for weights 20, 22 and 24. The reader 
will find in \cite{borwein95} that $K$ given above is an upper bound for 
the number of Tornheim sums required. 

\medskip

\noindent
\begin{example}
Indeed, for weight $N=8$ we find that all sums in $\mathcal{Z}_{8}^{0}$
have either explicit evaluations or can be expressed 
in terms of just $T(6,0,2)$:
\begin{align*}
 T(0,0,8)&= \zeta (7)-\zeta (8), \\
 T(1,0,7)&= \tfrac{5}{4}\zeta (8)-\zeta (3) \zeta(5), \\
 T(2,0,6)&= \tfrac{2}{3}\zeta (8)-T(6,0,2),\\
 T(3,0,5)&= -\tfrac{187}{24}\zeta (8)+5 \zeta (3) \zeta(5)+\tfrac{5}{2} T(6,0,2),\\
 T(4,0,4)&= \tfrac{1}{12}\zeta (8) ,\\
 T(5,0,3)&= \tfrac{163}{24}\zeta (8)-4 \zeta (3) \zeta(5)-\tfrac{5}{2} T(6,0,2).
\end{align*}
Note that the identity \eqref{huard-7} follows directly from these expressions.\end{example}

\medskip

\begin{example}
For weight $N=14$ we can express all Tornheim sums as zeta values plus the 
sums $T(12,0,2)$ and $T(10,0,4)$: 
{\allowdisplaybreaks
\begin{equation*}
\begin{split}
 T(0,0,14)&= \zeta (13)-\zeta (14), \\
 T(1,0,13)&= \tfrac{11}{4} \zeta (14)-\zeta (3) \zeta (11)-\zeta (5) \zeta
   (9)-\tfrac{1}{2}\zeta (7)^2, \\
 T(2,0,12)&= \tfrac{271}{420} \zeta (14)-T(12,0,2), \\
 T(3,0,11)&= -\tfrac{35741}{840} \zeta (14)+11 \zeta
   (3) \zeta (11)+16 \zeta (5) \zeta (9)+9 \zeta (7)^2+\tfrac{11}{2} T(12,0,2),\\
 T(4,0,10)&= \tfrac{1}{12}\zeta (14)-T(10,0,4),\\ 
 T(5,0,9)&= \tfrac{40977}{112} \zeta (14)-\tfrac{165}{2} \zeta (3) \zeta (11)-147 \zeta (5) \zeta(9)-\tfrac{345}{4} \zeta (7)^2 \\
 &\qquad\qquad\qquad\qquad\qquad\qquad\qquad\qquad
  +\tfrac{9}{2} T(10,0,4)-\tfrac{165}{4} T(12,0,2),\\
 T(6,0,8)&= -\tfrac{20773}{35} \zeta (14)+132 \zeta(3) \zeta (11)+240 \zeta (5) \zeta (9)+141 \zeta (7)^2 \\
 &\qquad\qquad\qquad\qquad\qquad\qquad\qquad\qquad
  -6 T(10,0,4)+66 T(12,0,2),\\
 T(7,0,7)&= \tfrac{1}{2}\zeta (7)^2-\tfrac{1}{2}\zeta (14), \\
 T(8,0,6)&= \tfrac{16619}{28} \zeta (14) -132 \zeta (3)\zeta (11)-240 \zeta (5) \zeta (9)-141 \zeta (7)^2\\
 &\qquad\qquad\qquad\qquad\qquad\qquad\qquad\qquad
  +6 T(10,0,4)-66 T(12,0,2),\\
 T(9,0,5)&= -\tfrac{41089}{112} \zeta (14) +\tfrac{165}{2} \zeta (3) \zeta (11)+148\zeta (5) \zeta (9)+\tfrac{345}{4} \zeta(7)^2\\
 &\qquad\qquad\qquad\qquad\qquad\qquad\qquad\qquad
  -\tfrac{9}{2} T(10,0,4)+\tfrac{165}{4} T(12,0,2), \\
 T(11,0,3)&= \tfrac{34901}{840} \zeta (14)-10 \zeta(3) \zeta (11)-16 \zeta (5) \zeta (9)-9 \zeta (7)^2\\
 &\qquad\qquad\qquad\qquad\qquad\qquad\qquad\qquad
  -\tfrac{11}{2} T(12,0,2).
\end{split}
\end{equation*}
}
\end{example}

\medskip
We are now in a position to formulate a systematic and exhaustive
algorithm to evaluate all the Tornheim sums in $\mathcal{Z}_{N}$.
The reader is invited to download the Tornheim Mathematica 6.0 package
developed by the authors, available at
\verb+http://www.math.tulane.edu/~vhm/packages.html+.
Most of the calculations in this paper can be easily reproduced with the 
aid of this package.

\medskip
\noindent
{\bf Algorithm}. The 
process of determining the Tornheim sums $T(m,k,n)$ proceeds as
follows.   \\

\noindent
{\bf Step 1}. First catalogue all sums that have known
explicit evaluations. All of these cases appear in 
\cite{tornheim1}. In this category we find:
\begin{align}
\intertext{The sums with third entry $n = 0$:}
T(m,k,0) &= \sum_{r=1}^{\infty} \sum_{s=1}^{\infty} 
\frac{1}{r^{m} \, s^{k}} = \zeta(m) \zeta(k).
\label{T(m,k,0)}
\intertext{The sums with third entry $n=1$:}
\begin{split}
\label{T(m,k,1)}
T(m,k,1) &= (-1)^m \left\{ \sum_{i=2}^{m} (-1)^i \zeta(i) \zeta(N-i) + 
\right.\cr
&\left. \frac{1}{2} \sum_{i=2}^{N-2} \zeta(i) \zeta(N-i) -
\frac{1}{2}(N+1) \zeta(N) \right \},
\end{split}
\intertext{where $N = m+k+1$ is the weight.}
\intertext{The sums of the type}
T(1,1,n) & = (n+1) \zeta(n+2) - \sum_{i=2}^{n} 
\zeta(i) \zeta(n+2-i).  \label{tor-new1}
\intertext{The following sums in $\mathcal{Z}_{N}^{0}$:}
T(0,0,n) &= \zeta(n-1)-\zeta(n),
\label{torn-two-0-a}\\
T(1,0,n) &= \frac{1}{2} \left( n \zeta(n+1) - 
\sum_{i=2}^{n-1} \zeta(i) \zeta(n+1-i) \right), \label{tor-new3-2}
\intertext{and the symmetric sum,}
T(m,0,m) &= \frac{1}{2} \zeta^{2}(m) - \frac{1}{2}\zeta(2m), \qquad m\ge 2.
\label{euler2}
\end{align}
For weight $N=m+n$ odd, the sum $T(m,0,n)$ is given by
\begin{multline}\label{huard-2a}
T(m,0,n) =  (-1)^{m} \sum_{j=0}^{\lfloor{ \frac{n-1}{2} \rfloor} } 
\binom{N-2j-1}{m-1} \zeta(2j) \zeta(N-2j)  \\
 +  (-1)^{m} \sum_{j=0}^{\lfloor{ \frac{m}{2} \rfloor} } 
\binom{N-2j-1}{n-1} \zeta(2j) \zeta(N-2j) -\tfrac{1}{2}\zeta(N).
\end{multline}

\medskip

\noindent
{\bf Step 2}. If $k \neq 0$, use the reduction of Huard et al. given in 
(\ref{huard-1}) to reduce the $T(m,k,n)$ not covered by the previous step
to a finite sum of Tornheim sums in $\mathcal{Z}_{N}^{0}$: \\
\begin{equation}
T(m,k,n) = \sum_{i=1}^{m} \binom{m+k-i-1}{m-i} T(i,0,N-i) + 
\sum_{i=1}^{k} \binom{m+k-i-1}{k-i} T(i,0,N-i),
\end{equation}
\noindent
with $N = m+k+n$.

\medskip

\noindent
{\bf Step 3}. For $N=m+n \le 6$ even and $m, \, n \geq 2$, compute all the sums
$T(m,0,n)$ explicitly by solving the set of simultaneous equations 
obtained from: 
(a) applying Huard's reduction of the previous step to the identity \eqref{T(m,k,0)},
for all independent pairs $(m,k)$ with $m+k=N$; (b) Euler's identity \eqref{euler1};
and using the known explicit evaluations of the sums in $\mathcal{Z}_{N}^{0}$
given in Step 1.

\medskip

\noindent
{\bf Step 4}. For $N=m+n \ge 8$ even and $m, \, n \geq 2$, we write all the sums
$T(m,0,n)$ in terms of the irreducible basis for weight $N$,
\begin{equation}
\left\{T(N-2k,0,2k),\quad k=1,\ldots,\left\lfloor \frac{N-2}{6} \right\rfloor \right\}.
\end{equation}

\medskip

\noindent
{\bf Step 5}. The irreducible sums of the previous step are evaluated in 
terms of
the integrals $X_{k,l}$ using Theorem \ref{really-final}. 

\medskip

For a given {\em even} weight $N=m+k+n$,
the whole process gives $T(m,k,n)$ as a finite sum (with rational
coefficients) of the zeta values $\zeta(N)$ and $\zeta(N-1)$, products 
of two zeta values of the form $\zeta(j) \zeta(N-j)$ with $2 \leq j \leq N-2$
and, for $N\ge 8$, a finite number of integrals the type $Y_{2r,N-2r}^*$, where
\begin{equation}
Y_{2r,N-2r}^* := Y_{2r,N-2r}+(-1)^{\frac{N}{2}-1}\binom{N-2}{2r-1}\zeta(N-1)\log2\pi.
\end{equation}

\begin{Definition}
We say that $(m,k,n)$ is an {\em admissible triple} if $m \geq k$ and
$m,k,n$ satisfy the conditions $m+n \geq  2, \, k+n \geq 2$ and $N \geq 3$ for the convergence of $T(m,k,n)$.
\end{Definition}

We present now the results for
small weight $N = m+k+n$.  The cases of weight $3$ and $4$ are
straightforward, as all admissible triples corresponds to cases
where the Tornheim sum has an explicit formula. \\

\noindent
\begin{center}
{\bf Weight $3$ }
\end{center}

\medskip\noindent
The admissible triples are $(0,0,3), \, (1,0,2)$ and 
$(1,1,1)$. We obtain
\begin{align*}
T(0,0,3) &= \zeta(2) - \zeta(3),\cr
T(1,0,2) &= \zeta(3),\cr
T(1,1,1) &= 2 \zeta(3),
\end{align*}
directly from \eqref{torn-two-1}, \eqref{tor-new3-1} and \eqref{tor-new1}, respectively.
\noindent
The reader will find in \cite{borwein-bradley1} many proofs of
the last identity.  \\

\begin{center}
{\bf Weight $4$ }
\end{center}

\medskip\noindent
The six admissible triples are
\begin{equation}
(0,0,4),  \, (1,0,3), \, (1,1,2), \,  (2,0,2), \, 
(2,1,1) \text{ and }  (2,2,0). \nonumber
\end{equation}

\medskip

\noindent We obtain
\begin{align*}
T(0,0,4) &= \zeta(3) - \zeta(4) = \zeta(3) - \frac{\pi^4}{90},\cr
T(1,0,3) &= \frac{1}{2} \left(3 \zeta (4)-\zeta (2)^2\right)
= \frac{1}{4}\zeta(4) = \frac{\pi^{4}}{360},\cr
T(1,1,2) &= 3 \zeta (4)-\zeta (2)^2 = \frac{1}{2}\zeta(4) = \frac{\pi^{4}}{180},\cr
T(2,0,2) &= \frac{1}{2} \left(\zeta (2)^2-\zeta (4)\right)
= \frac{3}{4}\zeta(4) = \frac{\pi^{4}}{120},\cr
T(2,1,1) &= \frac{1}{2} \left(5 \zeta (4)-\zeta (2)^2\right)
= \frac{5}{4}\zeta(4) = \frac{\pi^{4}}{72},\cr
T(2,2,0) &= \zeta (2)^2 = \frac{5}{2}\zeta(4) = \frac{\pi^{4}}{36},
\end{align*}
directly from \eqref{torn-two-1}, \eqref{tor-new3-1}, \eqref{tor-new1}, \eqref{euler2},
\eqref{T(m,k,1)} and \eqref{T(m,k,0)}, respectively.

\medskip
\noindent
The methods developed here produce the result
\begin{equation}
T(2,0,2) = \frac{\pi^{4}}{45} + 2 \log(2 \pi) \zeta(3) - Y_{2,2}. 
\end{equation}
\noindent
It follows that 
\begin{equation}
Y_{2,2}  = \frac{\pi^{4}}{72} + 
2 \log(2 \pi) \zeta(3).
\end{equation}
\noindent
As a consequence of this, we obtain the definite integral 
\begin{multline*}
\ione \left( 2 \pi^{2} B_{2}(q) \Cl_{1}(2 \pi q) 
- 2 \pi B_{1}(q) \Cl_{2}(2 \pi q) 
-  B_{0}(q) \Cl_{3}(2 \pi q)  \right) 
\log \Gamma(q) \, dq   \\ = 
\frac{\pi^{4}}{144} + 
\log(2 \pi) \zeta(3),
\end{multline*}
\noindent
where
\begin{equation*}
B_{0}(q) = 1, \;\; B_{1}(q) = q - 1/2, \;\; B_{2}(q) = q^{2}-q + 1/6,
\end{equation*}
and 
\begin{equation*}
\Cl_{1}(2 \pi q ) = \sum_{k=1}^{\infty} \frac{\cos 2 \pi k q }{k}, \; \;
\Cl_{2}(2 \pi q ) = \sum_{k=1}^{\infty} \frac{\sin 2 \pi k q }{k^{2}}, \;\;
\Cl_{3}(2 \pi q ) = \sum_{k=1}^{\infty} \frac{\cos 2 \pi k q }{k^{3}}.
\end{equation*}
Considering that from \eqref{X0-value} we also know that
\begin{equation}
\ione \log\Gamma(q)\Cl_{3}(2 \pi q )\,dq=\frac{1}{4}\zeta(4)=\frac{\pi^4}{360},
\end{equation}
we also find
\begin{multline}
\ione \left( 2 \pi^{2} B_{2}(q) \Cl_{1}(2 \pi q) 
- 2 \pi B_{1}(q) \Cl_{2}(2 \pi q) 
\right) 
\log \Gamma(q) \, dq   \\ = 
\frac{7\pi^{4}}{720} + 
\log(2 \pi) \zeta(3).
\end{multline}
\noindent
{\em Not your average integral}.  \\

\medskip

We state next the values of the Tornheim sums $T(m,k,n)$ of
even weight $N\ge 6$. The 
formulas are the direct output of the algorithms presented here, the only 
reductions used are those given at the beginning of this section. The reader
will observe that the final 
expresions contain a single even value of the Riemann zeta function. This is 
artificial. For instance, the value 
\begin{equation*}
T(4,2,0) = \tfrac{7}{4} \zeta(6),
\end{equation*}
\noindent
should be written as 
\begin{equation*}
T(4,2,0) = \zeta(4) \zeta(2),
\end{equation*}
\noindent
as in (\ref{T(m,k,0)}). This latter representation would be more helpful in 
the search for a closed-form expression for the Tornheim sums. However, at 
this point, we have decided to minimize the number of zeta values appearing in
the formulas.

\begin{Note}
In the examples that follow, we do not list the Tornheim sum $T(0,0,N)$. This 
is the only sum that explicitly contains the term $\zeta(N-1)$. 
\end{Note}

\bigskip

\begin{center}
{\bf Weight $6$ }
\end{center}

\medskip

In this example we give complete details, which will 
be omitted for higher weights.
The admissible triples are now
$(0,0,6)$, $(1,0,5)$, $(1,1,4)$, $(2,0,4)$, $(2,1,3)$, $(2,2,2)$, $(3,0,3)$, 
$(3,1,2)$, $(3,2,1)$, $(3,3,0)$, $(4,0,2)$, $(4,1,1)$ and $(4,2,0)$. \\

A direct application of the identities
and explicit formulas already discussed give the following evaluations
(in the formulas below we have
replaced the product $\zeta(2)\zeta(4)$ by $\frac{7}{4}\zeta(6)$):
 
\begin{align*}
T(1,0,5) & = \tfrac{3}{4} \zeta(6) - \tfrac{1}{2} \zeta^{2}(3), \quad &
T(1,1,4) & = \tfrac{3}{2} \zeta(6) - \zeta^{2}(3),  \\
T(3,0,3) & = - \tfrac{1}{2} \zeta(6) + \tfrac{1}{2} \zeta^{2}(3), \quad &
T(3,3,0) & = \zeta^{2}(3), \\
T(4,1,1) & = \tfrac{7}{4}\zeta(6) - \tfrac{1}{2} \zeta^{2}(3), \quad &
T(4,2,0) & = \tfrac{7}{4}\zeta(6).  
\end{align*}

\noindent
Huard's expansion \eqref{huard-1} gives
\begin{align*}
T(2,1,3) & = 2 T(1,0,5)+T(2,0,4),  \\
T(2,2,2) & = 4 T(1,0,5)+2 T(2,0,4),  \\
T(3,1,2) & = 2 T(1,0,5)+T(2,0,4)+T(3,0,3), \\
T(3,2,1) & = 6 T(1,0,5)+3 T(2,0,4)+T(3,0,3).
\end{align*}

\noindent
Euler's identity \eqref{euler1},
\begin{equation*}
T(2,0,4) + T(4,0,2) = \zeta(2) \zeta(4) - \zeta(6)  = \tfrac{3}{4} \zeta(6), 
\end{equation*}
\noindent
allows us to express $T(2,0,4)$ in terms of $T(4,0,2)$. This, together
with the explicit evaluations of $T(1,0,5)$ and $T(3,0,3)$, yields
\begin{align*}
T(2,0,4) & = \tfrac{3}{4} \zeta(6) - T(4,0,2),\\
T(2,1,3) & = \tfrac{9}{4} \zeta(6) - \zeta^{2}(3) - T(4,0,2),  \\
T(2,2,2) & = \tfrac{9}{2} \zeta(6) - 2\zeta^{2}(3) - 2T(4,0,2),  \\
T(3,1,2) & = \tfrac{5}{2} \zeta(6) - \tfrac{1}{2}\zeta^{2}(3) - 2T(4,0,2),  \\
T(3,2,1) & = 7 \zeta(6) - \tfrac{5}{2}\zeta^{2}(3) - 4T(4,0,2). 
\end{align*}

\noindent
Finally, Huard's expansion \eqref{huard-1} applied to the Tornheim
sum $T(4,2,0)=\frac{7}{4}\zeta(6)$ produces another identity, 
\begin{equation*}
8 T(1,0,5)+4 T(2,0,4)+2 T(3,0,3)+T(4,0,2) = \tfrac{7}{4}\zeta(6),
\end{equation*}
which permits
to solve for the as yet undetermined value of $T(4,0,2)$:
\begin{equation}
T(4,0,2) = \tfrac{25}{12} \zeta(6) - \zeta^{2}(3).
\end{equation}

\medskip

\noindent
This last result produces the explicit evaluation of all Tornheim sums of 
weight $6$:

\begin{align}
T(1,0,5) &  =  \tfrac{3}{4} \zeta(6) -\tfrac{1}{2} \zeta^{2}(3), \quad &
T(1,1,4) & =  \tfrac{3}{2} \zeta(6) - \zeta^{2}(3), \nonumber  \\
T(2,0,4) & =  -\tfrac{4}{3} \zeta(6) +  \zeta^{2}(3), 
\quad &
T(2,1,3) & =   \tfrac{1}{6} \zeta(6),
 \nonumber \\
T(2,2,2) & =   \tfrac{1}{3} \zeta(6),
\quad &
T(3,0,3) & =  - \tfrac{1}{2} \zeta(6) + \tfrac{1}{2} \zeta^{2}(3),  \nonumber \\
T(3,1,2) & =  -\tfrac{1}{3} \zeta(6) + \tfrac{1}{2} \zeta^{2}(3),  
  \quad &
T(3,2,1) & =  \tfrac{1}{2} \zeta^{2}(3),
\nonumber \\
T(3,3,0) & = \zeta^{2}(3),  
 \quad &
T(4,0,2) & =  \tfrac{25}{12} \zeta(6) - \zeta^{2}(3),  
\nonumber \\
T(4,1,1) & =  \tfrac{7}{4} \zeta(6) - \tfrac{1}{2} \zeta^{2}(3),
\quad &
T(4,2,0) & =  \tfrac{7}{4} \zeta(6). \nonumber 
\end{align}

\medskip

\noindent
All the  Tornheim sums of weight $6$ have been evaluated. 

\medskip

\begin{Note}
The problem of whether these sums are completely reduced is now equivalent to 
whether $\zeta^{2}(3)$ and $\zeta(6) = \pi^{6}/945$ are 
rationally related. It is conjectured
that $\zeta(3)/\pi^{3}$ is a transcendental number\footnote{The authors 
wish to thank W. Zudilin for this information.}.
\end{Note}

\bigskip

\begin{center}
{\bf Weight $8$ }
\end{center}

\medskip

The reduction algorithm described above begins by applying Huard's reduction 
procedure to express every sum $T(m,k,n)$, with $N = m+k+n=8$, in terms of 
the $N-2 = 6$ sums 
\begin{equation}
\{ T(1,0,7), \, T(2,0,6), \, T(3,0,5), \, T(4,0,4), \, T(5,0,3), \, T(6,0,2) 
\, \} 
\end{equation}
\noindent
with $k=0$. For example, 
\begin{equation}
T(5,2,1) = 10T(1,0,7) + 5T(2,0,6) + 3T(3,0,5) + 2T(4,0,4) + T(5,0,3). 
\end{equation}
\noindent
We use the shorthand notation $T(5,2,1) = [ 10, 5, 3, 2, 1, 0]$. The table 
below gives all the coefficients corresponding to the $15$ Tornheim sums of 
weight $8$ with $k \neq 0$. 
{\allowdisplaybreaks
\begin{align*}
T(1,1,6) & =  \left[ 2, 0, 0, 0, 0, 0 \right], &
T(2,1,5) & =  \left[ 2, 1, 0, 0, 0, 0 \right], \\ 
T(2,2,4) & =  \left[ 4, 2, 0, 0, 0, 0 \right], &
T(3,1,4) & =  \left[ 2, 1, 1, 0, 0, 0 \right], \\
T(3,2,3) & =  \left[ 6, 3, 1, 0, 0, 0 \right], & 
T(3,3,2) & =  \left[ 12, 6, 2, 0, 0, 0 \right], \\
T(4,1,3) & =  \left[ 2, 1, 1, 1, 0, 0 \right], &
T(4,2,2) & =  \left[ 8, 4, 2, 1, 0, 0 \right], \\
T(4,3,1) & =  \left[ 20, 10, 4, 1, 0, 0 \right], &
T(4,4,0) & =  \left[ 40, 20, 8, 2, 0, 0 \right],\\  
T(5,1,2) & =  \left[ 2, 1, 1, 1, 1, 0 \right], & 
T(5,2,1) & =  \left[ 10, 5, 3, 2, 1, 0 \right], \\
T(5,3,0) & =  \left[ 30, 15, 7, 3, 1, 0 \right], & 
T(6,1,1) & =  \left[ 2, 1, 1, 1, 1, 1 \right], \\
T(6,2,0) & =  \left[ 12, 6, 4, 3, 2, 1 \right].\\
\stoli
\end{align*}
}

\medskip

Therefore every Tornheim sum $T(m,k,n)$ has been expressed in terms of the 
set 
\begin{equation}
\{ T(i,0,N-i): \, 1 \leq i \leq N-2 \}.
\end{equation}
\noindent
The value $T(1,0,N-1)$ is given in (\ref{tor-new3-1}) and $T(N/2,0,N/2)$ 
appears in (\ref{euler2-0}). Moreover, Euler's relation (\ref{euler1}) 
reduces the number of unknown Tornheim sums to $N/2 - 2$. In the case $N=8$
the two unknowns are $T(6,0,2)$ and $T(5,0,3)$. Among the $15$ sums discussed 
above, there are three with last index equal to $0$, namely $T(6,2,0), \, 
T(5,3,0)$ and $T(4,4,0)$. Each one of them produces an equation in the 
unknowns $T(6,0,2)$ and $T(5,0,3)$ coming from the evaluation 
\begin{equation}
T(m,k,0) = \zeta(m) \zeta(k).
\end{equation}
\noindent
For instance, the case $T(4,4,0)$ gives 
\begin{equation}
5T(6,0,2) + 2T(5,0,3) = \frac{163}{12} \zeta(8) - 8 \zeta(3) \zeta(5). 
\end{equation}
\noindent
This is the same relation among these sums obtained by Huard in 
(\ref{huard-7}). Unfortunately, the sums 
$T(6,2,0)$ and $T(5,3,0)$ yield the same relation, so 
we are unable to produce an analytic expression for all Tornheim sums of 
weight $8$, free of an unevaluated integral. \\

We conclude that every Tornheim sum 
of weight $8$ is a rational linear combination of 
the set 
\begin{equation}
G_{8} := \{ \zeta(8), \, \zeta(3) \zeta(5), \, T(6,0,2) \}. 
\end{equation}
\noindent
The table shows the corresponding coefficients: \\
\begin{align*}
T(1,0,7) & =  \left[ \tfrac{5}{4}, -1, 0 \right] &
T(1,1,6) & =  \left[ \tfrac{5}{2}, -2, 0 \right]  \nonumber \\
T(2,0,6) & =  \left[ \tfrac{2}{3}, 0, -1  \right]  &
T(2,1,5) & =  \left[ \tfrac{19}{6}, -2, -1  \right]
  \nonumber \\
T(2,2,4) & =  \left[ \tfrac{19}{3}, -4, -2   \right] &
T(3,0,5) & =  \left[ -\tfrac{187}{24}, 5, \tfrac{5}{2}  \right]
  \nonumber \\
T(3,1,4) & =  \left[ -\tfrac{37}{8}, 3, \tfrac{3}{2}  \right] &
T(3,2,3) & =  \left[ \tfrac{41}{24}, -1, -\tfrac{1}{2} \right]
  \nonumber \\
T(3,3,2) & =  \left[ \tfrac{41}{12}, -2, -1  \right] &
T(4,0,4) & =  \left[ \tfrac{1}{12}, 0, 0 \right] \nonumber \\
T(4,1,3) & =  \left[ -\tfrac{109}{24}, 3, \tfrac{3}{2}  \right] &
T(4,2,2) & =  \left[ -\tfrac{17}{6}, 2, 1   \right]
  \nonumber \\
T(4,3,1) & =  \left[ \tfrac{7}{12}, 0, 0 \right] &
T(4,4,0) & =  \left[ \tfrac{7}{6}, 0, 0 \right] \nonumber \\
T(5,0,3) & =  \left[ \tfrac{163}{24}, -4, -\tfrac{5}{2}   \right] &
T(5,1,2) & =  \left[ \tfrac{9}{4}, -1, -1   \right]
  \nonumber \\
T(5,2,1) & =  \left[ -\tfrac{7}{12}, 1, 0  \right] &
T(5,3,0) & =  \left[ 0, 1, 0   \right]
  \nonumber \\
T(6,0,2) & =  \left[ 0, 0, 1  \right] &
T(6,1,1) & =  \left[ \tfrac{9}{4}, -1, 0    \right] \nonumber \\
T(6,2,0) & =  \left[ \tfrac{5}{3}, 0, 0    \right]. & & 
  \nonumber 
\end{align*}
\medskip

The results derived in this paper give us the remaining unevaluated
Tornheim sum $T(6,0,2)$ in terms of the integral $Y_{2,6}^{*}$ as
\begin{equation}
T(6,0,2) = \tfrac{7}{6} \zeta(8) - 6\zeta(3)\zeta(5) - Y_{2,6}^*.
\label{nice-1a}
\end{equation}

Therefore, the generating set for Tornheim sums of weight 8 can also
be taken as
\begin{equation}
G_{8}^{*} := \{ \zeta(8), \, \zeta(3) \zeta(5), \, Y_{2,6}^* \}. 
\end{equation}

\bigskip

\begin{center}
{\bf Weight $10$ }
\end{center}

For weight 10, the algorithm follows step by step the previous case.
We find that all Tornheim sums in $\mathcal{Z}_{10}^{0}$ are generated
by the set
\begin{equation}
G_{10}:= \{ \zeta(10), \, \zeta^{2}(5), \, 
\zeta(3) \zeta(7), T(8,0,2) \}. 
\end{equation}
For example, 
\begin{equation*}
T(3,2,5) = -\tfrac{103}{40}\zeta(10) + \zeta^{2}(5) + \zeta(3)\zeta(7)
+\tfrac{1}{2} T(8,0,2).
\end{equation*}
\noindent
According to Theorem \ref{really-final}, the remaining unevaluated Tornheim
sum $T(8,0,2)$ can be expressed in terms of the integral $Y_{2,8}^{*}$ as
\begin{equation}
T(8,0,2) = \tfrac{23 }{20}\zeta (10)
   -8 \zeta (3) \zeta (7)-4 \zeta (5)^2 + Y_{2,8}^*.
\label{nice-2a}
\end{equation}
\noindent
In particular, we may also use the generating set 
\begin{equation}
G_{10}^{*}:= \{ \zeta(10), \, \zeta^{2}(5), \, 
\zeta(3) \zeta(7), Y_{2,8}^{*} \}. 
\end{equation}

\bigskip

\begin{center}
{\bf Generating set for Tornheim sums of even weight}
\end{center}

The same algorithm described above can be used to produce a generating set 
for the Tornheim sums of even weight $N$.
Except for $T(0,0,N)=\zeta(N-1)-\zeta(N)$, every such sum
is a rational linear combination of the elements of the set
\begin{equation} 
\left\{ \zeta(N), \, \zeta(j) \zeta(N-j): \, j \text{ odd }, 
\, 3 \leq j \leq 2\lfloor\frac{N-1}{4}\rfloor + 1 \, \right\}
\end{equation}
\noindent
and, for $N\ge 8$, one must also include the collection of integrals
\begin{equation}
\left\{ Y_{2r,N-2r}^*: \, 1 \leq r \leq 
\left\lfloor \frac{N-2}{6} \right\rfloor \right\},
\end{equation}
where
where $Y_{2r,N-2r}^{*}$ is defined in (\ref{def-Y-new}). 

\medskip 

The smallest weight for which one requires two basis Tornheim sums is $14$. 
In this case the generating set is
\begin{equation}
G_{14}:= \{ \zeta(14), \, \zeta^{2}(7), \, 
\zeta(5) \zeta(9), \zeta(3) \zeta(11), T(12,0,2), T(10,0,4) \}. 
\end{equation}
The evaluation of $T(12,0,2)$ and $T(10,0,4)$ according to Theorem
\ref{really-final} gives
\begin{align*}
T(12,0,2) &= \tfrac{481 }{420}\zeta (14)-12 \zeta (3) \zeta (11)-12 \zeta (5) \zeta
   (9)-6 \zeta (7)^2 + Y_{2,12}^*,\cr
T(10,0,4) &= \tfrac{7 }{12}\zeta (14)-120 \zeta (3) \zeta (11)-60 \zeta (5) \zeta
   (9)-20 \zeta (7)^2 + Y_{4,10}^*,
\end{align*}
with
\begin{align*}
Y_{2,12}^* = Y_{2,12} + 12 \zeta(13) \log 2 \pi,\cr
Y_{4,10}^* = Y_{4,10} + 220 \zeta(13) \log 2 \pi.
\end{align*}
\noindent
In particular, we may also use the generating set 
\begin{equation}
G_{14}^{*}:= \{ \zeta(14), \, \zeta^{2}(7), \, 
\zeta(5) \zeta(9), \zeta(3) \zeta(11), Y_{2,12}^{*}, Y_{4,10}^{*} \}. 
\end{equation}


\begin{Note}
In \cite{borwein95} the reader will find the sums 
\begin{equation}
\sigma_{h}(s,t) := \sum_{n=1}^{\infty} \sum_{k=1}^{n-1} 
\frac{1}{k^{s}} \frac{1}{n^{t}},
\end{equation}
\noindent
that can be expressed as 
\begin{equation}
\sigma_{h}(s,t) = \zeta(s) \zeta(t) - \zeta(s+t) - T(t,0,s). 
\end{equation}
\noindent
The authors analyze  a system of equations for the sums $\sigma_{h}(s,t)$ 
with the weight $w:= s+t$ fixed. For $w$ odd, the system has full rank and 
they obtain Huard's expression for the Tornheim sums. In the case 
$w = 2n$ even, they establish that the dimension of the null space is 
$\lfloor(n-1)/3 \rfloor$, so every sum can be expressed in terms of 
this number of basis elements. In our 
case, $N = n/2$, thus the expected number of 
generators for all Tornheim sums of weight $N$ is at most $\lfloor(N-2)/6 
\rfloor$. The  fact that the set
\begin{equation}
\left\{ \, T(N-2r,0,2r): \, 1 \leq r \leq 
\lfloor (N-2)/6 \rfloor \right\}
\end{equation}
\noindent
can be used to generate all Tornheim sums (aside from the usual product of 
zeta values) will follow from a careful analysis of the identities generated 
by the relations $T(m,n,0) = \zeta(m) \zeta(n)$. We leave the details for the 
ambitious reader. The fact that these sums 
are linearly independent is beyond our reach. 
\end{Note}

\bigskip

\section{Conclusions} \label{S:conclusions}

We have discussed an algorithm that evaluates all the Tornheim sums 
\begin{equation}
T(m,k,n)  : =  \sum_{r=1}^{\infty} \sum_{s=1}^{\infty} \frac{1}{r^{m}
\, s^{k} \, (r+s)^{n} },
\label{seriestz-0}
\end{equation}
\noindent
of a given even weight $N := m+k+n$, as rational linear combinations of
the value $\zeta(N)$,  
products of zeta values $\zeta(j) \zeta(N-j)$ with $j$ odd in the range 
$3 \leq j \leq 2\lfloor\frac{N-1}{4}\rfloor + 1$ and the integrals
\begin{equation}
\left\{ Y_{2r,N-2r}^*: \, 1 \leq r \leq 
\left\lfloor \frac{N-2}{6} \right\rfloor \right\},
\end{equation}
\noindent
where 
\begin{multline}
Y_{m,n}^{*} := \frac{2(2\pi)^{m+n-2}}{m!(n-2)!} 
\sum_{j=0}^{m} (-1)^{j} \binom{m}{j} X_{j,m+n-2-j} \label{def-Y-0} \cr
+ (-1)^{N/2-1} \binom{N-2}{m-1} \zeta(N-1) \log 2 \pi,
\end{multline}
\noindent
and the integral $X_{k,l}$ is defined by 
\begin{equation}
X_{k,l} := (-1)^{\lfloor{l/2 \rfloor}} 
\frac{l!}{(2\pi)^l}\ione  \log \Gamma(q) B_{k}(q) 
\Cl_{l+1} (2 \pi q)  \, dq. 
\label{def-X-0}
\end{equation}
\noindent
Here $B_{k}$ is the Bernoulli polynomial and $\Cl$ is the Clausen function.  \\

All the Tornheim sums of a given even weight $N$ can be expressed in terms of
zeta values and a reduced number of basis sums of the type $T(N-2r,0,2r)$, with
$r=1,\ldots,\left\lfloor \frac{N-2}{6} \right\rfloor$. These sums, in turn,
can themselves be expressed in terms of zeta values and the integral
$Y_{2r,N-2r}^*$, according to Theorem \ref{really-final}:
\begin{multline}
T(N-2r,0,2r) = (-1)^{N/2-1} Y_{2r,N-2r}^* + \zeta(2r)\zeta(N-2r) - \frac{1}{2}\zeta(N)\cr
-\sum _{j=1}^{N/2-2} \binom{N-2-2j}{2r-1}\zeta (2j+1) \zeta (N-1-2j).
\end{multline}

\medskip

Our results may perhaps be used to develop fast numerical codes to compute
even weight Tornheim sums to high accuracy.
Since the whole family of Tornheim sums of a given weight can be expressed
in terms of zeta values and a small basis of Tornheim sums, it is enough to
compute the basis sums to the required accuracy. This will involve the 
numerical calculation of the $Y$ integrals. 

For example, of the 46 admissible triples at weight $N=12$, 30 give Tornheim
sums that depend on the value of the single $T(10,0,2)$. This sum is given by
\begin{equation}
T(10,0,2) = - Y_{2,10}^{*} - 10 \zeta(5) \zeta(7) - 
10 \zeta(3) \zeta(9) + \tfrac{792}{691} \zeta(12). 
\end{equation}
\noindent
A 30-digit precision
Mathematica calculation of the integral $Y_{2,10}^{*}$ gives 
\begin{equation}
T(10,0,2) = 0.645\,324\,784\,017\,496\,594\,071\,783\,081\,476\ldots.
\end{equation}

\medskip

\no
{\bf Acknowledgments}. The first author would like to thank the
Department of Mathematics at Tulane University for its hospitality.
The second author acknowledges the partial support of NSF-DMS 00409968. The
authors wish to thank D. Broadhurst for comments on an earlier version 
of this paper. 

\bigskip

\end{document}